\newtheorem{remark}{Remark}[section]
\newcommand{\NN}{\mathbb{N}}
\newcommand{\RR}{\mathbb{R}}
\newcommand{\PP}{\mathbb{P}}
\newcommand{\EE}{\mathbb{E}}
\newcommand{\CC}{\mathbb{C}}
\newcommand{\T}{\mathbb{T}}
\newcommand{\m}{{\,\mathfrak{m}}}
\newcommand{\M}{{\,\mathfrak{M}}}
\newcommand{\FF}{\mathbb{F}}
\newcommand{\HH}{\mathbb{H}}
\newcommand{\LL}{{\cal L}}
\newcommand{\1}{{\bf 1}}
\newcommand{\ind}{\mathbbm{1}}
\newcommand{\q}{q}
\def\be{\begin{eqnarray}}
\def\ee{\end{eqnarray}}
\def\ben{\begin{eqnarray*}}
\def\een{\end{eqnarray*}}
\numberwithin{equation}{section}
\def\bi{\bigskip \noindent}
\def\me{\medskip \noindent}
\newtheorem{proposition}{Proposition}[section]
\newtheorem{lemma}[proposition]{Lemma}
\newtheorem{theorem}[proposition]{Theorem}
\newtheorem{corollary}[proposition]{Corollary}
\begin{document}

\date{\today\\fichier: \jobname}

\title{\textbf{Diffusions from Infinity}}

\author{Vincent Bansaye\thanks{CMAP, Ecole Polytechnique, CNRS, route de
   Saclay, 91128 Palaiseau Cedex-France; vincent.bansaye@polytechnique.edu},
    Pierre Collet\thanks{CPHT, Ecole Polytechnique, CNRS, route de
   Saclay, 91128 Palaiseau Cedex-France; pierre.collet@cpht.polytechnique.fr},  Servet Martinez\thanks{CMM-DIM;  Universidad de Chile; UMI-CNRS 2807; 
Casilla 170-3 Correo 3 Santiago; Chile; smartine@dim.uchile.cl},\\ 
Sylvie M\'el\'eard\thanks{CMAP, Ecole Polytechnique, CNRS, route de
Saclay, 91128 Palaiseau Cedex-France; sylvie.meleard@polytechnique.edu}, Jaime San Martin\thanks{CMM-DIM;  Universidad de Chile; UMI-CNRS 2807; 
Casilla 170-3 Correo 3 Santiago; Chile; jsanmart@dim.uchile.cl}}

\maketitle

\begin{abstract} In this paper we consider diffusions on the half line
  $(0,\infty)$ such that the
expectation of the arrival time at the origin is uniformly
bounded in the initial point. This implies that there is a well
defined diffusion process starting from infinity, which takes finite values at positive times. We 
study the behaviour of  hitting times of large barriers and in a dual way, the behaviour of  the process starting at infinity
for small time. In particular we prove that the process coming down from infinity is in
small time governed by  a specific deterministic function. 
Suitably normalized fluctuations of the hitting times are asymptotically
Gaussian.  We also derive the tail of the distribution of the
hitting time of the origin and a Yaglom limit for the diffusion starting from infinity. 
We finally prove that the distribution of this process killed at the origin is absolutely continuous with respect
to the speed measure. The density is expressed in terms of the
eigenvalues and eigenfunctions of the generator of the killed diffusion.
\end{abstract}

\smallskip
\noindent{\bf AMS subject classification:} 60J60, 60F05

\noindent{\bf Keywords:} Diffusion, descent from infinity, entrance boundary,
hitting times, eigenfunctions, quasi-stationary distributions, Central Limit Theorem.

\section{Introduction and main results}
In this paper we are interested in the descent from infinity for continuous diffusion processes without explosion.
This article is also a refinement about the study of quasi-stationary distributions (q.s.d.) when $\infty$ is an entrance boundary.
Although, there is a large literature of q.s.d. for one dimensional diffusions killed at $0$, there are not many
results on the behaviour of these processes near infinity and  the main
works we use in this article can be found in  \cite{Cattiaux}, \cite{S_E(2007)}, \cite{Littin} and \cite{K_S(2012)}.
On the other hand, this article was inspired by the results in \cite{B_M_M} in the context of birth-and-death
processes and is linked to the approximation by flows coming from infinity in \cite{B}.

\medskip

For convenience, we consider diffusion processes which are stopped at a regular point chosen to be the origin. 
More precisely we consider the flow $X$ of diffusions on $\RR^+$ stopped at $0$ satisfying for any $x\in \mathbb{R}_{+}$
$$
dX^x_t=dB_t-\q(X_t^x) dt\ ,\  X^x_0=x,
$$
where $B$ is a standard B.M. and $\q$ is assumed to be $\mathcal{C}^1([0,\infty))$. 
In particular $0$ is a regular boundary point, as
well as  any $z\in \RR^+$, when considering the diffusion on $[z,\infty)$. In our exposition $0$ will play no fundamental role
and most of the results only depend on the behaviour of $q$ near infinity.
Without loss of generality
we assume that $B$ is the coordinate process in the canonical space $(\mathcal{C}([0,\infty)),\mathscr{F},\FF,\PP)$, where
$\PP$ is the Wiener measure. We also denote by $\EE$ the associated expectation. Similarly, we denote by
$\PP_x$ the distribution of the process $X^x$ and $\EE_x$ its associated expectation.

\smallskip
The hitting time of a point $z$ by the process $X^x$ is denoted by  $\T_z^{(x)}=\inf\{t\ge 0: X^x_t= z\}$. 
To avoid overly burdensome notation, when the initial condition $x$ of the diffusion $X$ is clear, we use $\T_z=\T^{(x)}_z$. This will be
of particular use when computing moments of $\T^{(x)}_z$.

The scale function $\Lambda$  of $X$ is given by 
$$
\Lambda(z)=\int_0^z e^{\gamma(y)}\, dy, \hbox{ where }
\gamma(y)=2\int_0^y \q(\xi) \, d\xi.
$$
Throughout the paper  we shall assume the following main hypothesis

\smallskip

$$  \HH_1: \qquad \qquad  \qquad \qquad   
%$: The following integral is finite 
%\be\label{H1}
\int_0^\infty e^{\gamma(y)} \int_y^\infty e^{-\gamma(z)}\, dz\,dy<\infty.  \qquad \qquad  \qquad \qquad  $$
%\ee
We also introduce the associated Lyapunov function
\be
 \label{lyapounov}
  \m(z):=2\int_z^\infty e^{\gamma(y)} \int_y^\infty e^{-\gamma(\xi)} d\xi\,dy\,,
\ee
which is positive and decreasing and it will play a major role in what follows.

\me The speed measure $\mu$ is given by  $d\mu(y)=2e^{-\gamma(y)} dy$.  
By $\HH_1$,  $\int_y^\infty e^{-\gamma(z)}\, dz<\infty$ for $dy$-a.e., which implies that 
$\int_y^\infty e^{-\gamma(z)}\, dz<\infty$ for all $y>0$. Therefore 
 $\mu$ is a finite measure. 
Moreover,  the following  inequality
$$
(x-1)^2=\left(\int_1^x e^{ \gamma(y)/2}e^{- \gamma(y)/2}\, dy\right)^2\le \int_1^x e^{\gamma(y)}\, dy \int_1^x e^{-\gamma(y)}\, dy
$$   leads  immediately to $\Lambda(\infty)=\infty$. That implies (see
\cite{IW}) that the process  does not explode and attains $0$ almost surely.

\bi
The next result gives properties equivalent to  $\HH_1$ and it will be proven in Section 2. 
\smallskip

\begin{proposition}
\label{pro:0}
Assume that $\q$ is $\mathcal{C}^1([0,\infty))$, then the following are equi\-valent

\noindent$(i)$ \hspace{0.15cm} $\HH_1$ holds,

\noindent$(ii)$ \hspace{0.05cm} $\sup\limits_{x\ge 0} \EE(\T_0^{(x)})<\infty$,

\noindent$(iii)$ there exists $a>0$ such that  $\sup\limits_{x\ge 0} \EE(e^{\lambda \T_0^{(x)}})<\infty$, for any $\lambda\le a$.

\noindent$(iv)$ $\;\infty$ is an entrance boundary, that is, there exists $y\ge 0, t>0$ such that
$$
\lim\limits_{x\to \infty} \PP(\T_y^{(x)}<t)>0.
$$
\end{proposition}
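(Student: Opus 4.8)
The plan is to prove the cycle $(i)\Rightarrow(ii)\Rightarrow(iv)\Rightarrow(i)$ together with $(ii)\Leftrightarrow(iii)$; throughout, write $L=\tfrac12\tfrac{d^2}{dx^2}-\q\tfrac{d}{dx}$ for the generator of $X$ and $\T_n$ for the hitting time of the level $n$. For $(i)\Rightarrow(ii)$ I would use the Lyapunov function $\m$ of \eqref{lyapounov} as a test function. Since $\gamma'=2\q$ and $\int_z^\infty e^{-\gamma}<\infty$ for every $z$ under $\HH_1$, a direct differentiation shows that $\m\in\mathcal{C}^2([0,\infty))$ is nonnegative and nonincreasing and satisfies $\tfrac12\m''-\q\,\m'\equiv 1$, i.e. $L\m=1$. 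Applying Dynkin's formula to $\m(X^x_{t\wedge\T_0\wedge\T_n})$ (the stochastic integral is a true martingale because $\m'$ is bounded on $[0,n]$ and the time horizon is bounded) gives $\EE_x[t\wedge\T_0\wedge\T_n]=\EE_x[\m(X^x_{t\wedge\T_0\wedge\T_n})]-\m(x)\le\m(0)-\m(x)\le\m(0)$. As $\Lambda(\infty)=\infty$ there is no explosion, so $\T_n\uparrow\infty$; letting $n\to\infty$ and then $t\to\infty$ and invoking monotone convergence yields $\EE_x(\T_0)\le\m(0)$ for every $x\ge0$, which under $\HH_1$ gives $(ii)$ (and, once the cycle is closed, the identity $\sup_x\EE_x(\T_0)=\m(0)$).

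The equivalence $(ii)\Leftrightarrow(iii)$ and the implication $(ii)\Rightarrow(iv)$ are soft. That $(iii)\Rightarrow(ii)$ follows from $e^{\lambda t}\ge\lambda t$. For $(ii)\Rightarrow(iii)$, set $M:=\sup_x\EE_x(\T_0)<\infty$; Markov's inequality gives $\PP_w(\T_0>2M)\le\tfrac12$ for every $w>0$, and iterating the strong Markov property at the times $2M,4M,\dots$ (whenever $\T_0$ has not yet occurred the process sits at a positive point) yields $\PP_x(\T_0>2kM)\le 2^{-k}$ uniformly in $x$; summing $\EE_x[e^{\lambda\T_0}]$ over the blocks $\{2kM<\T_0\le 2(k+1)M\}$ gives $\EE_x[e^{\lambda\T_0}]\le e^{2\lambda M}\sum_{k\ge0}(e^{2\lambda M}/2)^k$, finite and bounded uniformly in $x$ as soon as $e^{2\lambda M}<2$, so $(iii)$ holds with any $a<\tfrac{\ln 2}{2M}$. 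For $(ii)\Rightarrow(iv)$, Markov's inequality gives $\PP_x(\T_0<2M)\ge\tfrac12$ for all $x>0$, while $x\mapsto\PP_x(\T_0<t)$ is nonincreasing on $(0,\infty)$: for $x_1<x_2$ the path from $x_2$ reaches $x_1$ before $0$, so the strong Markov property gives $\PP_{x_2}(\T_0<t)=\EE_{x_2}[\ind_{\T_{x_1}<t}\,\PP_{x_1}(\T_0<t-\T_{x_1})]\le\PP_{x_1}(\T_0<t)$; hence $\lim_{x\to\infty}\PP_x(\T_0<2M)$ exists and is $\ge\tfrac12>0$, i.e. $(iv)$ with $y=0$, $t=2M$.

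The implication $(iv)\Rightarrow(i)$ is the substantial one. Given $y,t$ as in $(iv)$, put $p:=\lim_{x\to\infty}\PP_x(\T_y<t)>0$; by the same monotonicity argument (now for the hitting time of $y$) one gets $\PP_w(\T_y\ge t)\le 1-p$ for all $w>y$, and iterating the strong Markov property at $t,2t,\dots$ gives $\PP_x(\T_y>kt)\le(1-p)^k$, whence $\EE_x(\T_y)\le\int_0^\infty\PP_x(\T_y>s)\,ds\le t/p$ for every $x>y$. To convert this uniform bound into $\HH_1$ I would bound $\EE_x(\T_y)$ from below by the expected occupation time of $(y,x]$ before $\T_y$: the Green function of $X$ killed at $y$ is $G_y(x,w)=\Lambda(x\wedge w)-\Lambda(y)$, obtained as the limit of the Green functions on $(y,N)$ since $\Lambda(N)\to\infty$ (equivalently, solve $Lv=-1$ on $(y,N)$ with $v(y)=v(N)=0$, so $v_N(x)=\EE_x(\T_y\wedge\T_N)\le\EE_x(\T_y)$, and let $N\to\infty$). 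This yields, for $x>y$,
$$\frac{t}{p}\ \ge\ \EE_x(\T_y)\ \ge\ \int_{(y,x]}\big(\Lambda(w)-\Lambda(y)\big)\,d\mu(w)\ =\ 2\int_y^x\Big(\int_y^w e^{\gamma(v)}\,dv\Big)e^{-\gamma(w)}\,dw,$$
and letting $x\to\infty$ and applying Tonelli, the right-hand side increases to $2\int_y^\infty e^{\gamma(v)}\int_v^\infty e^{-\gamma(\xi)}\,d\xi\,dv=\m(y)$, so $\m(y)<\infty$. In particular $\int_v^\infty e^{-\gamma}<\infty$ for all $v$ (hence $\mu$ is finite), and therefore $\m(0)=\m(y)+2\int_0^y e^{\gamma(v)}\int_v^\infty e^{-\gamma(\xi)}\,d\xi\,dv\le\m(y)+2\Lambda(y)\int_0^\infty e^{-\gamma(\xi)}\,d\xi<\infty$, which is exactly $\HH_1$.

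The main obstacle is precisely this last step: producing a sharp enough \emph{lower} bound on $\EE_x(\T_y)$ in terms of the scale and speed data and controlling its behaviour as $x\to\infty$. One has to keep in mind that, before $\HH_1$ is known, the speed measure $\mu$ need not be finite, so $\m(y)$, $\mu((x,\infty))$ and $\EE_x(\T_y)$ are a priori only elements of $[0,\infty]$, and the occupation-time (or ODE-truncation) argument must be carried out with this in mind. Every other implication reduces either to the pointwise identity $L\m=1$ or to Markov's inequality combined with the geometric-decay iteration and the monotonicity of $x\mapsto\PP_x(\T_z<t)$.
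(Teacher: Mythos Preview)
Your argument is self-contained and takes a genuinely different route from the paper. The paper's proof is almost entirely by citation: the equivalences $(i)\Leftrightarrow(ii)\Leftrightarrow(iv)$ are lifted from \cite[Proposition 7.6]{Cattiaux}, and $(ii)\Rightarrow(iii)$ is obtained from the moment recursion of Theorem~\ref{mom}, which gives $\EE_x(\T_0^n)\le n!\,\m(0)^n$ and hence the sharper bound $\EE_x(e^{\lambda\T_0})\le(1-\lambda\m(0))^{-1}$ for every $\lambda<1/\m(0)$. Your geometric-decay argument for $(ii)\Rightarrow(iii)$ is more elementary (it needs no moment formula) but yields a smaller range, roughly $a<(\ln 2)/(2\m(0))$; your direct proofs of $(i)\Rightarrow(ii)$ and $(iv)\Rightarrow(i)$ via the Lyapunov identity $L\m=1$ and the Green function are exactly what the cited reference does, so the trade-off is self-containment versus sharpness of the exponential constant.

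There is, however, one genuine gap in your $(iv)\Rightarrow(i)$. You write ``since $\Lambda(N)\to\infty$'' to pass to the Green-function limit $G_y(x,w)=\Lambda(x\wedge w)-\Lambda(y)$, but under $(iv)$ alone this has not been established; in the paper $\Lambda(\infty)=\infty$ is derived only \emph{after} $\HH_1$, via Cauchy--Schwarz and the finiteness of $\mu$. If $\Lambda(\infty)<\infty$, the truncated Green function on $(y,N)$ converges for $w\le x$ to $(\Lambda(w)-\Lambda(y))\cdot\frac{\Lambda(\infty)-\Lambda(x)}{\Lambda(\infty)-\Lambda(y)}$, and the second factor tends to $0$ as $x\to\infty$, which kills your lower bound. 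The same issue contaminates the iteration $\PP_x(\T_y>kt)\le(1-p)^k$, since on the event $\{\zeta\le t\}$ of explosion before time $t$ one has $\T_y=\infty$ and the Markov step cannot be applied. The fix is short and should be inserted first: from $\PP_x(\T_y<t)\ge p$ for all $x>y$ you get $\PP_x(\T_y<\infty)\ge p$, while for the minimal diffusion $\PP_x(\T_y<\infty)=\lim_N\PP_x(\T_y<\T_N)=\frac{\Lambda(\infty)-\Lambda(x)}{\Lambda(\infty)-\Lambda(y)}$ whenever $\Lambda(\infty)<\infty$, and this tends to $0$ as $x\to\infty$; hence $\Lambda(\infty)=\infty$. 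With this one line in place, both the Green-function computation and the geometric iteration go through as you wrote them.
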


\bi In Section \ref{sec:P_infty}, we  define $X^{\infty}$ by monotonicity of the flow and  prove that  when $x\to \infty$, the process $X^x$ converges a.s., 
uniformly on compact sets of $(0,\infty)$ (in time) to $X^\infty$ that satisfies: for all $0<s<t\le \T_0^{(\infty)}=\lim\limits_{x\to\infty} \T^{(x)}_0$,
$$
X^\infty_t=X^\infty_s+B_t-B_s-\int_s^t \q(X^\infty_u) \, du, \, X^\infty_0=\infty.
$$
We denote by $\PP_\infty$ the distribution of $X^\infty$ and $\EE_\infty$ the associated expectation. Similarly, we define
$\T_z^{(\infty)}=\lim\limits_{x\to\infty} \T^{(x)}_z$, which results on the hitting time of $z$ for the process $X^\infty$.

We will quantify the speed of convergence using the Lyapunov function $\m$. 
The process $X^{\infty}$ provides  a relevant approximation of the diffusion when  the initial state is large. 
In particular, it yields   the large population approximation of diffusion processes where the 
demographic parameters are fixed but the initial size of the population is large. As in \cite{B}, 
we consider the image of the flow under a diffeomorphism  which gives a natural distance to 
compare the stochastic flow with its limit at infinity and a deterministic function. But here we can 
exploit the Lyapunov function $\m$ and its regularity, which provides a relevant and tractable 
compactification of the space.

\bi
The spectral study of the semigroup of $X$ as well as a fine study of the behavior of this process near $\infty$ 
will be done under an extra assumption on the asymptotic behavior of $\q$.
\smallskip
$$ \HH_2: \qquad \qquad 
%\noindent${\bf \HH_2}$: $\q$ has the following behavior at $\infty$
\lim\limits_{x\to \infty} \q(x)=\infty \quad \text{and} \quad
\lim\limits_{x\to\infty} \frac{\q'(x)}{\q^2(x)}=0. \qquad \qquad \qquad
$$

An important and interesting remark is that under $\HH_2$ it holds
$$
2q(y)e^{\gamma(y)}\int_y^\infty e^{-\gamma(\xi)} \,d\xi\,
{\underset{y \to \infty}{\longrightarrow}} 1,
$$
which is a direct consequence of l'H\^opital's rule, see Corollary \ref{cor:1}. 
In particular, under $\HH_1$ and $\HH_2$, we have  for large $z$ 
\be
\label{convint}
\int_z^\infty \frac{1}{\q(x)}\, dx<\infty.
\ee
Noting  that 
\be
\label{eq:Mgrande}
M(z)=\int_z^\infty \frac{1}{\q(y)} \, dy
\ee
is the time it takes for the deterministic flow ($\dot y=-q(y)$) 
starting at infinity to reach the point $z$, we have that 
$\m(z)=\EE(\T_z^{(\infty)})$ is asymptotically equivalent to $M(z)$ (see Corollary \ref{cor:1} $(ii)$) and 
\be
\label{eq:m_M}
\lim\limits_{z \to \infty} \frac{\m(z)}{M(z)}=1.
\ee

\bi We can now state the main results of this article. First, we describe the behavior of the hitting of large integers and deduce an approximation of the process starting from infinity for small times. We follow \cite{B_M_M}
in the gradual regime and prove a Law of Large Numbers (LLG)  and a Central Limit Theorem (CLT). The main difference is the fact that the state space is continuous, which makes the study  of the moments of hitting times and the derivation of the position of the process more involved. On the other hand, we are able here to make all the assumptions explicit in terms of $q$ using finer estimates.

\smallskip

\begin{theorem}  
\label{the:1} Assume that $ \HH_1$ and  $\HH_2$ hold, then
 in probability
$$
\lim\limits_{z\to \infty} \,\frac{\T^{(\infty)}_z}{\EE(\T^{(\infty)}_z)}=1.
$$
If we assume further that 

\be
\label{hypotruc}
\int_{z_0}^\infty \frac{1}{\q^3(y) (\int_y^\infty \q^{-1}(x) \, dx)^2}\, dy<\infty,
\ee
then the convergence is almost sure.
\end{theorem}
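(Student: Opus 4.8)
The plan is to decompose the hitting time $\T^{(\infty)}_z$ as a sum of increments over a geometrically (or dyadically) spaced sequence of barriers, and to analyze each regime — the \emph{gradual} descent from infinity down to a large level, and the remaining bounded part — separately. Concretely, fix a reference level $z_0$ and for $z\ge z_0$ write $\T^{(\infty)}_z$ as the hitting time from $\infty$ of $z$. Using the strong Markov property of the flow $X^\infty$ at the successive passage levels, one writes $\T^{(\infty)}_z = \sum_k \xi_k$, where $\xi_k$ is the time for the process started at barrier level $a_{k+1}$ to reach $a_k$, with $a_k \downarrow z$ a suitably chosen decreasing sequence (e.g. $a_k$ chosen so that $M(a_k)$ or $\m(a_k)$ grows geometrically). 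By the Markov property these $\xi_k$ are independent, and by monotonicity of the flow one controls their means via $\m$ and $M$, using \eqref{eq:m_M} and Corollary \ref{cor:1}. The first assertion, convergence in probability of $\T^{(\infty)}_z/\EE(\T^{(\infty)}_z)$ to $1$, then follows from a weak law of large numbers for this triangular array: one needs $\mathrm{Var}(\T^{(\infty)}_z) = o\big(\EE(\T^{(\infty)}_z)^2\big)$, which will come from estimating $\mathrm{Var}(\xi_k)$ in terms of $q$ near $a_k$ and summing; here $\HH_2$ (in particular $q'/q^2 \to 0$) gives the needed asymptotics $2q(y)e^{\gamma(y)}\int_y^\infty e^{-\gamma(\xi)}\,d\xi \to 1$, from which the second moment of the local increments is controlled.

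For the strengthening to almost sure convergence I would proceed in two stages. First, show that along a fixed sequence $z_n \to \infty$ (say $z_n$ chosen so that $\EE(\T^{(\infty)}_{z_n})$ grows geometrically) the convergence $\T^{(\infty)}_{z_n}/\EE(\T^{(\infty)}_{z_n}) \to 1$ holds almost surely, via Borel--Cantelli applied to the tail bounds $\PP\big(|\T^{(\infty)}_{z_n}/\EE(\T^{(\infty)}_{z_n}) - 1| > \varepsilon\big)$; these tail bounds follow from Chebyshev together with a quantitative variance estimate, and this is exactly where the extra hypothesis \eqref{hypotruc} enters — it is precisely (up to the change of variables relating $\m$, $M$ and $q$) a Kolmogorov-type summability condition $\sum_k \mathrm{Var}(\xi_k) / \EE(\T^{(\infty)}_{a_k})^2 < \infty$ ensuring that the normalized fluctuations are summable in probability along the dyadic subsequence. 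Second, interpolate between consecutive $z_n$: since $z \mapsto \T^{(\infty)}_z$ is monotone (nondecreasing in $z$) and $z\mapsto \EE(\T^{(\infty)}_z) = \m(z)$ is continuous and monotone with $\m(z_{n+1})/\m(z_n) \to 1$ by the geometric spacing taken close to $1$ (or by a standard sandwiching argument letting the ratio tend to $1$), monotonicity upgrades a.s. convergence along the subsequence to a.s. convergence along the full parameter, in the classical manner (as for the SLLN via monotone interpolation).

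The key technical inputs are: (a) the identity/asymptotics for $\EE_x(\T_y) = \m(y)$-type quantities and their analogues for second moments — these come from the standard ODE representation of moments of diffusion hitting times (the functions $u_1, u_2$ solving $\tfrac12 u_j'' - q u_j' = -j u_{j-1}$ with appropriate boundary conditions), combined with $\HH_1$ for integrability and $\HH_2$ for the sharp $y\to\infty$ asymptotics; (b) the reduction, via the Markov property of $X^\infty$ at barrier crossings, of $\T^{(\infty)}_z$ to a sum of independent increments, which legitimizes the classical LLN/SLLN machinery. The main obstacle I anticipate is item (a) in the \emph{second-moment} case: deriving a clean, explicitly-$q$-expressed upper bound on $\mathrm{Var}\big(\xi_k\big)$ — equivalently on $\EE_x((\T_{a_k})^2)$ for the barrier diffusion — that is both sharp enough that summing it against $\EE(\T^{(\infty)}_{a_k})^{-2}$ reproduces exactly the integral in \eqref{hypotruc}, and robust enough to hold uniformly in the starting point (using monotonicity/comparison of the flow to replace "started at $a_{k+1}$" by "started at $\infty$"). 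The continuous state space — as the authors note — is what makes this moment analysis more delicate than the birth–death analogue of \cite{B_M_M}, so careful use of l'Hôpital-type asymptotics (Corollary \ref{cor:1}) and the Lyapunov function $\m$ will be essential to keep the estimates tight.
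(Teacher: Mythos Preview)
Your proposal is essentially the paper's approach: decompose $\T^{(\infty)}_z$ along a geometrically spaced sequence of barriers (the paper takes $z_n$ with $\m(z_n)=(1-\rho)^n\m(z_0)$), use independence of the increments $\tau_k$, establish a.s.\ convergence along the subsequence from the summability condition $\sum_k \mathrm{Var}(\tau_k)/\m(z_{k-1})^2<\infty$ (shown equivalent to \eqref{hypotruc} via the asymptotics of Corollary~\ref{cor:1}), and then sandwich by monotonicity, letting $\rho\to 0$. Two small points: the convergence in probability is done more directly in the paper---no triangular array is needed, just Chebyshev applied to $\T^{(\infty)}_z$ itself using the second-moment estimate $\EE_\infty(\T_z^2)/\m(z)^2\to 1$ (Corollary~\ref{moment2}); and for the a.s.\ subsequence step the paper invokes Klesov's Proposition~1 (on tail sums of independent variables with decreasing normalizers) rather than Borel--Cantelli, though both routes lead to the same summability condition. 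Also watch your directions: $\m(z_n)=\EE(\T^{(\infty)}_{z_n})$ decays (not grows) geometrically, and $z\mapsto \T^{(\infty)}_z$ is nonincreasing.
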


\medskip

\bi{\bf Notation:}  
We use the short hand notation $f \approx g$ to mean that 
$0< \liminf\limits_{z\to \infty} \frac{f(z)}{g(z)}\le \limsup\limits_{z\to \infty} \frac{f(z)}{g(z)}<\infty$.

\begin{remark} The hypothesis \eqref{hypotruc}
is satisfied in the following cases
\begin{itemize}
\item $\q(x)=e^{p(x)}$, with $p(x)\to \infty$, $p'(x)e^{-p(x)}\to 0$, $p''(x)/(p'(x))^2\to 0$ as $x\to \infty$
and $\int_{z_0}^\infty  [(p'(x))^2\vee 1]\,  e^{-p(x)} \, dx<\infty$. Indeed, consider $H=e^{-p}$, then
$$
H'=-p'H,\, H''=-p''H+(p')^2H,
$$
and
$$
\frac{HH''}{(H')^2}=\frac{(-p''+(p')^2)H^2}{(p')^2H^2}\to 1 \hbox{ at } \infty.
$$
So, by Lemma \ref{lem:1}, we get 
$$
\int_x^\infty \frac{1}{\q(y)}\, dy\approx \frac{H^2(x)}{-H'(x)}=\frac{e^{-p(x)}}{p'(x)},
$$
and \eqref{hypotruc} follows from $\int_{z_0}^\infty  (p'(x))^2 e^{-p(x)} \, dx<\infty$. 

\me
\item $\q(x)\approx x^{a}$ for $a>1$. It is clear that $\int_y^\infty \frac{1}{\q(x)}\, dx\approx \frac{1}{a-1} y^{1-a}$
so
$$
\int_{z_0}^\infty \frac{1}{\q^3(y) \left(\int_y^\infty \q^{-1}(x) \, dx\right)^2} dy 
\approx (a-1)^2\int_{z_0}^\infty \frac{1}{y^{a+2}} \, dy<\infty
$$

\end{itemize}
\end{remark}

\medskip

\begin{corollary} 
\label{cor:Pierre}
Assume that $ \HH_1$ and  $\HH_2$ hold, then the following limits exist
\begin{itemize} 
\item[(i)]  For all $\lambda<1$, we have
$$
\lim\limits_{z\to \infty} \EE_\infty\left(\exp\left(\lambda\, \frac{\T_z}{\EE_\infty(\T_z)}\right)\right)=e^\lambda
$$
\item[(ii)] For all $n\ge 1$,
$$
\lim\limits_{z\to \infty} \frac{\EE_\infty(\T_z^{n})}{(\EE_\infty(\T_z))^{n}}=1.
$$
\end{itemize}
\end{corollary}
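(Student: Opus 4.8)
The plan is to derive both (i) and (ii) from Theorem~\ref{the:1} together with a single uniform exponential integrability estimate. Recall $\EE_\infty(\T_z)=\m(z)$ and set $Y_z:=\T_z^{(\infty)}/\m(z)$, so that $Y_z\to 1$ in $\PP_\infty$-probability as $z\to\infty$ by Theorem~\ref{the:1}; hence $e^{\lambda Y_z}\to e^\lambda$ and $Y_z^n\to 1$ in probability for all $\lambda\in\RR$, $n\ge1$, by the continuous mapping theorem. For $\lambda\le 0$ the variables $e^{\lambda Y_z}\le 1$ are bounded, so dominated convergence already gives (i); the substance is the range $0<\lambda<1$ in (i) and all $n$ in (ii). Both reduce to the key claim
\begin{equation}
\label{eq:keyexp}
\text{for every }\theta\in(0,1):\qquad \limsup_{z\to\infty}\ \EE_\infty\!\big(e^{\theta\,\T_z^{(\infty)}/\m(z)}\big)<\infty .
\end{equation}
Indeed, given $\lambda\in(0,1)$ pick $p\in(1,1/\lambda)$ and $\theta=p\lambda\in(0,1)$; then \eqref{eq:keyexp} says $\{e^{\lambda Y_z}\}$ is bounded in $L^p(\PP_\infty)$ for $z$ large, hence uniformly integrable, so $\EE_\infty(e^{\lambda Y_z})\to e^\lambda$. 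Similarly, for fixed $n,p$ the elementary bound $y^{np}\le c_{n,p}\,e^{y/2}$ ($y\ge0$) together with \eqref{eq:keyexp} at $\theta=\tfrac12$ shows $\{Y_z^n\}$ is $L^p$-bounded for $z$ large, hence uniformly integrable, so $\EE_\infty(Y_z^n)\to 1$, which is (ii) after multiplying through by $\m(z)^n$.

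To prove \eqref{eq:keyexp} I would reuse the independent--increments decomposition underlying Theorem~\ref{the:1}. Fix $z$, choose a partition $z=y_0<y_1<y_2<\cdots$ with $y_k\uparrow\infty$, and apply the strong Markov property successively at the times $\T_{y_{k+1}}^{(\infty)}$ to write
\begin{equation}
\label{eq:decomp}
\T_z^{(\infty)}=\sum_{k\ge0}\xi_k,\qquad (\xi_k)_{k\ge0}\ \text{independent},\qquad \xi_k\stackrel{d}{=}\T_{y_k}^{(y_{k+1})},
\end{equation}
which is licit since $\T_{y_k}^{(\infty)}\downarrow0$; note $\EE(\xi_k)=\m(y_k)-\m(y_{k+1})$, so $\sum_k\EE(\xi_k)=\m(z)$. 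Then $\EE_\infty(e^{s\T_z^{(\infty)}})=\prod_{k\ge0}\EE(e^{s\xi_k})$. Writing $\psi_k(s)=\log\EE(e^{s\xi_k})$ (finite precisely for $s$ below the principal eigenvalue $\lambda_1(y_k)$ of the generator of $X$ on $(y_k,\infty)$ killed at $y_k$) and using the Taylor formula with integral remainder,
$$
\psi_k(s)=s\,\EE(\xi_k)+\int_0^s(s-u)\,\psi_k''(u)\,du,\qquad \psi_k''(u)=\operatorname{Var}_{(u)}(\xi_k)\ge0,
$$
where $\operatorname{Var}_{(u)}$ is the variance under the law tilted by $e^{u\xi_k}$, we get
$$
\EE_\infty\!\big(e^{s\,\T_z^{(\infty)}}\big)\ \le\ \exp\!\Big(s\,\m(z)+\sum_{k\ge0}\int_0^s(s-u)\,\psi_k''(u)\,du\Big).
$$
Thus \eqref{eq:keyexp} follows once the remainder $R(z):=\sum_{k\ge0}\int_0^{s}(s-u)\,\psi_k''(u)\,du$, with $s=\theta/\m(z)$, is shown to be bounded uniformly in $z$ (it is in fact natural to expect $R(z)\to0$, which together with the matching Jensen lower bound $\EE_\infty(e^{\theta Y_z})\ge e^\theta$ would give a direct proof of (i)).

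The main obstacle is precisely this remainder estimate. One needs upper bounds on the tilted variances $\psi_k''(u)=\EE_{(u)}(\xi_k^2)-\EE_{(u)}(\xi_k)^2$ that are summable in $k$ and uniform over $u\in[0,\theta/\m(z)]$ and over $z$. Two ingredients enter. First, at $u=0$ one needs $\sum_k\operatorname{Var}(\xi_k)=\operatorname{Var}_\infty(\T_z^{(\infty)})=o(\m(z)^2)$ and, more precisely, a $\psi_k''(u)\le C\,(\EE\xi_k)^2$--type control; these are essentially the second-moment estimates already behind the law of large numbers in Theorem~\ref{the:1} (computable from the explicit Green function of $X$ on $(y_k,\infty)$ against the speed measure $\mu$). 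Second, and this is the delicate point, one must keep the tilting parameter $u\le\theta/\m(z)$ strictly below every $\lambda_1(y_k)$ and quantify how close it gets, since the variances under tilting blow up as $u\uparrow\lambda_1(y_k)$; the binding constraint is the ``bottleneck'' level $y_0=z$. This is where the restriction $\lambda<1$ is sharp: one should show $\lambda_1(z)\,\m(z)\to1$ as $z\to\infty$ — the operator counterpart of the deterministic descent $\m\sim M$, which follows from the asymptotics $2q(y)e^{\gamma(y)}\int_y^\infty e^{-\gamma(\xi)}\,d\xi\to1$ of Corollary~\ref{cor:1} (equivalently, the right tail of $\T_z^{(\infty)}/\m(z)$ becomes asymptotically exponential with rate $1$). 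For $\theta\ge1$ one has $s=\theta/\m(z)>\lambda_1(z)$ eventually, so $\EE(e^{s\xi_0})=\infty$ and the statement fails — hence the cutoff at $\lambda=1$. Granting \eqref{eq:keyexp}, (i) and (ii) follow from the soft uniform--integrability arguments of the first paragraph.
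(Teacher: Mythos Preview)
Your overall strategy --- convergence in probability of $Y_z=\T_z^{(\infty)}/\m(z)$ from Theorem~\ref{the:1}, upgraded to convergence of moments and exponential moments via uniform integrability --- is exactly the paper's. The difference is in how the key bound~\eqref{eq:keyexp} is obtained, and here you are working much too hard and leaving a genuine gap.

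The paper does not decompose $\T_z^{(\infty)}$ at all for this corollary. The bound~\eqref{eq:keyexp} is \emph{immediate} from Theorem~\ref{mom}\,(ii) (inequality~\eqref{moment-expo}): for every $x\ge z$ and every $\lambda<1/\m(z)$ one has $\EE_x(e^{\lambda\T_z})\le (1-\lambda\m(z))^{-1}$, the proof being the elementary recursion $\EE_x(\T_z^n)\le n!\,\m(z)^n$ that comes straight out of formula~\eqref{for:0}. Taking $x\to\infty$ and setting $\lambda=\theta/\m(z)$ with $\theta\in(0,1)$ gives
\[
\EE_\infty\!\big(e^{\theta Y_z}\big)\ \le\ \frac{1}{1-\theta}\qquad\text{for all }z,
\]
which is~\eqref{eq:keyexp} with an explicit uniform constant. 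Your machinery of independent increments, tilted variances, and the spectral asymptotic $\lambda_1(z)\m(z)\to 1$ is therefore unnecessary; moreover, that spectral asymptotic is not established in the paper (only the one-sided inequality $\lambda_1(z)\m(z)>1$ appears, in Section~7), so your proposed route is genuinely incomplete as written. Once you replace the whole second half of your argument by the one-line appeal to~\eqref{moment-expo}, the proof is complete and coincides with the paper's.
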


\medskip

\begin{theorem}
\label{the:2}
Assume that $ \HH_1$ and  $\HH_2$ hold. Then, we have the limit in distribution
$$
\frac{\T^{(\infty)}_z-\EE(\T^{(\infty)}_z)}{\sqrt{\emph{Var}(\T^{(\infty)}_z)}}\,\overset{\mathscr{D}}{\underset{z \to \infty}{\longrightarrow}}\,\mathsf{Z},
$$
where $\mathsf{Z}$ has a standard Gaussian distribution.
\end{theorem}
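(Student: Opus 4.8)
The plan is to write $\T^{(\infty)}_z-\EE(\T^{(\infty)}_z)$ as the terminal value of a continuous square-integrable martingale and then invoke the central limit theorem for continuous martingales; this reduces the whole question to a law of large numbers for the bracket of that martingale, mirroring the way Theorem~\ref{the:1} is a law of large numbers for $\T^{(\infty)}_z$ itself. Set $u(x):=\m(z)-\m(x)$ for $x\ge z$, so $u(z)=0$, $u(\infty)=\m(z)$ (as $\m$ decreases to $0$), and $u'(x)=-\m'(x)=2e^{\gamma(x)}\int_x^\infty e^{-\gamma(\xi)}\,d\xi\ge0$, which is continuous and, by Corollary~\ref{cor:1}, tends to $0$ at infinity, hence is bounded on $[z,\infty)$; a direct computation (using $\gamma'=2q$) shows $\tfrac12u''-qu'=-1$ on $[z,\infty)$. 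Applying It\^o's formula to $u(X^\infty_t)$ on intervals $[s,t]\subset(0,\T^{(\infty)}_z]$, letting $s\downarrow0$ (so $X^\infty_s\to\infty$, $u(X^\infty_s)\to\m(z)$, and the stochastic integral converges because $u'$ is bounded along the path), and taking $t=\T^{(\infty)}_z$ where $u(X^\infty_{\T^{(\infty)}_z})=u(z)=0$, gives
\[
\T^{(\infty)}_z-\EE(\T^{(\infty)}_z)=\int_0^{\T^{(\infty)}_z}u'(X^\infty_s)\,dB_s .
\]
Since $\EE\bigl[\int_0^{\T^{(\infty)}_z}u'(X^\infty_s)^2\,ds\bigr]\le\bigl(\sup_{[z,\infty)}u'\bigr)^2\m(z)<\infty$, the right-hand side is a true square-integrable martingale with bracket $V_z:=\int_0^{\T^{(\infty)}_z}u'(X^\infty_s)^2\,ds$, and Dynkin's formula applied to the bounded solution $g$ of $\tfrac12g''-qg'=-(u')^2$, $g(z)=0$, yields
\[
\EE(V_z)=g(\infty)=2\int_z^\infty e^{\gamma(y)}\!\int_y^\infty e^{-\gamma(\xi)}u'(\xi)^2\,d\xi\,dy=\mathrm{Var}(\T^{(\infty)}_z)=:\sigma_z^2 .
\]
Thus the normalisation in the statement is exactly $\sqrt{\EE(V_z)}$.

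By the central limit theorem for continuous martingales (equivalently, writing the martingale as $\beta_{V_z}$ via Dambis--Dubins--Schwarz and noting $\beta_{V_z}/\sigma_z=\widehat\beta_{V_z/\sigma_z^2}$ for the Brownian motion $\widehat\beta_v:=\beta_{\sigma_z^2v}/\sigma_z$), it is now enough to prove $V_z/\sigma_z^2\to1$ in probability as $z\to\infty$. I would get this in $L^2$, i.e.\ show $\mathrm{Var}(V_z)=o(\sigma_z^4)$ (recall $\EE(V_z)=\sigma_z^2$ exactly). Iterating the construction above with $g$ in place of $u$: It\^o's formula gives $V_z-\sigma_z^2=\int_0^{\T^{(\infty)}_z}g'(X^\infty_s)\,dB_s$, with $g'(y)=2e^{\gamma(y)}\int_y^\infty e^{-\gamma(\xi)}u'(\xi)^2\,d\xi$, whence by the It\^o isometry
\[
\mathrm{Var}(V_z)=\EE\Bigl[\int_0^{\T^{(\infty)}_z}g'(X^\infty_s)^2\,ds\Bigr]=2\int_z^\infty e^{\gamma(y)}\!\int_y^\infty e^{-\gamma(\xi)}g'(\xi)^2\,d\xi\,dy .
\]

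It remains to compare the two explicit double integrals $\sigma_z^2=\int_z^\infty g'(y)\,dy$ and $\mathrm{Var}(V_z)$, and this is where $\HH_2$ enters. By Corollary~\ref{cor:1}, $q(y)u'(y)=2q(y)e^{\gamma(y)}\int_y^\infty e^{-\gamma(\xi)}\,d\xi\to1$, i.e.\ $u'(y)\sim q(y)^{-1}$; inserting this into the formula for $g'$ and applying the same l'H\^opital-type estimate (legitimate because the slowly varying factor $(u')^2$ is exchanged with the exponential weight $e^{-\gamma}$, for which $\gamma''/(\gamma')^2=q'/(2q^2)\to0$) gives $g'(y)\sim u'(y)^3\sim q(y)^{-3}$. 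Integrating these tail equivalences, $\sigma_z^2\sim\int_z^\infty q(y)^{-3}\,dy$ while $\mathrm{Var}(V_z)\sim\int_z^\infty q(y)^{-7}\,dy$, so that $\mathrm{Var}(V_z)=o(\sigma_z^4)$ amounts to $\int_z^\infty q^{-7}=o\bigl((\int_z^\infty q^{-3})^2\bigr)$. This follows from $\HH_2$: with $m_z:=\inf_{y\ge z}q(y)\to\infty$ one has trivially $\int_z^\infty q^{-7}\le m_z^{-4}\int_z^\infty q^{-3}$, whereas the slow variation of $1/q$ forced by $q'/q^2\to0$ keeps $q\le2m_z$ on an interval of length of order $m_z^{-1}$, so $\int_z^\infty q^{-3}\ge c\,m_z^{-4}$ with $c$ as large as one wishes; hence $\int_z^\infty q^{-7}/(\int_z^\infty q^{-3})^2\le m_z^{-4}/\int_z^\infty q^{-3}\to0$, and the proof is complete.

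The main obstacle is that last step: turning the heuristics $u'\sim1/q$ and $g'\sim1/q^3$ into equivalences rigorous under $\HH_1$--$\HH_2$ only — each conceals the interchange of an \emph{a priori} merely slowly varying factor with the fast weight $e^{-\gamma}$ — and doing so without assuming $q$ monotone; these are however precisely the kind of l'H\^opital estimates already carried out for Corollary~\ref{cor:1}. One can also soften the issue: the self-improving inequality $g'(y)\le\bigl(\sup_{\xi\ge y}u'(\xi)\bigr)^2u'(y)$, immediate from the formula for $g'$, gives $\mathrm{Var}(V_z)\le\bigl(\sup_{\xi\ge z}u'(\xi)\bigr)^4\sigma_z^2$, reducing the whole comparison to the single asymptotic estimate $\bigl(\sup_{\xi\ge z}u'(\xi)\bigr)^4=o(\sigma_z^2)$.
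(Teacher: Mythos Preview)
Your approach is correct and genuinely different from the paper's. The paper does not use the martingale representation at all: it fixes $\rho\in(0,1)$, builds a sequence $z=z_0<z_1<\cdots\to\infty$ so that $\mathrm{Var}(\tau_n)=\rho(1-\rho)^{n-1}\sigma_z^2$ for the independent increments $\tau_n=\T^{(\infty)}_{z_{n-1}}-\T^{(\infty)}_{z_n}$, and then bounds $\bigl|\EE e^{it(\T_z-\m(z))/\sigma_z}-e^{-t^2/2}\bigr|$ by a Lyapunov-type sum $\sigma_z^{-3}\sum_k\EE|\tau_k-\EE\tau_k|^3$ plus a term $O(\rho)$. Controlling the Lyapunov sum requires the fourth-moment asymptotic $\EE[(\T_z-\m(z))^4]/\sigma_z^4\to3$ (their Lemma~\ref{eq:fourth}), which is obtained by a rather laborious chain of l'H\^opital computations in Appendix~B. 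Your route replaces all of this by one iteration of the It\^o/Dynkin scheme: once $V_z/\sigma_z^2\to1$ in $L^2$ the continuous martingale CLT (or DDS) finishes the job. The price is that you rely on the martingale CLT rather than proving characteristic-function convergence by hand; the gain is that the whole argument reduces to the single elementary estimate $\int_z^\infty q^{-7}=o\bigl((\int_z^\infty q^{-3})^2\bigr)$, and you avoid the fourth-moment computation entirely.

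Two small comments. First, your final step is fine but worth writing out: taking $y_z\ge z$ with $q(y_z)=m_z$ and using $|q'|\le\epsilon q^2$ on $[z,\infty)$ gives $q\le 2m_z$ on $[y_z,y_z+1/(4\epsilon m_z)]$, whence $\int_z^\infty q^{-3}\ge (32\epsilon)^{-1}m_z^{-4}$ and $m_z^{-4}/\int_z^\infty q^{-3}\le 32\epsilon\to0$; this is exactly the ``$c$ as large as one wishes'' you assert. Second, your alternative inequality $g'(y)\le(\sup_{\xi\ge y}u')^2 u'(y)$ is a genuine simplification: it bypasses the $g'\sim q^{-3}$ estimate and reduces everything to $(\sup_{\xi\ge z}u'(\xi))^4=o(\sigma_z^2)$, which, since $\sup_{\xi\ge z}u'(\xi)\sim m_z^{-1}$ by Corollary~\ref{cor:1}(i), is again $m_z^{-4}=o(\int_z^\infty q^{-3})$. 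So the ``main obstacle'' you flag is not really one.
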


\me We shall see in Appendix \ref{moments}  that 
\be
\label{varex}\lim\limits_{z\to \infty} \frac{\hbox{Var}(\T^{(\infty)}_z)}{\int_z^\infty \frac{1}{\q^3(y)} \, dy}=1.
\ee

\bigskip
%In all the paper we  assume that Assumptions $ \HH_1$ and  $\HH_2$ hold.

We can now invert in a sense this result and derive the fluctuations
of the process starting from infinity.

\begin{theorem}
\label{fluctuX}
Assume  $\HH_{1}$, $\lim\limits_{x\to\infty}q(x)=\infty$,  together with
\begin{equation}
\label{eq:H2.5.1}
0<\Sigma:=\lim\limits_{z\to\infty}\frac{\int_{z}^{\infty}q(x)^{-1}\;dx}{q^{2}(z)\;\int_{z}^{\infty}q(x)^{-3}dx}<\infty\;,
\end{equation}
and
\begin{equation}
\label{eq:H2.5.2}
\lim\limits_{z\to\infty}\frac{q'(z)}{q(z)}\sqrt{\int_{z}^{\infty}\frac{1}{q(x)}\;dx}=0\;.
\end{equation}
Then $\HH_{2}$ holds and 
$$
\frac{X_{t}^{\infty}-\m^{-1}(t)}{\sqrt{\Sigma\, t}}\,\overset{\mathscr{D}}{\underset{t \searrow 0}{\longrightarrow}}\,\mathsf{Z}\;,
$$
where $\mathsf{Z}$ has a standard Gaussian distribution.
\end{theorem}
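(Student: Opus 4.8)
The plan is to transfer the Gaussian fluctuations of the hitting times $\T_z^{(\infty)}$ from Theorem \ref{the:2} to fluctuations of the position $X_t^\infty$ via the duality between the two: $\{X_t^\infty < z\}$ is essentially the event $\{\T_z^{(\infty)} > t\}$ (up to the monotone coupling and the fact that $X^\infty$ decreases, so hitting $z$ from above happens exactly once). First I would verify the implications among the hypotheses: that \eqref{eq:H2.5.1} together with $q\to\infty$ forces $q'/q^2\to 0$, hence $\HH_2$; this should follow because \eqref{eq:H2.5.1} controls $q^2 \int_z^\infty q^{-3}$ against $\int_z^\infty q^{-1}$, and an l'Hôpital/monotonicity argument (as in the remark and Lemma \ref{lem:1}) pins down the ratio $q'/q^2$. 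I would also record, from \eqref{varex} and \eqref{eq:H2.5.1}, that $\mathrm{Var}(\T_z^{(\infty)}) \sim \Sigma^{-1}\, q^{-2}(z)\,\m(z)$ asymptotically (using $\m(z)\sim M(z) = \int_z^\infty q^{-1}$), so the normalization $\sqrt{\Sigma t}$ in the position matches the square root of $\mathrm{Var}(\T_z)$ after the change of variables $t = \m(z)$.

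Next I would set up the inversion precisely. Fix $t$ small and $w\in\RR$; write $z = z(t,w) := \m^{-1}\bigl(t - w\sqrt{\Sigma t}\big/\text{(appropriate factor)}\bigr)$ — more cleanly, I would define $z$ by $X_t^\infty < \m^{-1}(t) + w\sqrt{\Sigma t}$ iff $\T_z^{(\infty)} > t$ with $z = \m^{-1}(t) + w\sqrt{\Sigma t}$, and then compare $t$ with $\EE(\T_z^{(\infty)}) = \m(z)$. A first-order Taylor expansion of $\m$ around $\m^{-1}(t)$ gives $\m(z) \approx t + w\sqrt{\Sigma t}\cdot \m'(\m^{-1}(t))$, and since $\m'(y) = -2e^{\gamma(y)}\int_y^\infty e^{-\gamma(\xi)}d\xi \sim -1/q(y)$ under $\HH_2$ (the remark after $\HH_2$), we get $t - \m(z) \sim w\sqrt{\Sigma t}\, / q(\m^{-1}(t))$. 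Therefore
$$
\PP_\infty\!\left(\frac{X_t^\infty - \m^{-1}(t)}{\sqrt{\Sigma t}} < w\right) = \PP_\infty\!\left(\T_z^{(\infty)} > t\right) = \PP_\infty\!\left(\frac{\T_z^{(\infty)} - \m(z)}{\sqrt{\mathrm{Var}(\T_z^{(\infty)})}} > \frac{t - \m(z)}{\sqrt{\mathrm{Var}(\T_z^{(\infty)})}}\right),
$$
and the right-hand threshold converges, as $t\searrow0$ (equivalently $z\to\infty$), to $-w$: indeed $t-\m(z) \sim w\sqrt{\Sigma t}/q(z)$ while $\sqrt{\mathrm{Var}(\T_z^{(\infty)})} \sim \sqrt{\m(z)/\Sigma}\,/q(z) \sim \sqrt{t/\Sigma}\,/q(z)$, whose ratio is $w\sqrt{\Sigma t}/\sqrt{t/\Sigma} = w\Sigma$... so I would need to track the constants carefully and choose the scaling so the limiting threshold is exactly $-w$; the definition of $\Sigma$ in \eqref{eq:H2.5.1} is calibrated precisely for this. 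Then Theorem \ref{the:2} gives that this probability tends to $1 - \Phi(-w) = \Phi(w)$, which is the claim.

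The main obstacle I expect is making the Taylor expansion of $\m$ rigorous and uniform enough: $z$ depends on $t$, the expansion point $\m^{-1}(t)$ runs off to infinity as $t\searrow 0$, and I need the second-order remainder in $\m(z) = t + (z - \m^{-1}(t))\m'(\xi)$ to be negligible compared to $\sqrt{\mathrm{Var}(\T_z^{(\infty)})}$. This is exactly where hypothesis \eqref{eq:H2.5.2} enters: it says $\frac{q'}{q}\sqrt{\int_z^\infty q^{-1}} \to 0$, i.e. the relative variation of $q$ over a window of size $\sqrt{t}$ (the fluctuation scale of $X_t^\infty$) is negligible, which is what guarantees $\m'$ is essentially constant $= -1/q(\m^{-1}(t))$ across the relevant range of $z$ and that the quadratic term $\tfrac12(z-\m^{-1}(t))^2\m''(\xi)$ is lower order. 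So the technical heart of the proof is a careful estimate: using $\m'(y)\sim -1/q(y)$ and $\m''(y) = -\frac{d}{dy}\frac{1}{q(y)}(1+o(1)) \sim q'(y)/q^2(y)$, bound
$$
\left|\m(z) - t - (z-\m^{-1}(t))\,\m'(\m^{-1}(t))\right| \lesssim (z-\m^{-1}(t))^2\, \sup \frac{q'}{q^2} \lesssim \Sigma t \cdot \frac{q'(\m^{-1}(t))}{q^2(\m^{-1}(t))},
$$
and check that dividing by $\sqrt{\mathrm{Var}(\T_z^{(\infty)})} \asymp \sqrt{t}/q$ yields $\sqrt{t}\,q'/q \to 0$, which is \eqref{eq:H2.5.2}. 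Once that remainder control is in place, everything else is bookkeeping with the asymptotic equivalences already recorded in the corollaries and Appendix \ref{moments}, plus continuity of $\Phi$ to pass from the $o(1)$ threshold to the Gaussian limit.
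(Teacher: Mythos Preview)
Your overall strategy---converting the CLT for hitting times into a CLT for the position via the duality between $\{X_t^\infty<z\}$ and $\{\T_z^{(\infty)}\lessgtr t\}$---is the same as the paper's, and your handling of the threshold asymptotics (Taylor expansion of $\m$, using $\m'\sim -1/q$ and hypothesis~\eqref{eq:H2.5.2} to control the remainder) is essentially what the paper packages into Lemma~\ref{tec}(i). But there is a genuine gap.

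You assert that $\{X_t^\infty<z\}$ is ``essentially'' $\{\T_z^{(\infty)}>t\}$ because ``$X^\infty$ decreases, so hitting $z$ from above happens exactly once.'' This is false: $X^\infty$ is a diffusion with unit Brownian part and is \emph{not} pathwise monotone. Only one inclusion is available: since $X_0^\infty=\infty$ and the paths are continuous, $\{X_t^\infty<z\}\subset\{\T_z^{(\infty)}\le t\}$ (note also that your displayed equality has the direction reversed). The reverse inclusion fails because, after first hitting $z$, the process may climb back above $z$ by time $t$. Consequently your argument delivers only the upper bound
\[
\limsup_{t\searrow 0}\ \PP_\infty\!\left(\frac{X_t^\infty-\m^{-1}(t)}{\sqrt{\Sigma t}}<w\right)\le \Phi(w),
\]
not the matching lower bound.

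The paper devotes roughly half of its proof to this missing direction. It writes, for $\epsilon>0$,
\[
\big\{\T_{r_y(t)}^{(\infty)}\le t\big\}\subset A_t\cup\big\{X_t^\infty\le r_{y+\epsilon}(t)\big\},
\]
where $A_t$ is the event that after hitting $r_y(t)=\m^{-1}(t)+y\sqrt t$ the process climbs back above $r_{y+\epsilon}(t)$ before time $t$. By the strong Markov property, $\PP(A_t)$ is bounded by $\PP_{r_y(t)}\big(\T_{r_{y,\epsilon}(t)}<\T_0\big)=\Lambda(r_y(t))/\Lambda(r_{y,\epsilon}(t))$, and one then shows this ratio tends to $0$ using $\Lambda(x)\approx e^{\gamma(x)}/q(x)$ together with $\gamma(r_{y,\epsilon}(t))-\gamma(r_y(t))\to\infty$; the latter comes from $q(z)\sqrt{M(z)}\to\infty$ (Lemma~\ref{conhypop}(i)) and the local comparability of $q$ over windows of width $\sqrt{\m(z)}$ (Lemma~\ref{nouveau}). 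None of this is covered by your Taylor remainder estimate. As a minor point, the paper derives $\HH_2$ from $q\to\infty$ and \eqref{eq:H2.5.2} alone (Lemma~\ref{conhypop}(ii)), not from \eqref{eq:H2.5.1} as you suggest.
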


\begin{remark} 
\label{rem:remhypop}
We shall prove that under  $\HH_{1}$ if $q$ converges to $\infty$ at infinity and \eqref{eq:H2.5.2} holds, then $\HH_2$
holds (see Lemma \ref{conhypop}). On the other hand under $\HH_{1}$ and $\HH_{2}$ the hypothesis
$$
b=\lim\limits_{z\to \infty} q'(z)\int_z^\infty \frac{1}{q(x)}\, dx\in \RR,
$$
implies:  $b\ge 0$, \eqref{eq:H2.5.1} and  \eqref{eq:H2.5.2}, with $\Sigma=2b+1$.
\end{remark}

\medskip
The proofs of the theorems will be based on an extensive computation of the moments of the hitting times.
\smallskip
In what follows, we  denote by $\LL $ the second order differential operator given by
$$
\LL u=\frac12 u'' (x)-\q(x) u'(x),
$$
for all $u\in \mathcal{C}^2([0,\infty))$.
The function $\m$ defined in (\ref{lyapounov}) satisfies  
$$
 \LL \m = 1\,,
$$ 
and  by Corollary A.2 (i)  its derivative  $\m'$ is bounded on  $[0,\infty]$.

\bigskip

Until now we have studied the law  of $\T^{(\infty)}_{z}$ 
when $z$ tends to $\infty$. We now assume $z$ fixed and describe the
tail of the law of $\T^{(\infty)}_{z}$.  Our objective is the study of the domain of attraction of the unique q.s.d. including initial 
distributions that put mass at infinity. We complement this result with a spectral 
decomposition for the transition densities
of the process starting at infinity and its approximation with the densities starting at a large finite initial condition.

\medskip

To understand these results we introduce some notations and facts.
It is recalled in Appendix C (see also \cite{Cattiaux})   that under $ \HH_1$ and $ \HH_2$, the generator $L_{z}$ on
$L^2([z,\infty),\mu_z)$ of the semigroup associated with the process $X$ killed at $z$, has a discrete spectrum 
$(-\lambda_{i}(z))_{i\in \mathbb{N}}$ and the bottom of the spectrum of $-L_{z}$ is denoted by $\lambda_{1}(z)>0$. 
The associated eigenfunction $\psi_{z,1}$ is $C^2([z,\infty))$ and satisfies 
$\LL \psi_{z,1}(x)=-\lambda_{1}(z)\, \psi_{z,1}(x)$, with
the normalization  
$$
\int_z^\infty \psi_{z,1}^2(x)\, 2e^{-(\gamma(x)-\gamma(z))}\,
dx=1\,,\, \, \psi_{z,1}(z)=0\,, 
$$ 
and $\psi_{z,1}$ is positive on $(z,\infty)$. 
The next result is an extension of Theorem 5.6 in \cite{Cattiaux} (see also \cite{Littin} for the case
the drift is singular at $0$). It allows to extend the support of the initial distribution to $\infty$ in Yaglom limit.
We write $\PP_\eta$ for the probability associated to an initial condition $X_0$ distributed as $\eta$.
\medskip

\begin{theorem} 
\label{the:3}
Assume  $ \HH_1$ and $ \HH_2$ hold. Let  $z\ge 0$ and $\eta$ be any
probability measure 
on $(z,\infty]$.
\begin{itemize}
\item[(i)] Then  
$$
\lim\limits_{t\to \infty} e^{\lambda_{1}(z) t}\,  \PP_\eta(\T_z>t)=\int_z^\infty \psi_{z,1}(x) \, d\eta(x) 
\int_z^\infty \psi_{z,1}(x)\, 2e^{-(\gamma(x)-\gamma(z))}\, dx.
$$
\item[(ii)] For every Borel set $A\subset [z,\infty]$, we have 
$$
\lim\limits_{t\to \infty} \PP_\eta(X_t\in A \large | \T_z>t)=
\frac{\int_A \psi_{z,1}(x) e^{-\gamma(x)} \, dx}{\int_z^\infty \psi_{z,1}(x) e^{-\gamma(x)} \, dx}.
$$
\end{itemize}
\end{theorem}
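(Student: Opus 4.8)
The plan is to deduce both assertions from the spectral description of the killed diffusion recalled in Appendix C (and in \cite{Cattiaux}), by applying the Markov property at a small fixed time $t_0>0$ so as to reduce to a starting law supported on $(z,\infty)$ and regular enough for the $L^2$ spectral expansion; the only genuinely new ingredient, compared with the case of $\eta$ compactly supported in $(z,\infty)$ (which is Theorem 5.6 in \cite{Cattiaux}), is a regularization estimate for the process started from infinity. Write $P^z_t$ for the sub-Markovian semigroup of $X$ killed at $z$, self-adjoint on $L^2(\mu_z)$ with $\mu_z(dx)=2e^{-(\gamma(x)-\gamma(z))}\,dx$, a finite measure by $\HH_1$. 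From Appendix C its generator has discrete spectrum $-\lambda_1(z)<-\lambda_2(z)\le\cdots$, with $\lambda_1(z)$ simple and orthonormal eigenbasis $(\psi_{z,i})_{i\ge 1}$; under $\HH_2$ the function $\psi_{z,1}$ is moreover bounded on $[z,\infty)$ and has a finite, strictly positive limit at infinity, which I denote $\psi_{z,1}(\infty)$, with the convention that $\int_z^\infty \psi_{z,1}\,d\eta$ stands for $\int_{(z,\infty]}\psi_{z,1}\,d\eta$. Since $\mu_z$ is finite one has, for every Borel $A\subset[z,\infty]$, the $L^2(\mu_z)$ expansion $P^z_t\ind_A=\sum_{i\ge1}e^{-\lambda_i(z)t}\,c_i(A)\,\psi_{z,i}$, where $c_i(A)=\int_A\psi_{z,i}\,d\mu_z$ and $\sum_i c_i(A)^2=\mu_z(A)<\infty$.

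I would next record two facts. First, for every $\xi\in(z,\infty]$ and $t_0>0$,
\[
\EE_\xi\big[\psi_{z,1}(X_{t_0})\,\ind_{\T_z>t_0}\big]=e^{-\lambda_1(z)t_0}\,\psi_{z,1}(\xi),
\]
with the convention $\psi_{z,1}(\infty)$ above. For finite $\xi$ this is optional stopping for the bounded space--time martingale $e^{\lambda_1(z)(t\wedge\T_z)}\psi_{z,1}(X_{t\wedge\T_z})$ together with $\psi_{z,1}(z)=0$; the value at $\xi=\infty$ follows by letting $\xi\uparrow\infty$, using the a.s.\ monotone convergence $X^{\xi}_{t_0}\uparrow X^{\infty}_{t_0}$ and $\T^{(\xi)}_z\uparrow\T^{(\infty)}_z$ from Section \ref{sec:P_infty} (so that $\ind_{\T^{(\xi)}_z>t_0}\uparrow\ind_{\T^{(\infty)}_z>t_0}$ pointwise), the continuity of $\psi_{z,1}$, and bounded convergence.

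Second, and this is the step where the descent from infinity really enters, for every $t_0>0$ the sub-probability measure $\nu^\eta_{t_0}(dx):=\PP_\eta(X_{t_0}\in dx,\ \T_z>t_0)$ is absolutely continuous with respect to $\mu_z$, with a density $g^\eta_{t_0}\in L^2(\mu_z)$. Indeed $\nu^\eta_{t_0}(dx)=\big(\int_{(z,\infty]}p^z_{t_0}(\xi,x)\,\eta(d\xi)\big)\mu_z(dx)$, where $p^z_{t_0}$ is the killed transition density (and $p^z_{t_0}(\infty,\cdot)$ the density of $X^\infty_{t_0}$ on $\{\T_z>t_0\}$, obtained as a limit of the $p^z_{t_0}(\xi,\cdot)$); by Jensen's inequality and the semigroup/self-adjointness identity $\|p^z_{t_0}(\xi,\cdot)\|_{L^2(\mu_z)}^2=p^z_{2t_0}(\xi,\xi)$, the claim follows from $\sup_{\xi\in(z,\infty]}p^z_{2t_0}(\xi,\xi)<\infty$, i.e.\ from the fact that the killed diffusion started from infinity has, at any positive time, a density lying in $L^2(\mu_z)$. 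I expect this uniform heat-kernel bound to be the main obstacle: it is a quantitative form of ``$\infty$ is an entrance boundary'' and should follow either from a Gaussian-type upper bound on the killed kernel combined with the a.s.\ domination $X^{\xi}_t\le X^{\infty}_t$, or from the ultracontractivity estimates already underlying the spectral analysis of Appendix C.

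Granting these, the conclusion is a clean use of the spectral gap. By the Markov property at $t_0$, for every Borel $A\subset[z,\infty]$ and $t>0$,
\[
\PP_\eta(X_{t+t_0}\in A,\ \T_z>t+t_0)=\big\langle g^\eta_{t_0},\,P^z_t\ind_A\big\rangle_{\mu_z}=\sum_{i\ge1}e^{-\lambda_i(z)t}\,c_i(A)\,\big\langle g^\eta_{t_0},\psi_{z,i}\big\rangle_{\mu_z},
\]
and since $\sum_i|c_i(A)\langle g^\eta_{t_0},\psi_{z,i}\rangle_{\mu_z}|\le\|\ind_A\|_{L^2(\mu_z)}\|g^\eta_{t_0}\|_{L^2(\mu_z)}<\infty$ while $\lambda_i(z)\ge\lambda_2(z)>\lambda_1(z)$ for $i\ge2$, multiplying by $e^{\lambda_1(z)(t+t_0)}$ and letting $t\to\infty$ isolates the $i=1$ term; using the first fact to evaluate $\langle g^\eta_{t_0},\psi_{z,1}\rangle_{\mu_z}=\int_{(z,\infty]}\psi_{z,1}\,d\nu^\eta_{t_0}=\EE_\eta[\psi_{z,1}(X_{t_0})\ind_{\T_z>t_0}]=e^{-\lambda_1(z)t_0}\int_z^\infty\psi_{z,1}\,d\eta$, the dependence on $t_0$ cancels and
\[
\lim_{t\to\infty}e^{\lambda_1(z)t}\,\PP_\eta(X_t\in A,\ \T_z>t)=\Big(\int_A\psi_{z,1}(x)\,2e^{-(\gamma(x)-\gamma(z))}\,dx\Big)\Big(\int_z^\infty\psi_{z,1}\,d\eta\Big).
\]
Taking $A=[z,\infty]$ gives assertion $(i)$. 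For $(ii)$, note first that the right-hand side with $A=[z,\infty]$ is strictly positive (because $\psi_{z,1}>0$ on $(z,\infty)$ and $\psi_{z,1}(\infty)>0$), so $\PP_\eta(\T_z>t)>0$ for $t$ large; dividing the identity for a general $A$ by the one for $A=[z,\infty]$, the common factor $\int_z^\infty\psi_{z,1}\,d\eta$ cancels, and since $X_t<\infty$ a.s.\ the numerator only sees $A\cap(z,\infty)$; cancelling the constant $2e^{\gamma(z)}$ yields exactly $\int_A\psi_{z,1}(x)e^{-\gamma(x)}\,dx\big/\int_z^\infty\psi_{z,1}(x)e^{-\gamma(x)}\,dx$.
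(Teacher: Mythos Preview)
Your overall architecture is clean, but the step you flag as ``the main obstacle'' is a genuine gap under the stated hypotheses. You need that the sub-probability $\nu^\eta_{t_0}$ has an $L^2(\mu_z)$ density, which for $\eta=\delta_\infty$ amounts to $p^z_{2t_0}(\infty,\infty)=\sum_{k\ge 1}e^{-2\lambda_k t_0}\psi_{z,k}(\infty)^2<\infty$. The paper establishes exactly this kind of bound only under the \emph{additional} hypothesis $\HH_3$ (Proposition~\ref{pro:bound4r} and Theorem~\ref{densite}), where one controls $\|\psi_k\|_\infty\le A e^{\epsilon\lambda_k}$; under $\HH_1,\HH_2$ alone, Appendix~C gives that each $\psi_{z,k}$ is bounded and that $\lambda_k\ge Ak$, but no uniform-in-$k$ growth control on $\|\psi_{z,k}\|_\infty$. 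Neither of your proposed justifications (Gaussian upper bounds on the killed kernel, or ultracontractivity ``already underlying'' Appendix~C) is available here: the spectral analysis in Appendix~C does not produce ultracontractivity without $\HH_3$, and a Gaussian bound uniform as the starting point tends to infinity is precisely what is at stake. So as written, your argument proves the theorem under $\HH_1,\HH_2,\HH_3$, not under $\HH_1,\HH_2$.

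The paper avoids this regularity issue altogether by conditioning at a \emph{hitting time} rather than at a fixed time. Fix $x>z$ and write, via the strong Markov property at $\T_x$,
\[
\PP_\infty(\T_z>t)=\PP_\infty(\T_x>t)+\int_0^t \PP_x(\T_z>u)\,\PP_\infty(\T_x\in d(t-u)).
\]
Because $\lambda_1(\cdot)$ is strictly increasing (Proposition~\ref{prop:b2}(i)) and $\sup_{w\ge x}\EE_w(e^{\lambda'\T_x})<\infty$ for every $\lambda'<\lambda_1(x)$, the first term and the contribution of $u\le\epsilon t$ are $o(e^{-\lambda_1(z)t})$; on $u\ge\epsilon t$ one inserts the known finite-starting-point asymptotic $e^{\lambda_1(z)u}\PP_x(\T_z>u)\to \psi_{z,1}(x)\int\psi_{z,1}\,d\mu_z$ from \cite{Cattiaux}. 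The limit then reads $\psi_{z,1}(x)\,\EE_\infty(e^{\lambda_1(z)\T_x})\int\psi_{z,1}\,d\mu_z$, and the key identity $\psi_{z,1}(x)\,\EE_\infty(e^{\lambda_1(z)\T_x})=\psi_{z,1}(\infty)$ of Lemma~\ref{lem:0} (your ``first fact'', in Laplace rather than semigroup form) removes the dependence on $x$. This route uses only the strict monotonicity of $\lambda_1(\cdot)$ and exponential moments of $\T_x$ under $\PP_\infty$, both available under $\HH_1,\HH_2$; no $L^2$ control of the kernel at infinity is needed.
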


\medskip
 
 \bi
We now  provide a formula for the density of $\PP(X^\infty_t \in dx,
\T^{(\infty)}_0>t)$ with respect to 
the speed measure $d\mu(x) =2e^{-\gamma(x)} dx$.

\medskip In \cite{Cattiaux} Theorem 2.3,  it is proved that for  any finite $z>0$,   $$\PP(X^y_t\in dx, \T^{(y)}_0>t)=r(t,y,x)\, 2e^{-\gamma(x)} dx$$
where, writing  $\lambda_k=\lambda_{k}(0)$ and $\psi_k=\psi_{0,k}$ for any integer $k\ge 1$,
$$
r(t,y,x)=\sum\limits_{k\ge 1} e^{-\lambda_k t} \psi_k(y) \psi_k(x).
$$
In order to control this sum, we require  an extra hypothesis on
$q$. This hypothesis 
prevents $q$ to have deep valleys when approaching infinity.
\medskip
$$
\HH_3: \qquad \quad  \exists a>0, x_0\ge 0  \text{ such that } \forall  x\ge x_0, \
\inf\{q(y):\, y\ge x\}\ge a\, q(x). 
$$

\medskip 
Of course we have $a\le 1$. Moreover, $a=1$ is equivalent to assume  $q$ is increasing in $[x_0,\infty)$. 
We are in position to show the following characte\-rization of $r(t,\infty,\bullet)$.

\medskip

\begin{theorem}\label{densite} Assume $\HH_1,\HH_2$ and $\HH_3$ hold. Then for all $t>0$,
\begin{itemize}
\item[(i)] as $y$ converges to $\infty$, the function $r(t,y,\bullet)=\sum_k e^{-\lambda_k t} \psi_k(y)\psi_k(\bullet)$ converges
to  $r(t,\infty,\bullet)=\sum_k e^{-\lambda_k t} \psi_k(\infty)\psi_k(\bullet)$, in all $L^p(\mu)$ for $p\ge 1$. 
In particular, $r(t,\infty,\bullet)\in L^p(\mu)$ and it is a bounded continuous function.
\item[(ii)] For any  bounded measurable function $f:\RR_+\to \RR$, we have
\begin{equation}
\label{eq:13}
\EE\left(f(X^\infty_t) ;  \T^{(\infty)}_0>t\right)=\int_0^\infty r(t,\infty,x) f(x) \, 2e^{-\gamma(x)} dx.
\end{equation}
Thus, $r(t,\infty,x)\, 2e^{-\gamma(x)} dx$ is a density for $\PP(X^\infty_t\in \bullet  \, ; \, \T^{(\infty)}_0>t)$.

\item[(iii)] For all $p\ge 1$ and all $f\in L^p$, we have
$$
\lim\limits_{z\to \infty} \EE(f(X^z_t); \T^{(z)}_0>t)=\EE(f(X^\infty_t); \T^{(\infty)}_0>t) =\int_0^\infty  r(t,\infty,x) f(x) \, 2e^{-\gamma(x)} dx.
$$
\end{itemize}
\end{theorem}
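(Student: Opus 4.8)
The crux of Theorem~\ref{densite} is the following \emph{ultracontractivity} estimate for the killed semigroup: for every $t>0$,
$$
C_t:=\sup_{x,y\ge 0}\, r(t,x,y)<\infty .
$$
By the spectral expansion and Cauchy--Schwarz, $r(t,x,y)\le r(t,x,x)^{1/2}r(t,y,y)^{1/2}$, so this is equivalent to $\sup_{x\ge0}r(t,x,x)<\infty$, where $r(t,x,x)=\sum_k e^{-\lambda_k t}\psi_k^2(x)$. Near $0$ there is nothing to do ($\psi_k(0)=0$ and $0$ is regular); the real content is uniformity as $x\to\infty$, and this is precisely where $\HH_3$ enters: without it $q$ could have a deep valley near infinity in which the process lingers and the density blows up. I would establish $C_t<\infty$ along one of two lines — either a Nash--type (equivalently, super-Poincaré) inequality for the Dirichlet form $\mathcal E(f,f)=\tfrac12\int_0^\infty f'(x)^2\,d\mu(x)$ on $L^2(\mu)$, exploiting that $\mu$ is finite by $\HH_1$ and that $\HH_2$ and $\HH_3$ force $e^{-\gamma}$ to decay rapidly and without oscillation near $\infty$; or, more in the probabilistic spirit of the paper, by using that under $\HH_3$ the drift satisfies $q(y)\ge a\,q(x)$ for $y\ge x\ge x_0$, so that from any large starting point the process is pushed down uniformly and, together with $\sup_x\EE_x(\T_0)<\infty$ from Proposition~\ref{pro:0}, returns quickly to a fixed compact set on which standard parabolic estimates give a uniform bound. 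I expect this step to be by far the most demanding. Two consequences will be used repeatedly: $\sum_k e^{-\lambda_k s}=\int_0^\infty r(s,x,x)\,d\mu(x)\le C_s\,\mu(\RR_+)<\infty$ for every $s>0$, and $\sup_x|\psi_k(x)|\le C_s^{1/2}e^{\lambda_k s/2}$ for every $s>0$ and $k$.

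Next I would show that $\psi_k(\infty):=\lim_{x\to\infty}\psi_k(x)$ exists and is finite for each $k$. From $\LL\psi_k=-\lambda_k\psi_k$ one computes $\frac{d}{dx}\bigl(e^{-\gamma(x)}\psi_k'(x)\bigr)=-2\lambda_k\,e^{-\gamma(x)}\psi_k(x)$; since $\psi_k$ is bounded (by the previous paragraph) and $e^{-\gamma}\in L^1(0,\infty)$ ($\mu$ finite), the ``flux'' $e^{-\gamma(x)}\psi_k'(x)$ has a finite limit at $\infty$, which must be $0$ as otherwise $\psi_k'(x)\sim c\,e^{\gamma(x)}$ would make $\psi_k$ unbounded. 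Hence $\psi_k'(x)=2\lambda_k\,e^{\gamma(x)}\int_x^\infty e^{-\gamma(\xi)}\psi_k(\xi)\,d\xi$, and $\int^\infty|\psi_k'(x)|\,dx\le 2\lambda_k\|\psi_k\|_\infty\int^\infty e^{\gamma(y)}\int_y^\infty e^{-\gamma(\xi)}\,d\xi\,dy<\infty$ by $\HH_1$, so $\psi_k$ has a finite limit at $\infty$. Part~$(i)$ then follows: using $|\psi_k(\infty)\psi_k(x)|\le\|\psi_k\|_\infty^2\le C_{t/2}e^{\lambda_k t/2}$, the series $r(t,\infty,x)=\sum_k e^{-\lambda_k t}\psi_k(\infty)\psi_k(x)$ converges absolutely and uniformly on $[0,\infty)$, hence defines a bounded continuous function with $|r(t,y,x)|,|r(t,\infty,x)|\le C_{t/2}\sum_k e^{-\lambda_k t/2}$ uniformly in $y$; since $\psi_k(y)\to\psi_k(\infty)$ for each $k$ while the summands are dominated uniformly in $y$, dominated convergence in $k$ gives $r(t,y,x)\to r(t,\infty,x)$ pointwise, and because the dominating constant lies in $L^p(\mu)$ ($\mu$ finite) a further dominated convergence yields $r(t,y,\bullet)\to r(t,\infty,\bullet)$ in $L^p(\mu)$ for all $p\ge1$, with $r(t,\infty,\bullet)\in L^p(\mu)$.

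For $(ii)$ I would start from $\EE(f(X^y_t);\T^{(y)}_0>t)=\int_0^\infty r(t,y,x)f(x)\,2e^{-\gamma(x)}\,dx$, valid for finite $y$ by Theorem~2.3 of~\cite{Cattiaux}, and let $y\to\infty$ for $f$ bounded continuous. The right side converges to $\int_0^\infty r(t,\infty,x)f(x)\,2e^{-\gamma(x)}\,dx$ by the $L^1(\mu)$-convergence from $(i)$. For the left side, $X^y_t\to X^\infty_t$ a.s.\ uniformly on compacts of $(0,\infty)$ and $\T^{(y)}_0\uparrow\T^{(\infty)}_0$ from Section~\ref{sec:P_infty}; comparing the increasing families $\{\T^{(y)}_0>t\}$ and $\{\T^{(y)}_0\ge t\}$ and using that $\T^{(y)}_0$ has a density for $y>0$ gives $\PP(\T^{(\infty)}_0=t)=0$, whence $f(X^y_t)\ind_{\{\T^{(y)}_0>t\}}\to f(X^\infty_t)\ind_{\{\T^{(\infty)}_0>t\}}$ a.s., and bounded convergence produces the limit $\EE(f(X^\infty_t);\T^{(\infty)}_0>t)$. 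This proves \eqref{eq:13} for bounded continuous $f$, so the finite measures $\PP(X^\infty_t\in\bullet;\T^{(\infty)}_0>t)$ and $r(t,\infty,x)\,2e^{-\gamma(x)}\,dx$ coincide and \eqref{eq:13} extends to all bounded measurable $f$. Finally, for $(iii)$ and $f\in L^p(\mu)$, one has $\EE(f(X^z_t);\T^{(z)}_0>t)=\int_0^\infty r(t,z,x)f(x)\,2e^{-\gamma(x)}\,dx\to\int_0^\infty r(t,\infty,x)f(x)\,2e^{-\gamma(x)}\,dx$ by H\"older and the $L^{p'}(\mu)$-convergence from $(i)$; this limit equals $\EE(f(X^\infty_t);\T^{(\infty)}_0>t)$ because \eqref{eq:13} extends from bounded to $L^p$ functions, both sides being continuous for the $L^p(\mu)$-norm since $r(t,\infty,\bullet)$ is bounded and $\mu$ is finite.
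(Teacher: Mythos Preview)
Your overall strategy is correct and, once the key estimate is in place, your derivation of (i)--(iii) runs essentially the same way as the paper's: uniform convergence of the eigenfunction series gives (i), passing to the limit in the finite-$y$ identity from \cite{Cattiaux} gives (ii), and H\"older plus the $L^{p'}$ convergence gives (iii). The only cosmetic difference in (ii) is that the paper first treats monotone $f$ (using the a.s.\ monotone convergence $X^y\uparrow X^\infty$) rather than your atomlessness argument for $\T_0^{(\infty)}$; both are fine.

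The substantive difference is in how the crux---the uniform eigenfunction bound, equivalently ultracontractivity---is obtained. You frame it as $C_t=\sup_{x,y}r(t,x,y)<\infty$ and propose to reach it through a Nash/super-Poincar\'e inequality or a probabilistic return-time argument, flagging this as the hard step without carrying it out. The paper instead proves the equivalent statement $\|\psi_k\|_\infty\le A(\epsilon)e^{\epsilon\lambda_k}$ directly by an ODE analysis of each eigenfunction (Proposition~\ref{pro:bound4r}): it locates the largest zero $\zeta_k$ of $\psi_k$, shows $\lambda_k=\lambda_1(\zeta_k)$, uses $\HH_3$ to control $\gamma$ on the interval $[\zeta_k,\chi_k]$ where $\chi_k$ is chosen so that $q(\chi_k)\asymp\sqrt{\lambda_k}$, and on $[\chi_k,\infty)$ bounds the logarithmic derivative $\psi_k'/\psi_k$ by a Riccati-type comparison. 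This is combined with the linear lower bound $\lambda_k\ge Ak$ (Proposition~\ref{prop:b2}(iii)). Your route, if completed, would be more conceptual and would likely clarify why $\HH_3$ is the natural hypothesis; the paper's route is longer but fully explicit and self-contained, and makes the role of $\HH_3$ visible at each step of the estimate. As your proposal stands, this is precisely the piece that remains to be written; to close it you would either have to verify the appropriate one-dimensional super-Poincar\'e criterion under $\HH_1$--$\HH_3$ or reproduce the paper's ODE argument.
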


We also have  the following
strong ratio limit result. 

\medskip

\begin{theorem}\label{ratio} Assume $\HH_1, \HH_2$ and $ \HH_3$ hold, then 
$$
\lim\limits_{z\to \infty} \sup\limits_{x\ge 0} \left|\frac{r(t,\infty,x)}{r(t,z,x)}-1\right|=0.
$$
\end{theorem}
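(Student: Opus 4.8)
Here is the plan I would follow.

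The overall idea is to get the uniform ratio bound from one \emph{multiplicative} comparison, obtained by splitting time at $t/2$ via Chapman--Kolmogorov: the "mixing" kernel $r(t/2,w,x)$ integrates out to the survival probability $\PP_x(\T_0>t/2)$, whose decay as $x\downarrow 0$ and $x\uparrow\infty$ matches that of $r(t,\infty,\cdot)$. Write $m_k:=\|\psi_k\|_\infty$. The first step I would carry out is a quantitative version of $\psi_k(z)\to\psi_k(\infty)$: using $e^{-\gamma}$ as integrating factor in $\LL\psi_k=-\lambda_k\psi_k$ gives $(e^{-\gamma}\psi_k')'=-2\lambda_k e^{-\gamma}\psi_k$, and integrating, letting the upper endpoint go to $\infty$ (the boundary term vanishes since $\psi_k$ is bounded while $\gamma\to\infty$ under $\HH_2$, so $e^{-\gamma}\psi_k'\to0$), yields $\psi_k'(y)=2\lambda_k e^{\gamma(y)}\int_y^\infty e^{-\gamma(\xi)}\psi_k(\xi)\,d\xi$; a second integration over $[y,\infty)$, the bound $|\psi_k|\le m_k$, and the definition \eqref{lyapounov} of $\m$ give
$$|\psi_k(\infty)-\psi_k(y)|\le \lambda_k\,m_k\,\m(y),\qquad y\ge0,\ k\ge1 .$$
(The same computation with the integral run from $0$ gives $|\psi_k(y)|\le C\lambda_k m_k\,y$ and $|\psi_k'(y)|\le C\lambda_k m_k$ for $y$ in a fixed neighbourhood of $0$, with $C$ depending only on $q$; these are used at the end.) Since $\m$ decreases to $0$ and, by the bounds on $\lambda_k$ and $m_k$ from the proof of Theorem~\ref{densite} (which is where $\HH_3$ enters), $S(t):=\sum_k\lambda_k e^{-\lambda_k t}m_k^2<\infty$ for every $t>0$, this gives at once $\sup_{x\ge0}|r(t,\infty,x)-r(t,z,x)|\le S(t)\,\m(z)\to0$ as $z\to\infty$.

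Next I would use Chapman--Kolmogorov, $r(t,y,x)=\int_0^\infty r(t/2,y,w)\,r(t/2,w,x)\,d\mu(w)$, valid for finite $y$ by \cite{Cattiaux} and extending to $y=\infty$ by letting $y\to\infty$ (with $r(t/2,y,\cdot)\to r(t/2,\infty,\cdot)$ in $L^2(\mu)$ and $r(t/2,\cdot,x)\in L^2(\mu)$ from Theorem~\ref{densite}). Subtracting the identities for $y=\infty$ and $y=z$ and inserting the uniform bound at time $t/2$,
$$|r(t,\infty,x)-r(t,z,x)|\le S(t/2)\,\m(z)\int_0^\infty r(t/2,w,x)\,d\mu(w)=S(t/2)\,\m(z)\;\PP_x(\T_0>t/2),$$
using symmetry of $r$ and $\int_0^\infty r(t/2,x,w)\,d\mu(w)=\PP_x(\T_0>t/2)$. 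Dividing by $r(t,\infty,x)$, which is positive on $(0,\infty)$ (it equals $\int r(t/2,\infty,w)r(t/2,w,x)\,d\mu(w)$, whose integrand is $>0$ on a set of positive $\mu$-measure), this reads $\sup_{x>0}|r(t,z,x)/r(t,\infty,x)-1|\le\theta(z)$ with $\theta(z):=\m(z)\,S(t/2)\,B(t)$ and $B(t):=\sup_{x>0}\PP_x(\T_0>t/2)/r(t,\infty,x)$. Provided $B(t)<\infty$ we get $\theta(z)\to0$, hence for $z$ large $r(t,z,x)\ge(1-\theta(z))r(t,\infty,x)>0$ and, uniformly in $x>0$,
$$\Bigl|\frac{r(t,\infty,x)}{r(t,z,x)}-1\Bigr|=\frac{|r(t,\infty,x)-r(t,z,x)|}{r(t,z,x)}\le\frac{\theta(z)}{1-\theta(z)}\longrightarrow0\quad(z\to\infty),$$
which is the claim (at $x=0$ both densities vanish and the ratio, read off by continuity as $\partial_x r(t,\infty,0)/\partial_x r(t,z,0)$, also tends to $1$).

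So everything reduces to $B(t)<\infty$, which I would prove in three regimes. For large $x$: the series $r(t,\infty,x)=\sum_k e^{-\lambda_k t}\psi_k(\infty)\psi_k(x)$ converges uniformly (dominated by $\sum_k e^{-\lambda_k t}m_k^2\le S(t)/\lambda_1$), so $r(t,\infty,x)\to\sum_k e^{-\lambda_k t}\psi_k(\infty)^2\ge e^{-\lambda_1 t}\psi_1(\infty)^2>0$ as $x\to\infty$ (here $\psi_1$ is strictly increasing with $\psi_1(\infty)>0$, by the formula for $\psi_1'$ above, and $\lambda_1>0$), while $\PP_x(\T_0>t/2)\le1$; so the ratio is bounded on $[x_1,\infty)$ for $x_1$ large. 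On a compact $[x_0,x_1]\subset(0,\infty)$, $r(t,\infty,\cdot)$ is continuous and strictly positive (Theorem~\ref{densite}), so the ratio is bounded there. Near $0$ I would use, on one hand, $\PP_x(\T_0>t/2)\le\frac2t\,\EE_x(\T_0)=\frac2t(\m(0)-\m(x))\le\frac2t\|\m'\|_\infty\,x$, i.e.\ $O(x)$ (Markov's inequality, the mean hitting time formula, and $\m$ Lipschitz); and, on the other hand, a lower bound $r(t,\infty,x)\ge c\,x$: the series for $r(t,\infty,\cdot)$ and its $x$-derivatives of order $\le2$ converge uniformly near $0$ (using $|\psi_k'(x)|\le C\lambda_k m_k$ and $|\psi_k''(x)|=|2q\psi_k'-2\lambda_k\psi_k|\le C\lambda_k m_k$ there, both dominated by a multiple of $S(t)$), so $(s,x)\mapsto r(s,\infty,x)$ is a classical, strictly positive solution of $\partial_s u=\LL_x u$ on $(0,\infty)\times(0,\varepsilon)$ vanishing on $\{x=0\}$, and the parabolic Hopf lemma gives $\partial_x r(t,\infty,0)>0$, whence $r(t,\infty,x)\ge cx$ for $x$ small. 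Combining the three regimes gives $B(t)<\infty$. I expect this last estimate to be the delicate point: it is exactly the statement that $r(t,\infty,\cdot)$ is "as regular as a survival density" at both endpoints, resting on the boundary (Hopf) regularity at $0$ and on the non-degeneracy $r(t,\infty,\infty)>0$ at $\infty$, whose proof funnels through the $\HH_3$-based bounds guaranteeing $S(t)<\infty$; the two displays of the first two paragraphs are then essentially bookkeeping once the eigenfunction estimate is in hand.
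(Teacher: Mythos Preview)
Your proof is correct and takes a genuinely different route from the paper's.  Both arguments begin with the same eigenfunction gradient estimate (your $|\psi_k(\infty)-\psi_k(y)|\le \lambda_k m_k\,\m(y)$ is equivalent to the paper's Corollary~\ref{cor:2}, since $\m(z)\sim\int_z^\infty 1/q$ under $\HH_1,\HH_2$), and both ultimately rely on $\partial_x r(t,\infty,0)>0$ to handle $x$ near~$0$.  The difference is in how the \emph{lower} bound on the denominator is obtained.  The paper bounds $r(t,w,x)$ from below directly: for $x\ge y_0$ it uses the positive infimum $\mathscr{H}(z_0,y_0)=\inf_{[z_0,\infty]\times[y_0,\infty]}r(t,\cdot,\cdot)$ (via the maximum principle), and for $0\le x\le y_0$ it passes to the $x$-derivative through Cauchy's mean value theorem, $r(t,\infty,x)/r(t,w,x)=\partial_x r(t,\infty,\xi)/\partial_x r(t,w,\xi)$, and controls the derivative ratio with the explicit modulus of continuity of $\partial_x r$.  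Your approach replaces this two-regime lower bound by the single Chapman--Kolmogorov identity at time $t/2$, which converts the additive bound $|r(t,\infty,x)-r(t,z,x)|\le S(t/2)\m(z)$ (uniform in $x$) into the multiplicative one $\le S(t/2)\m(z)\,\PP_x(\T_0>t/2)$; the problem then becomes showing that $\PP_x(\T_0>t/2)$ and $r(t,\infty,x)$ have comparable boundary behaviour, which you do cleanly at both endpoints.  This is arguably more conceptual and avoids the somewhat delicate derivative bookkeeping of the paper; it also yields the same quantitative rate $O(\m(z))$ stated in the Remark after the paper's proof.  One minor point: the paper obtains $\partial_x r(t,\infty,0)>0$ by identifying it with the density of $\T_0^{(\infty)}$ and using Chapman--Kolmogorov at an intermediate time, whereas you invoke the parabolic Hopf lemma; both work, but yours requires the $C^2$ regularity of $r(t,\infty,\cdot)$ near $0$ that you sketched and should perhaps make fully explicit.
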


\bi
\section{The moments of $\T_z$} 
%A COMPARER AU CAS DISCRET?
\label{sec:moments}

\me Under $\HH_1$,
we shall derive a recurrence formula for the moments $\EE_x(\T_z^n)$, for all $n\ge 1$ and all $x\ge z$. 
We also develop some useful bounds for the 
moments. It is clear that, outside of a global set of $\PP$-measure $0$, the flow $X^x$ is well defined and it is
increasing in $x$, by strong uniqueness and continuity. Then for any $z$,  $\T^{(x)}_z$ is also strictly increasing in $x$, for $x\ge z$.

\medskip
\begin{theorem} 
\label{mom} Assume $\HH_1$.  

(i) We have for all $x\ge z$,
\be
\label{moment}\EE_x(\T_z) = 2\int_z^x e^{\gamma(y)}\,dy \int_y^\infty e^{-\gamma(\xi)} d\xi\ee
and the  recurrence formula: for any $n\geq 1$,
\begin{equation}
\label{for:0}
\EE_x(\T_z^n) =  2n \int_z^x e^{\gamma(y)} \int_y^\infty \EE_\xi(\T_z^{n-1}) e^{-\gamma(\xi)} d\xi.
\end{equation}
In particular, 
$$\EE_\infty(\T_z) = \m(z).$$
(ii) For all $\lambda< \frac{1}{\m(z)}$, for all $x\ge z$,
\be
\label{moment-expo}\EE_x(e^{\lambda \T_z})\le \frac{1}{1-\lambda\m(z)}.\ee
\end{theorem}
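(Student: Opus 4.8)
The plan is to obtain \eqref{moment} by identifying $\EE_x(\T_z)$ with an explicit classical solution of $\LL u=-1$, then to bootstrap to all integer moments by an induction on $n$ that simultaneously delivers the bound $\EE_x(\T_z^n)\le n!\,\m(z)^n$, and finally to deduce \eqref{moment-expo} by summing the Taylor series of the exponential.

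\emph{First moment.} Put $f(x):=2\int_z^x e^{\gamma(y)}\int_y^\infty e^{-\gamma(\xi)}\,d\xi\,dy$. Under $\HH_1$ the inner integral is finite for every $y$, so $f$ is well defined, $f(x)=\m(z)-\m(x)\in[0,\m(z)]$ for $x\ge z$, and a double differentiation using $\gamma'=2q$ shows $f\in\mathcal{C}^2([z,\infty))$, $f(z)=0$ and $\LL f=-1$. Let $\sigma_N:=\inf\{s\ge0:X_s\ge N\}$; since $X$ does not explode, $\sigma_N\to\infty$ a.s.\ as $N\to\infty$. Applying It\^o's formula to $f(X_{t\wedge\T_z\wedge\sigma_N})$, the drift part contributes $-(t\wedge\T_z\wedge\sigma_N)$ and the martingale part $\int_0^{t\wedge\T_z\wedge\sigma_N}f'(X_s)\,dB_s$ is a genuine martingale because $f'$ is continuous, hence bounded, on the compact $[z,N]$. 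Taking expectations,
$$\EE_x\big[f(X_{t\wedge\T_z\wedge\sigma_N})\big]+\EE_x[t\wedge\T_z\wedge\sigma_N]=f(x).$$
Letting $N\to\infty$ (dominated convergence, first term bounded by $\m(z)$, second by $t$) and then $t\to\infty$ (using $\T_z\le\T_0<\infty$ a.s.\ so $X_{t\wedge\T_z}\to z$ and $f(z)=0$, and monotone convergence for $\EE_x[t\wedge\T_z]\uparrow\EE_x(\T_z)$) yields $\EE_x(\T_z)=f(x)$, i.e.\ \eqref{moment}. Since $\T^{(x)}_z\uparrow\T^{(\infty)}_z$ and $\m(x)\to0$ as $x\to\infty$, monotone convergence gives $\EE_\infty(\T_z)=\m(z)$.

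\emph{Higher moments.} Argue by induction on $n$, the cases $n=0,1$ being settled above. Assume $\EE_x(\T_z^{n-1})=h_{n-1}(x)$ for all $x\ge z$ with $0\le h_{n-1}\le(n-1)!\,\m(z)^{n-1}$, and set $h_n(x):=2n\int_z^x e^{\gamma(y)}\int_y^\infty h_{n-1}(\xi)e^{-\gamma(\xi)}\,d\xi\,dy$. Because $h_{n-1}$ is bounded and $\mu$ is finite under $\HH_1$, $h_n$ is finite; replacing $h_{n-1}$ by its bound gives $h_n(x)\le n!\,\m(z)^{n-1}(\m(z)-\m(x))\le n!\,\m(z)^n$. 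As before, $h_n\in\mathcal{C}^2([z,\infty))$, $h_n(z)=0$ and $\LL h_n=-n\,h_{n-1}$. Dynkin's formula applied to $h_n(X_{t\wedge\T_z\wedge\sigma_N})$ and the same two limits $N\to\infty$, $t\to\infty$ give $h_n(x)=n\,\EE_x\big[\int_0^{\T_z}h_{n-1}(X_s)\,ds\big]$. To identify the right-hand side, note that by the induction hypothesis and the strong Markov property, on $\{s<\T_z\}$ one has $\T_z=s+\T_z\circ\theta_s$, hence $h_{n-1}(X_s)=\EE_{X_s}(\T_z^{n-1})=\EE_x\big[(\T_z-s)^{n-1}\mid\mathscr{F}_s\big]$; since $\{s<\T_z\}\in\mathscr{F}_s$, Tonelli's theorem gives
$$\EE_x\Big[\int_0^{\T_z}h_{n-1}(X_s)\,ds\Big]=\int_0^\infty\EE_x\big[\ind_{\{s<\T_z\}}(\T_z-s)^{n-1}\big]\,ds=\EE_x\Big[\int_0^{\T_z}(\T_z-s)^{n-1}\,ds\Big]=\frac1n\,\EE_x(\T_z^n).$$
Therefore $\EE_x(\T_z^n)=h_n(x)$, which is \eqref{for:0}, and $\EE_x(\T_z^n)\le n!\,\m(z)^n$, closing the induction.

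\emph{Exponential moment and main difficulty.} For $0\le\lambda<1/\m(z)$, interchanging sum and expectation (all terms nonnegative) and using the moment bound,
$$\EE_x(e^{\lambda\T_z})=\sum_{n\ge0}\frac{\lambda^n}{n!}\EE_x(\T_z^n)\le\sum_{n\ge0}\big(\lambda\,\m(z)\big)^n=\frac1{1-\lambda\,\m(z)},$$
which is \eqref{moment-expo}. The main obstacle lies in the middle step: one must produce the a priori bound $\EE_x(\T_z^n)\le n!\,\m(z)^n$ directly from the explicit integral form of $h_n$ (so that the induction is nonvacuous and every integral converges under $\HH_1$ alone), justify the martingale/Dynkin manipulations — the role of the localizing times $\sigma_N$ — and carry out the strong-Markov identification converting $n\,\EE_x[\int_0^{\T_z}h_{n-1}(X_s)\,ds]$ into $\EE_x(\T_z^n)$.
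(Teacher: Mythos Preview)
Your proof is correct and follows essentially the same strategy as the paper: construct explicit bounded $\mathcal{C}^2$ solutions of $\LL u_n=-n\,u_{n-1}$ vanishing at $z$, identify them with moments via It\^o/Dynkin and the strong Markov property, and derive the geometric bound $\EE_x(\T_z^n)\le n!\,\m(z)^n$ to sum the exponential series. The only cosmetic differences are that you localize with $\sigma_N$ while the paper invokes boundedness of $u_n$ directly, and you compute $\int_0^{\T_z}(\T_z-s)^{n-1}ds=\T_z^n/n$ in one stroke where the paper expands binomially; both are fine.
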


\medskip
\begin{proof}
(i) We develop a proof by induction. 

\noindent {\it Case n=1}. Consider  for any fixed $z\geq 0$ the following function
$$
u_1(x)=2\int_z^x e^{\gamma(y)} \int_y^\infty e^{-\gamma(\xi)} d\xi,
$$
which is  a $\mathcal{C}^2([0,\infty))$  positive increasing and bounded function, solution of $$
\LL u =-1, \; u(z)=0.
$$

By It\^o's formula we get
$$
u_{1}(x)-\EE_x(t\wedge \T_z)=\EE_x(u_{1}(X_{t\wedge \T_z}))=\EE_x(u_{1}(X_t)\1_{ t<\T_z}),
$$
because $u_{1}(z)=0$. The boundedness  of $u_{1}$ implies  that $\T_z$ is finite $a.s.$ and the 
  Monotone Convergence Theorem gives
$$
\EE_x(\T_z)=u_1(x)=2\int_z^x e^{\gamma(y)} \int_y^\infty e^{-\gamma(\xi)} d\xi.
$$

\medskip
\noindent {\it Case n=2}. Consider now the following function
$$
u_2(x)=2\int_z^x e^{\gamma(y)} \int_y^\infty u_1(\xi) e^{-\gamma(\xi)} d\xi,
$$
which is a $\mathcal{C}^2([0,\infty))$, positive, increasing and bounded solution to 
$\,
\LL u=-u_1,\; u(z)=0$.
 It\^o's formula gives $$
u_{2}(x)-\EE_x\left(\int_0^{t\wedge \T_z} u_1(X_s) ds\right)=\EE_x(u_{2}(X_t), t<\T_z),
$$
which converges to $0$ as $t\rightarrow \infty$ as before. Then, using $u_1(x)=\EE_x(\T_z)$,
$$u_2(x)=\int_0^{\infty } \EE_x\left(\EE_{X_s}(\T_z) \1_{s<\T_z} \right)ds=\EE_x\left(\int_0^{\T_z} (\T_z-s)\, ds\right)=\frac12\EE_x(\T_z^2)$$
since 
the strong Markov property implies that  $\EE_x(\EE_{X_s}( \T_z)\,\1_{ s<\T_z})=\EE_x((\T_z -s) \,\1_{ s<\T_z})$.
%which gives
%$$
%u_{2}(x)=
%.$$
In summary, we get
$$
\EE_x(\T_z^2)=2u_2(x)%=4\int_z^x e^{\gamma(y)} \int_y^\infty u_1(\xi) e^{-\gamma(\xi)}d\xi \,dy
=4\int_z^x e^{\gamma(y)} \int_y^\infty \EE_\xi(\T_z) e^{-\gamma(\xi)}d\xi \,dy.
$$

\noindent {\it Case n}.
Consider $u_n(x)=2(n-1)\int_z^x e^{\gamma(y)} \int_y^\infty u_{n-1}(\xi) e^{-\gamma(\xi)} d\xi \,dy$
which is a $\mathcal{C}^2([0,\infty))$, positive, increasing and bounded solution of
$\,
\LL u_n =-(n-1)u_{n-1} ,\; u_n(z)=0$.
The inductive step assumes that $(n-1)u_{n-1}(x)=\EE_x(\T_z^{n-1})$.
Once again, It\^o's formula (and the boundedness assumptions) implies that 
$\,
u_n(x)=\int_0^\infty \EE_x(\EE_{X_s} (\T_z^{n-1})\1_{s<\T_z} )\,ds=\int_0^\infty\EE_x\big((\T_z-s)^{n-1}\,\1_{s<\T_z}\big)\,ds$.
By binomial expansion, we get %We now use that 
%$\EE_x\big((\T_z-s)^{n-1}\,\1_{s<\T_z}\big)=\sum\limits_{k=0}^{n-1} {n-1 \choose k} \EE_{x}( \T_z^k \,\1_{s<\T_z}) (-s)^{n-1-k}$
%and   get
$$
\begin{array}{ll}
u_n(x)&=\sum\limits_{k=0}^{n-1}  {n-1 \choose k} (-1)^{n-1-k} \, \EE_x\left(\T_z^k  \int_0^{\T_z}  s^{n-1-k} \, ds\right)\\
\\
&=\sum\limits_{k=0}^{n-1}  {n-1 \choose k} \frac{(-1)^{n-1-k}}{n-k} \EE_x(\T_z^n)=
-\frac1n \EE_x(\T_z^n) \sum\limits_{k=0}^{n-1}  {n \choose k} (-1)^{n-k}=\frac1n \EE_x(\T_z^n).
\end{array}
$$
We get for $n\ge 1$
\begin{eqnarray*}
\begin{array}{ll}
\EE_x(\T_z^n)&\hspace{-0.2cm}=nu_n(x)=2n(n-1) \int_z^x e^{\gamma(y)} \int_y^\infty u_{n-1}(\xi) e^{-\gamma(\xi)} d\xi dy\\
&\hspace{-0.2cm}=2n \int_z^x e^{\gamma(y)} \int_y^\infty \EE_\xi(\T_z^{n-1}) e^{-\gamma(\xi)} d\xi dy.
\end{array}
\end{eqnarray*}

\me
(ii) Notice that $\EE_x(\T_z)=u_1(x)< \m(z)$, see \eqref{lyapounov}. Then, we get
that $\EE_x(\T_z^2)\le 2 \m(z)^2$ and inductively we get for $n\ge 1$
$\,
\EE_x(\T_z^n)\le n! \m(z)^n$.
With these bounds we can prove that
$$
\EE_x(e^{\lambda \T_z})=1+\sum\limits_{n=1}^\infty \frac{\lambda^n}{n!}\EE_x(\T_z^n)\le 
1+\sum\limits_{n=1}^\infty (\lambda\m(z))^n\le \frac{1}{1-\lambda\m(z)},
$$
which is finite for all $\lambda< \frac{1}{\m(z)}$. Using the Monotone Convergence Theorem and the previous induction formula, we also conclude that $$
\EE_x(e^{\lambda \T_z})=1+2\lambda\int_z^x e^{\gamma(y)}\int_y^\infty \EE_\xi(e^{\lambda \T_z}) e^{-\gamma(\xi)}d\xi\,dy,$$
which ends the proof.
\end{proof}

\bi{\bf Notation:}  
Some of the computations require to prove a chain of implications about the existence of limits and for that
reason we write $f \rightsquigarrow g$ to mean that 
when $\lim\limits_{z\to \infty} g(z)$ exists then
$\lim\limits_{z\to \infty} f(z)=\lim\limits_{z\to \infty} g(z)$. 

\begin{corollary}
\label{moment2}
Assume $\HH_{1}$ and $\HH_{2}$, then 
\begin{equation}
\label{eq:1}
\lim\limits_{z\to \infty} \frac{\EE_\infty(\T^2_z)}{(\EE_\infty(\T_z))^2}=1.
\end{equation}
\end{corollary}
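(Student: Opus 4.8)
The plan is to show that both the numerator and denominator are asymptotically governed by $M(z)=\int_z^\infty q(y)^{-1}\,dy$, so that their ratio tends to $1$. From Theorem \ref{mom}(i) we have $\EE_\infty(\T_z)=\m(z)$ and, by iterating the recurrence \eqref{for:0} with $x=\infty$,
\[
\EE_\infty(\T_z^2)=4\int_z^\infty e^{\gamma(y)}\int_y^\infty \EE_\xi(\T_z)\,e^{-\gamma(\xi)}\,d\xi\,dy,
\]
where $\EE_\xi(\T_z)=u_1(\xi)\le \m(z)$ is itself an explicit double integral. The key analytic input is the asymptotic relation stated just before \eqref{convint}: under $\HH_1$ and $\HH_2$, $2q(y)e^{\gamma(y)}\int_y^\infty e^{-\gamma(\xi)}\,d\xi\to 1$ as $y\to\infty$ (Corollary \ref{cor:1}), together with \eqref{eq:m_M}, i.e. $\m(z)/M(z)\to 1$.

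First I would establish the lower bound $\liminf_{z\to\infty}\EE_\infty(\T_z^2)/\m(z)^2\ge 1$, which is essentially free: by Jensen's inequality (or Cauchy–Schwarz) $\EE_\infty(\T_z^2)\ge(\EE_\infty(\T_z))^2=\m(z)^2$, so in fact the ratio in \eqref{eq:1} is always $\ge 1$ and only the upper bound $\limsup\le 1$ needs work. For the upper bound, the strategy is to show that the dominant contribution to the double integral defining $\EE_\infty(\T_z^2)$ comes from the region where $\xi$ is large, where $\EE_\xi(\T_z)$ is close to $\m(z)$ only when $\xi$ is close to $z$; more precisely one wants $\EE_\infty(\T_z^2)\le 2\,\m(z)^2\,(1+o(1))$ refined to $(1+o(1))\,\m(z)^2$. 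Concretely, I would split $\EE_\xi(\T_z)=\m(z)-\big(\m(z)-u_1(\xi)\big)$ and note that $\m(z)-u_1(\xi)=2\int_\xi^\infty e^{\gamma(y)}\int_y^\infty e^{-\gamma(\zeta)}\,d\zeta\,dy=\m(\xi)$; then
\[
\EE_\infty(\T_z^2)=4\int_z^\infty e^{\gamma(y)}\int_y^\infty\big(\m(z)-\m(\xi)\big)e^{-\gamma(\xi)}\,d\xi\,dy
=2\m(z)^2-4\int_z^\infty e^{\gamma(y)}\int_y^\infty \m(\xi)\,e^{-\gamma(\xi)}\,d\xi\,dy,
\]
using $2\int_z^\infty e^{\gamma(y)}\int_y^\infty e^{-\gamma(\xi)}\,d\xi\,dy=\m(z)$ for the first term. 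Thus it suffices to prove
\[
\lim\limits_{z\to\infty}\frac{4\int_z^\infty e^{\gamma(y)}\int_y^\infty \m(\xi)\,e^{-\gamma(\xi)}\,d\xi\,dy}{\m(z)^2}=\frac12 .
\]
Here I would use $\m(\xi)\sim M(\xi)\sim 1/(2q(\xi)\,e^{\gamma(\xi)}\int_\xi^\infty e^{-\gamma})$ together with the Corollary \ref{cor:1} asymptotics to replace $\m(\xi)e^{-\gamma(\xi)}$ by $\tfrac12 q(\xi)^{-1}e^{-\gamma(\xi)}\cdot\big(2q(\xi)e^{\gamma(\xi)}\int_\xi^\infty e^{-\gamma}\big)^{-1}\cdot(\dots)$ — more cleanly, integrate by parts using $\m'(\xi)=-2e^{\gamma(\xi)}\int_\xi^\infty e^{-\gamma}$, so that $\int_y^\infty \m(\xi)e^{-\gamma(\xi)}\,d\xi$ and $\int_z^\infty e^{\gamma(y)}(\cdots)\,dy$ telescope against powers of $\m$; an application of l'Hôpital's rule in $z$ (differentiating numerator and denominator, using $\frac{d}{dz}\m(z)^2=2\m(z)\m'(z)$ and the fundamental theorem of calculus on the outer integral) reduces the limit to the already-known relation \eqref{eq:m_M} and the Corollary \ref{cor:1} equivalence, yielding the constant $1/2$.

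The main obstacle I anticipate is the justification of the l'Hôpital / integration-by-parts step: one must check the relevant functions are eventually monotone and that the derivative quotient genuinely converges (not merely its liminf/limsup), which is where hypothesis $\HH_2$ — specifically $q'(x)/q^2(x)\to 0$, equivalently $M'$ behaves regularly — does the real work via Corollary \ref{cor:1}. A secondary technical point is handling the inner behaviour for $\xi$ near $z$ (where $\m(\xi)$ is not small), but since that region contributes the factor that produces exactly the missing $1/2$ and is controlled by the same asymptotics, it is bookkeeping rather than a genuine difficulty. Once \eqref{eq:1} is in hand, the lower bound from Jensen closes the argument.
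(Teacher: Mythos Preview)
Your approach is essentially the paper's own: the same decomposition
\[
\EE_\infty(\T_z^2)=2\m(z)^2-4\int_z^\infty e^{\gamma(y)}\int_y^\infty \m(\xi)\,e^{-\gamma(\xi)}\,d\xi\,dy,
\]
followed by l'H\^opital's rule and Corollary~\ref{cor:1}. The Jensen lower bound is harmless but unnecessary, since the exact limit is computed directly.

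There is, however, an arithmetic slip. From the displayed identity you need
\[
\lim_{z\to\infty}\frac{4\int_z^\infty e^{\gamma(y)}\int_y^\infty \m(\xi)\,e^{-\gamma(\xi)}\,d\xi\,dy}{\m(z)^2}=1,
\]
not $\tfrac12$: if this ratio tended to $\tfrac12$ then $\EE_\infty(\T_z^2)/\m(z)^2\to 2-\tfrac12=\tfrac32$, contradicting the very conclusion you are after (and your own Jensen bound). The paper obtains the value $1$ by two applications of l'H\^opital: first reducing to
\[
\frac{\int_z^\infty \m(\xi)e^{-\gamma(\xi)}\,d\xi}{\m(z)\int_z^\infty e^{-\gamma(\xi)}\,d\xi},
\]
then to
\[
\frac{\m(z)e^{-\gamma(z)}}{\m(z)e^{-\gamma(z)}+2e^{\gamma(z)}\bigl(\int_z^\infty e^{-\gamma(\xi)}\,d\xi\bigr)^2},
\]
and finally using Corollary~\ref{cor:1}(i) to show $e^{2\gamma(z)}\bigl(\int_z^\infty e^{-\gamma}\bigr)^2/\m(z)\to 0$, whence the last ratio tends to $1$. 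Your sketch of the l'H\^opital step is on the right track but vague; carrying it out would have revealed the correct constant.
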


\begin{proof}
Let us introduce $\ \m_2(z)=\EE_\infty(\T^2_z)$. By  \eqref{for:0} and Markov property, we have
\begin{align*}
\m_2(z)&=4\int_z^\infty e^{\gamma(y)} \int_y^\infty \EE_\xi(\T_z) e^{-\gamma(\xi)} \,d\xi\, dy\\
&=4\int_z^\infty e^{\gamma(y)} \int_y^\infty (\EE_\infty(\T_z)- \EE_\infty(\T_\xi)) e^{-\gamma(\xi)} \,d\xi\, dy\\
&=2\m(z)^2-4\int_z^\infty e^{\gamma(y)} \int_y^\infty \EE_\infty(\T_\xi) e^{-\gamma(\xi)} \,d\xi\, dy.
\end{align*}
by recalling that  $\m(z)=\EE_\infty(\T_z)=
2\int_z^\infty e^{\gamma(y)} \int_y^\infty e^{-\gamma(\xi)} \,d\xi\, dy$. The asymptotic behavior of
$\m_2(z)=\EE_\infty(\T_z^2)$ turns out to be the same as $ \m^2(z)$. For this purpose, it is enough to study the ratio
$$
\frac{4\int_z^\infty e^{\gamma(y)} \int_y^\infty \m(\xi) e^{-\gamma(\xi)} \,d\xi\, dy}{\m^2(z)},
$$
which by l'H\^opital's rule is equivalent to study the ratio $\frac{\int_z^\infty \m(\xi) e^{-\gamma(\xi)} \,d\xi}
{\m(z) \int_z^\infty e^{-\gamma(\xi)} \,d\xi}$. 
We iterate this argument to get
$$
\frac{\int_z^\infty \m(\xi) e^{-\gamma(\xi)} \,d\xi}
{\m(z) \int_z^\infty e^{-\gamma(\xi)} \,d\xi} \rightsquigarrow
\frac{\m(z) e^{-\gamma(z)} }
{\m(z) e^{-\gamma(z)} +2e^{\gamma(z)} \left(\int_z^\infty e^{-\gamma(\xi)} \,d\xi\right)^2}.
$$
Thanks to Corollary \ref{cor:1} (i) (whose proof uses $\HH_{2}$), we have
$$
\frac{e^{2\gamma(z)} \left( \int_z^\infty e^{-\gamma(\xi)}\, d\xi\right)^2}{\m(z)}  
\rightsquigarrow 1-2\q(z) e^{\gamma(z)} \int_z^\infty e^{-\gamma(\xi)}\, d\xi, 
$$
which converges to $0$. Combining these formulas  yields  \eqref{eq:1}.
\end{proof}

\bi
We are now able to prove Proposition \ref{pro:0}.

\begin{proof}[Proof of Proposition \ref{pro:0}]
The equivalences of $(i)$, $(ii)$ and $(iv)$ are proved in \cite[Proposition 7.6]{Cattiaux}. Remark that $(i)\Leftrightarrow (ii)$  means that $\m(0)<\infty$. 
We have shown in Theorem \ref{mom} (\ref{moment-expo}) that if $\lambda<\m(0)$, we have  
$$
\EE_\infty(e^{\lambda \T_0})=\sup_{x\ge 0} \EE_x(e^{\lambda \T_0})\le \frac{1}{1-\lambda\m(0)}<\infty.
$$ Then $(ii)\Rightarrow (iii)$
 and the converse is obvious. 
\end{proof}

\bi
The  next proposition gives a more general recurrence result. 

\noindent 
\begin{proposition}
\label{integ-f} Assume that $f:\RR_+\to \CC$
is a $\mathcal{C}^1$ function such that $|f'(x)|\le A+B e^{\lambda x}$, where $\lambda< \frac{1}{\m(z)}$, then
for all $x\ge z$ we have
\begin{equation}
\label{eq:general}
\hbox{         }\EE_x(f(\T_z))=f(0)+2\int_z^x e^{\gamma(y)} \int_y^\infty \EE_\xi(f'(\T_z)) \, e^{-\gamma(\xi)} \, d\xi dy.
\end{equation}
\end{proposition}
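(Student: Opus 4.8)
The plan is to reduce the claimed identity to the occupation‑time formula that is, in disguised form, already contained in Theorem~\ref{mom}. Write $h(\xi):=\EE_\xi(f'(\T_z))$ for $\xi\ge z$. Since $\m$ is decreasing, for $\xi\ge z$ one has $\lambda<\tfrac1{\m(z)}\le\tfrac1{\m(\xi)}$, so \eqref{moment-expo} gives $\sup_{\xi\ge z}\EE_\xi(e^{\lambda\T_z})\le(1-\lambda\m(z))^{-1}$ (the case $\lambda\le 0$ being trivial, as then $e^{\lambda\T_z}\le 1$); combined with $|f'(\cdot)|\le A+Be^{\lambda\cdot}$ this shows that $h$ is bounded on $[z,\infty)$. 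Consequently
$$
u(x):=2\int_z^x e^{\gamma(y)}\int_y^\infty h(\xi)\,e^{-\gamma(\xi)}\,d\xi\,dy
$$
is a well‑defined bounded $\mathcal C^2$ function solving $\LL u=-h$, $u(z)=0$, exactly like the functions $u_n$ in the proof of Theorem~\ref{mom}.

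First I would establish, by the very argument of Theorem~\ref{mom} (It\^o's formula applied to $u(X_{t\wedge\T_z})$; the stochastic integral is killed and, to be careful, localized at $\T_z\wedge\T_N$ with $\T_N$ the hitting time of $N$, then $N\to\infty$ using $\Lambda(\infty)=\infty$, the boundedness of $u$, $u(z)=0$ and $\T_z<\infty$ a.s.), that $u(x)=\EE_x\big(\int_0^{\T_z} h(X_s)\,ds\big)$. Next, since $f\in\mathcal C^1$ and $\T_z<\infty$ a.s., the fundamental theorem of calculus together with the substitution $s\mapsto\T_z-s$ gives $f(\T_z)=f(0)+\int_0^{\T_z}f'(\T_z-s)\,ds$. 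Taking $\EE_x$ and applying Fubini (legitimate because $\EE_x\int_0^{\T_z}|f'(\T_z-s)|\,ds\le A\,\EE_x(\T_z)+\tfrac B\lambda(\EE_x(e^{\lambda\T_z})-1)<\infty$ by \eqref{moment} and \eqref{moment-expo}), one obtains $\EE_x(f(\T_z))-f(0)=\int_0^\infty\EE_x\big(f'(\T_z-s)\ind_{\{s<\T_z\}}\big)\,ds$. On $\{s<\T_z\}$ one has $\T_z-s=\T_z\circ\theta_s$, so the strong Markov property at time $s$ yields $\EE_x\big(f'(\T_z-s)\ind_{\{s<\T_z\}}\big)=\EE_x\big(\EE_{X_s}(f'(\T_z))\,\ind_{\{s<\T_z\}}\big)=\EE_x\big(h(X_s)\ind_{\{s<\T_z\}}\big)$. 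Reassembling, and using Fubini once more,
\begin{align*}
\EE_x(f(\T_z))-f(0)&=\int_0^\infty\EE_x\big(h(X_s)\ind_{\{s<\T_z\}}\big)\,ds=\EE_x\Big(\int_0^{\T_z}h(X_s)\,ds\Big)=u(x),
\end{align*}
which is exactly \eqref{eq:general}.

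I do not expect a genuine obstacle: the whole content is the identity $u(x)=\EE_x\int_0^{\T_z}h(X_s)\,ds$, which is the $n$‑th step of Theorem~\ref{mom} with $(n-1)u_{n-1}$ replaced by $h$, and the remainder is the time‑reversal/strong‑Markov bookkeeping. The points deserving care are only: (a) the two Fubini applications, both covered by the exponential‑moment bound \eqref{moment-expo} and $\m(z)<\infty$; (b) the vanishing of the expectation of the It\^o stochastic integral, handled by the localization at $\T_z\wedge\T_N$ already implicit in Theorem~\ref{mom}; and (c) should one want $u\in\mathcal C^2$ rather than merely $u'$ locally absolutely continuous, the continuity of $h$, which follows from the a.s. continuity of the flow $\xi\mapsto\T_z^{(\xi)}$ together with uniform integrability of $\{f'(\T_z^{(\xi)})\}_{\xi\ge z}$ obtained from $\sup_{\xi\ge z}\EE_\xi(e^{\lambda'\T_z})<\infty$ for any $\lambda<\lambda'<1/\m(z)$.
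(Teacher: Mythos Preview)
Your argument is correct and complete; the points you flag (Fubini, localization, continuity of $h$) are the right ones and your handling of them is sound.

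The paper's own proof takes a different and shorter route: it simply observes that \eqref{eq:general} holds for every polynomial $f$ (this is exactly the content of the recurrence \eqref{for:0} in Theorem~\ref{mom}), and then passes to the limit by approximating $f'$ by polynomials, the growth bound $|f'|\le A+Be^{\lambda\cdot}$ together with \eqref{moment-expo} providing the dominated convergence needed on both sides. Your approach, by contrast, is a direct proof via the occupation-time identity $u(x)=\EE_x\int_0^{\T_z}h(X_s)\,ds$ and the strong Markov property, bypassing any approximation. This is essentially the Green-function viewpoint the paper records in the Remark immediately following the proposition. The trade-off: the paper's argument is a two-line corollary of Theorem~\ref{mom} once that theorem is in hand, whereas your argument is self-contained (it only reuses the $n=1$ It\^o step of Theorem~\ref{mom}) and makes transparent why the formula holds, at the cost of writing out the time-reversal and Markov bookkeeping explicitly.
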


\begin{proof}
According to what we have proved, this relation holds for any polynomial $f$. Then, the result is obtained by approximating
$f'$ by polynomials and passing to the limit, thanks to the growth condition on $f'$ and \eqref{moment-expo}.

\me As a particular case, we have the following relation \begin{equation}
\label{eq:charac}
\EE_x(e^{i\theta \T_z})=1+2i\theta \int_z^\infty e^{\gamma(y)} \int_y^\infty \EE_\xi(e^{i\theta \T_z}) \, e^{-\gamma(\xi)} \, d\xi dy
\end{equation}
 for the characteristic function of $\T_z$.
\end{proof}

\begin{remark}We can reinterpret these formulas using the Green function. Recall that the Green function $g_z:[z,\infty)^2\to \RR_+$, 
for the process $(X_{t\wedge \T_z}: t\ge 0)$ on the domain
$[z,\infty)$, is given by (see \cite{KS}
) $g(x,y)=g_z(x,y)=\int_z^{x\wedge y} e^{\gamma(w)-\gamma(z)}\, dw$  and satisfies for any  measurable function $h$ either positive or 
integrable with respect to $g(x,\xi) e^{-\gamma(\xi)} \, d\xi$ with $h(z)=0$, that
\begin{equation}
\label{eq:green}
\int_0^\infty \EE_x\left(h(X_s)\1_{s<\T_z}\right) ds=\int_z^\infty g(x,\xi) h(\xi)\,  2e^{-(\gamma(\xi)-\gamma(z))} \, d\xi.
\end{equation}

Notice that  (\ref{eq:green}) can be rewritten as 
\begin{equation}
\label{eq:green2}
\EE_x\left(\int_0^{\T_z}  h(X_s) ds\right)%=\int_0^\infty \EE_x\left(h(X_s)\1_{s<\T_z}\right) ds
=2\int_z^x e^{\gamma(y)} \!\!\int_y^\infty h(\xi) e^{-\gamma(\xi)} \, d\xi dy.
\end{equation}
So, if we take $h(\xi)=\ind_{(z,\infty)}(\xi)$, we recover  \eqref{moment}.
%as before $\EE_x(\T_z)=2 \int_z^x e^{\gamma(y)} \int_y^\infty e^{-\gamma(\xi)} \, d\xi dy$.
On the other hand, if we take $h(\xi)=\EE_\xi(f'(\T_z))$ with $f$ as in the statement of Proposition \ref{integ-f},  we obtain
%,  $$
%\begin{array}{l}
%2\int_z^x e^{\gamma(y)} \!\!\int_y^\infty \EE_\xi(f'(\T_z)) e^{-\gamma(\xi)} \, d\xi dy=
%\int_0^\infty \EE_x\left(\EE_{X_s}(f'(\T_z))\1_{s<\T_z})\right) ds=\\
%\int_0^\infty \EE_x\left(f'(\T_z-s)\1_{s<\T_z})\right) ds=
%\EE_x\left(\int_0^{\T_z}  f'(\T_z-s) ds\right)=\EE_x(f(\T_z))-f(0).
%\end{array}
%$$
 Formula (\ref{eq:general}), using the strong Markov property.

%\medskip

%\begin{remark} Another way to write this equality is given by
%$$
%\mathcal{L} \,\EE_x(f(\T_z))=-\EE_x(f'(\T_z)),\, x> z
%$$
%where, as usual, $\LL$ acts on the variable $x$.
\end{remark}

%, proving that
%\begin{equation}
%\label{eq:00}
%\underline\lambda(z)\ge \frac{1}{\m(z)}=\frac{1}{2\int_z^\infty e^{\gamma(y)}\int_y^\infty e^{-\gamma(\xi)}d\xi\,dy}.
%\end{equation}

%%$$
%That is, $U$ is a positive and bounded solution to the equation
%$$
%\LL u=-\lambda u,\, u(z)=1,\, u'(z)=2\lambda e^{\gamma(z)} \int_z^\infty U(\xi,z,\lambda) e^{-\gamma(\xi)}d\xi.
%$$
%Passing to the limit $x\to \infty$ we conclude that (in Section \ref{sec:P_infty} we shall give sense to $\EE_\infty$)
%\begin{equation}
%\label{eq:0}
%\EE_\infty(e^{\lambda \T_z})=
%1+2\lambda\int_z^\infty e^{\gamma(y)}\int_y^\infty \EE_\xi(e^{\lambda \T_z}) 
%e^{-\gamma(\xi)}d\xi\,dy\le \frac{1}{1-\lambda\m(z)}.
%\end{equation}

\section{The limit process $X^\infty$}
\label{sec:P_infty}

First, we introduce the process starting from infinity and check that it satisfies the expected properties.
\begin{theorem}
Assume that $\HH_1$  holds.
 The following limit exists $\PP$-almost-surely for any $t\geq 0$,  
$$
X_t^\infty=\lim\limits_{x\to \infty} X^x_{t\wedge \T^x_0}\in [0,\infty].
$$
The process $X^\infty$ is  continuous, $X_0^\infty=\infty$, $X^\infty_t<\infty$ for all $t>0$ and  
$\T_z^{(\infty)}=\inf\{t\ge 0: X^\infty_t=z\}=\lim\limits_{x\to \infty} \T^{(x)}_z\,$ has some exponential moments
for all $z\ge 0$.

For all $0<s\le t \le \T_0^{(\infty)}$, the process  $X^\infty$ satisfies
\be
\label{EDSlim}
X_t^\infty=X_s^\infty+B_t-B_s-\int_s^t \q(X_u^\infty)\, du.
\ee
\end{theorem}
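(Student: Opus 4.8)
The plan is to transfer the whole problem to the bounded state space by composing with the Lyapunov function $\m$, and to exploit the fact that $\LL\m\equiv 1$ is \emph{constant}: this is exactly what makes differences of $\m(X^x)$ into martingales and reduces everything to soft arguments, with the single genuinely delicate point being the non‑return from $\infty$.

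\textbf{Monotone limits and exponential moments.} I would first recall (as already noted at the beginning of Section~\ref{sec:moments}) that, off a single $\PP$‑null set, the flow is monotone: $x\le x'$ forces $X^x_{t\wedge\T_0^{(x)}}\le X^{x'}_{t\wedge\T_0^{(x')}}$ for all $t$, and $x\mapsto\T_z^{(x)}$ is nondecreasing on $[z,\infty)$. Hence for each fixed $t$ the increasing limit $X^\infty_t:=\lim_{x\to\infty}X^x_{t\wedge\T_0^{(x)}}\in[0,\infty]$ exists a.s., as does $\T_z^{(\infty)}:=\lim_{x\to\infty}\T_z^{(x)}\in[0,\infty]$. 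Monotone convergence applied to \eqref{moment-expo} then yields $\EE\bigl(e^{\lambda\T_z^{(\infty)}}\bigr)=\lim_x\EE_x(e^{\lambda\T_z})\le(1-\lambda\m(z))^{-1}<\infty$ for $\lambda<1/\m(z)$ and every $z\ge 0$ (using $\m(0)<\infty$ under $\HH_1$), so $\T_z^{(\infty)}$ has exponential moments and is a.s. finite.

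\textbf{Uniform convergence after the change of variable $y\mapsto\m(y)$.} Set $Y^x_r:=\m\bigl(X^x_{r\wedge\T_0^{(x)}}\bigr)\in[0,\m(0)]$. Since $\m$ is decreasing, $r\mapsto Y^x_r$ is nonincreasing in $x$, with a.s. limit $Y^\infty_r:=\m(X^\infty_r)$ (convention $\m(\infty)=0$), and $Y^x_0=\m(x)\to 0$, so in these coordinates there is no boundary layer at time $0$. For $x'<x$, as long as neither diffusion is absorbed, Itô's formula together with $\LL\m=1$ gives
$$
Y^{x'}_r-Y^x_r=\bigl(\m(x')-\m(x)\bigr)+\int_0^r\bigl(\m'(X^{x'}_u)-\m'(X^x_u)\bigr)\,dB_u ,
$$
so, because the drift has cancelled and $\m'$ is bounded (Corollary~A.2), $Y^{x'}-Y^x$ is a nonnegative martingale on $[0,\T_0^{(x')}]$, and Doob's maximal inequality gives $\PP\bigl(\sup_{r\le\T_0^{(x')}}(Y^{x'}_r-Y^x_r)\ge\lambda\bigr)\le(\m(x')-\m(x))/\lambda$. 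Letting $x\to\infty$ and then $x'\to\infty$, and using the monotonicity in $x,x'$, one gets $\sup_{r\le\T_0^{(x')}}(Y^{x'}_r-Y^\infty_r)\to 0$ in probability, hence a.s.; thus $Y^x\to Y^\infty$ a.s. uniformly on $[0,\T_0^{(x)}]$, and since $\T_0^{(x)}\uparrow\T_0^{(\infty)}$, uniformly on compact subintervals of $[0,\T_0^{(\infty)}]$ (the right endpoint is harmless since $Y^x_{\T_0^{(x)}}=\m(0)=Y^\infty_{\T_0^{(\infty)}}$). In particular $Y^\infty$ is continuous on $[0,\T_0^{(\infty)}]$ with $Y^\infty_0=0$.

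\textbf{Finiteness at positive times (the hard step), and the SDE.} It remains to show $Y^\infty_t>0$, i.e. $X^\infty_t<\infty$, for $0<t<\T_0^{(\infty)}$. Passing to the limit in $Y^x_r=\m(x)+(r\wedge\T_0^{(x)})+M^x_r$ with $M^x_r:=\int_0^{r\wedge\T_0^{(x)}}\m'(X^x_u)\,dB_u$ (an $L^2$‑bounded martingale since $\m'$ is bounded), one obtains $Y^\infty_r=(r\wedge\T_0^{(\infty)})+\widehat M_r$, where $\widehat M:=\lim_x M^x$ is a continuous martingale (uniform limit of $L^2$‑bounded martingales). Put $\tau_+:=\inf\{r>0:Y^\infty_r>0\}$, a stopping time; on $\{\tau_+>0\}$ we have $Y^\infty_r\equiv 0$ on $[0,\tau_+)$, hence $\widehat M_r=-r$ there. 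But a continuous local martingale is constant on any interval on which it has finite variation, which contradicts $\widehat M_r=-r$ unless $\tau_+=0$; therefore $\tau_+=0$ a.s. This yields $X^\infty_t<\infty$ for all $t>0$ a.s., continuity of $X^\infty$ on $(0,\infty)$ (compose the continuous path $Y^\infty$, which takes values in $(0,\m(0)]$ for $t>0$, with the homeomorphism $\m^{-1}$), $X^\infty_t=\m^{-1}(Y^\infty_t)\to\infty$ as $t\downarrow 0$ so $X^\infty_0=\infty$, and — via the uniform convergence $X^x\to X^\infty$ on compacts of $(0,\T_0^{(\infty)}]$ together with $X^x_0=x>z$ and the intermediate value theorem — the identity $\T_z^{(\infty)}=\inf\{t\ge 0:X^\infty_t=z\}$. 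Finally, for fixed $0<s\le t<\T_0^{(\infty)}$ and $x$ large enough that $t<\T_0^{(x)}$ one has $X^x_t=X^x_s+B_t-B_s-\int_s^t q(X^x_u)\,du$; letting $x\to\infty$, using uniform convergence of $X^x$ on $[s,t]$ and continuity of $q$ to pass the limit through the integral, gives \eqref{EDSlim} on $(0,\T_0^{(\infty)})$, and the endpoint $t=\T_0^{(\infty)}$ follows by continuity on the compact $[s,\T_0^{(\infty)}]$.

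I expect the only real obstacle to be the finiteness step: every other piece is soft once $\LL\m$ is known to be constant (so that the drift in $Y^{x'}-Y^x$ cancels), whereas the statement that $Y^\infty$ does not get stuck at $0$ — equivalently that the process genuinely comes down from and never returns to $\infty$ — is precisely where $\HH_1$ (the entrance‑boundary property) is used, here packaged into the elementary fact that a nonconstant continuous function of finite variation cannot be a local martingale.
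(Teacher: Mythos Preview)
Your overall architecture coincides with the paper's: define $X^\infty$ by monotonicity of the flow, get exponential moments of $\T_z^{(\infty)}$ from \eqref{moment-expo} by monotone convergence, transfer to the compact state space via $\m$ using $\LL\m=1$, deduce continuity from uniform convergence of $Y^x=\m(X^x)$, and pass to the limit in the SDE. The one genuinely different idea is your treatment of the finiteness step. The paper does \emph{not} use a martingale/finite--variation argument; instead it first shows $\T_z^{(\infty)}\downarrow 0$ a.s.\ (from $\EE(\T_z^{(\infty)})=\m(z)\to 0$) and then bounds, for $0<t_0<t_1$,
\[
\PP\Big(\sup_{[t_0,t_1]}X^\infty>A\Big)\le \PP\big(\T_z^{(\infty)}>t_0\big)+\PP_z\Big(\sup_{[0,t_1]}X>A\Big)
\]
by the strong Markov property, and lets $A\to\infty$, $z\to\infty$. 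Your observation that the stopped process $\widehat M_{\,\cdot\wedge\tau_+}=-(\cdot\wedge\tau_+)$ is a nonincreasing continuous martingale, hence identically zero, hence $\tau_+=0$, is correct and is a pleasant alternative that exploits $\LL\m=1$ more sharply than the paper does.

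However, your sentence ``therefore $\tau_+=0$ a.s. This yields $X^\infty_t<\infty$ for all $t>0$ a.s.'' is a gap. The conclusion $\tau_+=0$ only says that $Y^\infty$ takes positive values at arbitrarily small times; it does \emph{not} preclude $Y^\infty$ vanishing again at some later $t_0\in(0,\T_0^{(\infty)})$, i.e.\ $X^\infty$ returning to $\infty$. You still need a non-return argument. One clean fix, using tools you already have: since $\EE(\T_z^{(\infty)})=\m(z)\to 0$ and $z\mapsto\T_z^{(\infty)}$ is nonincreasing, $\T_z^{(\infty)}\to 0$ a.s.; for any $t>0$ pick $z$ with $\T_z^{(\infty)}<t$, so $\T_z^{(x)}<t$ for every $x\ge z$, and by the strong Markov property $X^x_t$ has the law of $X^z$ evaluated at time $t-\T_z^{(x)}\le t$; non-explosion of $X^z$ then gives $\sup_x X^x_t<\infty$ a.s. (This is exactly the paper's route.) Alternatively, once you know $X^\infty_s<\infty$ at some $s\in(0,t)$ you can invoke monotonicity and non-explosion of the time-$s$ flow to bound $X^\infty_t\le\tilde X^{X^\infty_s}_t<\infty$; but you must say this, and take care that $s$ is random. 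As written, the finiteness claim is not proved.
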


\begin{proof}  Using that outside a set of $\PP$-measure $0$,   for every $t\geq 0$,  
$X^x_{t\wedge \T^{(x)}_0}$ is increasing in $x$, 
the following limit exists $\PP$-almost-surely for all $t$,  
$$
X_t^\infty=\lim\limits_{x\to \infty} X^x_{t\wedge \T^{(x)}_0}\in [0,\infty].
$$
Moreover $\T^{(x)}_z$ is increasing in $x$ and we set $\T_z=\lim\limits_{x\to \infty} \T^{(x)}_z$. Let us first check that $\T_z= \T^{(\infty)}_z$ for any $z\geq 0$.
Since $\T^{(x)}_{0}\le \T_0$, we have $X^x_{\T_0}=0$ for all $x$ and  then $X^\infty_{\T_0}=0$. Thus, $\T_0^{(\infty)}\leq \T_0$.
On the other hand, if $t<\T_0$, we have
$t<\T^{(x)}_0$ for all large $x\ge x_0=x_0(t,\omega)$, which implies that 
$0<X^{x_0}_t\le X^x_t\le X^\infty_t$ by monotonicity of the flow. 
We conclude  that $\T_0^{(\infty)}=\T_0=\lim\limits_{x\to \infty} \T^{(x)}_z$ and  \eqref{for:0}-\eqref{moment-expo}
ensure by monotone convergence that
$$
\begin{array}{l}
\EE(\T_0^{(\infty)})= \lim\limits_{x\to\infty} \EE(\T_0^{(x)}) =2\int_{0}^\infty e^{\gamma(y)} \int_{y}^\infty e^{-\gamma(\xi)} \, d\xi dy<\infty\\
\EE((\T_0^{(\infty)})^2)= \lim\limits_{x\to\infty} \EE((\T_0^{(x)})^2) =4\int_{0}^\infty e^{\gamma(y)} \int_{y}^\infty \EE(\T_0^{(\xi)}) e^{-\gamma(\xi)} \, d\xi dy<\infty.
\end{array}
$$
Similarly, one can prove that $\T^{(\infty)}_{z}=\lim\limits_{x\to\infty}\T^{(x)}_z$
holds, for all $z$, outside a set of $\PP$-measure 0. Since $\T^{(\infty)}_z\ge \T^{(x)}_z$, we conclude that $\T^{(\infty)}_z$ are positive random variables. 
On the other hand, $z\to \T^{(\infty)}_z$ is non increasing  and  bounded by
the integrable function $\T_0$, which implies
$$
\EE\left(\lim\limits_{z\to \infty} \T^{(\infty)}_z\right)=\lim\limits_{z\to \infty} \EE(\T^{(\infty)}_z)=
\lim\limits_{z\to \infty} 2\int_z^\infty e^{\gamma(y)} \int_y^\infty e^{-\gamma(\xi)}\, d\xi dy=0,
$$
and therefore $\lim\limits_{z\to \infty} \T^{(\infty)}_z=0$ a.s..
\medskip

Now, we shall deduce that  almost surely, $X^\infty_t<\infty$ for all $t>0$. Indeed, for any $0<t_0<t_1$, the event  $\{\sup_{t\in [t_{0}, t_1]} X^x_{t}>A\}$ increases to
$\{\sup_{t\in [t_{0}, t_1]} X^\infty_{t}>A\}$ as $x\rightarrow \infty$ and
$$
\PP\left(\sup_{t\in [t_{0},t_1]} X^\infty_{t}>A \right)= \lim\limits_{x\to \infty} \PP\left( \sup_{t\in [t_{0}, t_1]} X^x_{t} >A\right). 
$$
Let us fix $\varepsilon>0$ and introduce $z = z(\varepsilon)$ such that for any $x\geq z$ large enough $\PP(\T^x_{z} >t_0) <\varepsilon$, using that
 $\lim\limits_{z\to \infty} \T^{(\infty)}_z=0$ and $\T^{(x)}_{z}\leq \T^{(\infty)}_z$ a.s.
 We obtain
$$
\PP\left(\sup_{t\in [t_{0},t_1]} X^\infty_{t}>A \right)\leq \varepsilon + \lim\limits_{x\to \infty} \PP(\T^{(x)}_{z} \leq  t_0, \ \sup_{t\in [t_{0}, t_1]} X^x_{t}>A).
$$
By the strong Markov property at time $\T^{(x)}_{z}$, we obtain for $A>x\geq z(\varepsilon)$,
\begin{align*}
\PP\Big(\T^{(x)}_{z} \leq t_0, \ \sup_{t\in [t_{0}, t_1]} X^x_{t}>A \Big) & \le  \PP \left(\sup_{0\leq t \leq t_1-t_{0}} X_t^z >A\right).
\end{align*}
We then let  $A$ tend to infinity and this term vanishes since $X^z$ does not explode in finite time, which concludes the proof of  $X^\infty_t<\infty$ for all $t>0$.
\medskip

We now prove that $X^{\infty}$ is a continuous process.
We first apply the It\^o's formula to $\m(X^x_{t\wedge \T^{(x)}_0})$ and get
$$
\m(X^x_{t\wedge \T^{(x)}_0})=\m(x)+\int_0^{t\wedge\T^{(x)}_0} \m'(X^x_{s}) \, dB_s+t\wedge \T^{(x)}_0
$$
because $\mathcal{L} \m=1$. As mentioned in the introduction, the functions  $\m$ and $\m'$  are  bounded and continuous functions on $[0,\infty]$. Let us now consider the well defined process on $\RR^+$: 
$$\M_{t}=\int_0^{t\wedge\T_0^{(\infty)}} \m'(X^\infty_{s}) \, dB_s+t\wedge \T_0^{(\infty)}.$$
 Using Doob inequality we get for every $t_0>0$ fixed
\be
\label{vitesse}
&&\EE(\sup_{t\le t_0} |\m(X^{x}_{t\wedge \T^{(x)}_0})-\M_{t}|^2) \nonumber  \\
&& \le  4(\!\m(x))^2 +16\,\EE\left(\int_0^{t_0} \left(\m'(X^{x}_{s\wedge \T_0^{(x)}})-\m'(X^\infty_{s})\right)^2\, ds\right) \\ 
&& +  16\,\EE\left(\int_{t_{0} \wedge \T^{(x)}_0}^{t_0 \wedge \T_0^{(\infty)}} \left(\m'(X^\infty_{s})\right)^2\, ds\right)
+4\EE((\T_0^{(\infty)}-\T^{(x)}_0)^2). \nonumber
\ee
When $x$ tends to infinity, the first term of the RHS  tends to $0$ by $\HH_{1}$. The second term tends to $0$  
by dominated convergence theorem (since $\m'$ is bounded). The third term is bounded (up to a constant) by $\EE(\T_0^{(\infty)}-\T^{(x)}_0)$. It tends to $0$ as the forth term by dominated convergence theorem. We have thus proved that almost-surely, for any $t$,
 $$\M_{t} = \lim\limits_{x\to \infty} \m(X^{x}_{t\wedge \T^{(x)}_0}).$$ 
By continuity of $\m$, we deduce
$$
\m(X^\infty_t)=\int_0^{t\wedge\T_0^{(\infty)}} \m'(X^\infty_s) \, dB_s+t\wedge \T_0^{(\infty)}.
$$
This implies that the process $t \to \m(X^\infty_{t})$ is continuous and since $\m$ is a  diffeomorphism, we conclude that the process $t\to X^\infty_{t}$ is also continuous.  

 \me We conclude then easily that for all $0<s\le t \le \T_0^{(\infty)}$, the process  $X^\infty$ satisfies
 the SDE \eqref{EDSlim} for any $t>s>0$, by letting $x\rightarrow \infty$ and using $\m'$ bounded. 
\end{proof}

We obtain now the convergence of processes when the initial condition goes to infinity and provide the speed of this convergence in terms of the Lyapunov function
$\m$. For that purpose, we introduce the distance  $d_{\m}(x,y)=|\m(x)-\m(y)|$ and endow  $\RR_+\cup\{\infty\}$ with this distance which makes it a Polish space.

\begin{theorem} \label{cvetvit} Under   $\HH_1$,  
for any $T>0$, the law   of $(X^x_{t\wedge\T^{(x)}_0} : t\in [0,T])$ converges to the  law of $(X_t^\infty : t\in [0,T])$ as $x\to\infty$ in the space of probabilities on $C([0,T], \mathbb{R}_{+}\cup \{\infty\})$.  \\
%Moreover, there exists $C> 0$ such that for any $T>0$,
%$$\EE\left(\sup_{t\leq T_0^{x} \wedge T} d_{\m}(X_t^{\infty},X_s^x)^2 \right)\leq d_{\m}(x,\infty)  e^{CT}.$$
Assuming further  
$$\lim\limits_{x\to\infty} q(x)=\infty \  \text{  and } \  \limsup\limits_{x\to\infty} \frac{|q'(x)|}{q(x)}<\infty,$$
 there exist  $c_1,c_2>0$ such that for any $x\geq 0$ and $T>0$,
$$\EE\left(\sup_{t\leq  T\wedge \T^{(x)}_0} d_{\m}(X_{t}^{\infty},X_{t}^x)^2 \right)\leq c_1 \m^2(x)  e^{c_2T}.$$
\end{theorem}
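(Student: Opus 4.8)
The plan is to treat the two assertions separately, the first being essentially a repackaging of the estimate~\eqref{vitesse} already obtained. Since $\m$ is a decreasing homeomorphism of $\RR_{+}\cup\{\infty\}$ onto $[0,\m(0)]$ and $d_{\m}$ is by definition the pull-back of the Euclidean distance under $\m$, uniform convergence of paths measured by $d_{\m}$ is the same as uniform convergence of their images under $\m$. From~\eqref{vitesse} we have $\EE\big(\sup_{t\le T}|\m(X^{x}_{t\wedge\T^{(x)}_{0}})-\m(X^{\infty}_{t})|^{2}\big)\to 0$ as $x\to\infty$, hence $\sup_{t\le T}d_{\m}(X^{x}_{t\wedge\T^{(x)}_{0}},X^{\infty}_{t})\to 0$ in $L^{2}$, and a fortiori in probability; as $C([0,T],\RR_{+}\cup\{\infty\})$ with the uniform $d_{\m}$-distance is a Polish space, convergence in probability there entails convergence in distribution. (One even gets almost sure convergence: for $x<x'$ the stopped path of $X^{x}$ lies below that of $X^{x'}$, so $t\mapsto\m(X^{x}_{t\wedge\T^{(x)}_{0}})$ is nonincreasing in $x$ and converges pointwise a.s.\ to the continuous limit $t\mapsto\m(X^{\infty}_{t})$ on the compact $[0,T]$; Dini's theorem then upgrades this to uniform convergence.)

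For the quantitative bound, I would set $D_{t}:=\m(X^{\infty}_{t})-\m(X^{x}_{t})$. On $[0,\T^{(x)}_{0})\subset[0,\T^{(\infty)}_{0})$ both diffusions are positive and driven by the same Brownian motion, so Itô's formula together with $\LL\m=1$ gives $d\m(X^{\infty}_{t})=\m'(X^{\infty}_{t})\,dB_{t}+dt$ and the analogous identity for $X^{x}$; subtracting, the drift terms cancel and
$$
dD_{t}=\bigl(\m'(X^{\infty}_{t})-\m'(X^{x}_{t})\bigr)\,dB_{t},\qquad D_{0}=\m(\infty)-\m(x)=-\m(x).
$$
Hence the stopped process $\widetilde D_{t}:=D_{t\wedge\T^{(x)}_{0}}$ is a martingale started at $-\m(x)$, and in fact a genuine $L^{2}$-martingale because $|\widetilde D_{t}|\le\m(0)$ and $\m'$ is bounded. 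Moreover $d_{\m}(X^{\infty}_{t},X^{x}_{t})=|D_{t}|$ and the path is frozen after $\T^{(x)}_{0}$, so $\sup_{t\le T\wedge\T^{(x)}_{0}}d_{\m}(X^{\infty}_{t},X^{x}_{t})^{2}=\sup_{t\le T}\widetilde D_{t}^{2}$.

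The heart of the argument, and the step I expect to be the only real obstacle, is to show that $\varphi:=\m'\circ\m^{-1}$ is Lipschitz on $[0,\m(0)]$, equivalently that $K:=\sup_{y\ge 0}|\m''(y)/\m'(y)|<\infty$; granting this, $|\m'(X^{\infty}_{t})-\m'(X^{x}_{t})|=|\varphi(\m(X^{\infty}_{t}))-\varphi(\m(X^{x}_{t}))|\le K|D_{t}|$. Here the extra hypotheses enter: $\q\to\infty$ together with $\limsup|\q'|/\q<\infty$ forces $\HH_{2}$, so Corollary~\ref{cor:1} applies and in particular $\q(y)\m'(y)\to-1$, i.e.\ $\m'(y)\sim-1/\q(y)$. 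Differentiating $\LL\m=1$ yields $\m''=2+2\q\m'$, so $w:=\tfrac12\m''=1+\q\m'$ solves the linear ODE $w'-2\q w=\q'\m'$; since $\gamma\to\infty$ while $w$ is bounded with $w(\infty)=0$, integrating with the integrating factor $e^{-\gamma}$ gives $w(y)=-e^{\gamma(y)}\int_{y}^{\infty}e^{-\gamma(s)}\q'(s)\m'(s)\,ds$. For $s$ large, $|\q'(s)|\le M\q(s)$ and $|\m'(s)|\le C_{0}/\q(s)$, hence $|\q'(s)\m'(s)|$ is bounded there, and therefore $|w(y)|\le C\,e^{\gamma(y)}\int_{y}^{\infty}e^{-\gamma(s)}\,ds=\tfrac{C}{2}|\m'(y)|$ for $y$ large, which bounds $|\m''/\m'|$ near $\infty$; on compacts $\m''$ is continuous and $\m'$ is continuous and strictly negative, so $K<\infty$, and $\varphi$, being continuous on $[0,\m(0)]$ with derivative $\m''/\m'\circ\m^{-1}$ bounded by $K$ on $(0,\m(0)]$, is $K$-Lipschitz.

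Finally, with the Lipschitz bound in hand one has $\langle\widetilde D\rangle_{t}\le K^{2}\int_{0}^{t}\widetilde D_{s}^{2}\,ds$, so $\EE(\widetilde D_{t}^{2})=\m^{2}(x)+\EE\langle\widetilde D\rangle_{t}\le\m^{2}(x)+K^{2}\int_{0}^{t}\EE(\widetilde D_{s}^{2})\,ds$, and Gronwall gives $\EE(\widetilde D_{t}^{2})\le\m^{2}(x)e^{K^{2}t}$. Applying Doob's $L^{2}$-inequality to $\widetilde D-D_{0}$ and inserting this estimate for $\EE\langle\widetilde D\rangle_{T}$ then yields $\EE\bigl(\sup_{t\le T}\widetilde D_{t}^{2}\bigr)\le c_{1}\,\m^{2}(x)\,e^{c_{2}T}$ with explicit constants (for instance $c_{1}=10$, $c_{2}=K^{2}$), and since $\sup_{t\le T\wedge\T^{(x)}_{0}}d_{\m}(X^{\infty}_{t},X^{x}_{t})^{2}=\sup_{t\le T}\widetilde D_{t}^{2}$, this is precisely the claimed bound.
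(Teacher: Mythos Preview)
Your proposal is correct and follows essentially the same approach as the paper. For the first assertion both you and the paper simply invoke the estimate~\eqref{vitesse}; for the second, both reduce to the Lipschitz property of $\m'$ with respect to $d_{\m}$ (equivalently $\sup|\m''/\m'|<\infty$, which the paper isolates as Lemma~\ref{lip} and you prove inline via the ODE for $w=1+q\m'$) and then close by Doob plus Gronwall, the only cosmetic difference being that the paper applies Doob first and Gronwall to the running supremum, whereas you Gronwall the second moment first and then apply Doob.
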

Thus, $\m(x)=d_{\m}(x,\infty)$ provides the speed of the convergence of the flow when $x\rightarrow \infty$. Letting $T=0$ in this inequality shows that it is sharp.
\begin{proof}
We recall that $\M_{t}=\m(X_t^{\infty})$ where $X_t^{\infty}=X_{t\wedge \T_0^{(\infty)}}^{\infty}$. Then,  \eqref{vitesse} is given by
\ben
&&\EE\left(\sup_{t\le T} d_{\m}\left(X^{x}_{t\wedge \T^{(x)}_0},X_{t}^{\infty} \right)^2\right)  \\
&& \leq  4\!\m^2(x) +16\,\EE\left(\int_0^{T} \left(\m'(X^{x}_{s\wedge \T_0^{(x)}})-\m'(X^\infty_{s})\right)^2\, ds\right) \nonumber\\ 
&& +  16\, \parallel \m'\parallel_{\infty} \EE\left( \T_0^{(\infty)}-\T^{(x)}_0\right)+4\EE((\T_0^{(\infty)}-\T^{(x)}_0)^2),
\een
where each term of the right hand side goes to $0$. This ends up the proof of the first part of the 
statement. Similarly, as in  \eqref{vitesse} we have (using that $\T^{(x)}_{0}\leq \T^{(\infty)}_{0}$),
\ben
&&\EE\left(\sup_{t\le T\wedge \T^{(x)}_0} d_{\m}\left(X^{x}_{t},X_{t}^{\infty} \right)^2\right)=
\EE\left(\sup_{t\le T\wedge \T^{(x)}_0} |\m(X^{x}_t)-\M_{t}|^2\right)  \\
&&\le  4\m^2(x) +16\,\EE\left(\int_0^{T\wedge \T_0^{(x)} } \left(\m'(X^{x}_s)-\m'(X^\infty_{s})\right)^2\, ds\right) 
\een
Moreover, our assumptions here ensure that $\m''/\m'$ is bounded, see Lemma \ref{lip} for the proof. Then there there exists $c_1 >0$ such that for any $y\geq x \geq 0$,
$$\vert \m'(x)-\m'(y) \vert \leq \int_x^y \vert \m''(z)\vert ds  \leq c_1 \int_x^y \vert \m'(z) \vert dz =c_1 d_{\m}(x,y), $$
since $\m'$ has a constant sign. The second part of the statement then follows from  Gronwall inequality.
\end{proof}

%\marginpar{ FAIRE LE LIEN AVEC NOTRE PAPIER AVEC VINCENT ET MATHIEU}

\section{ Proofs of  Theorem \ref{the:1} and Corollary \ref{cor:Pierre}}

Let us assume in this section  that $ \HH_1$ and  $\HH_2$ hold. 
We have seen in the previous section that
$\lim\limits_{z\to \infty} \T^{(\infty)}_z = 0 \  a.s.$  and $\lim\limits_{z\to \infty}  \EE(\T^{(\infty)}_z)=0$.

\begin{proof}[Proof of Theorem \ref{the:1}]
The first statement  follows immediately from the behavior  of the second moment. 
Indeed let  $\epsilon>0$ and use Markov inequality to get
$$
\PP\left(\left|\frac{\T^{(\infty)}_z}{\EE(\T^{(\infty)}_z)}-1\right|\ge \epsilon\right)
\le \frac{1}{\epsilon^2} \left(\frac{\EE((\T^{(\infty)}_z)^2)}{(\EE(\T^{(\infty)}_z))^2}-1\right).
$$
This right hand side converges to zero from \eqref{eq:1}, and the result is proved.

\medskip

The second statement  is more involved.
Recall that the function  $\m(w)=\EE(\T^{(\infty)}_w)$ is positive and decreasing. Hence, for $0<\rho<1$, 
we can define recursively $z_n$ for  $n\ge 0$ by $z_{0}>0$ and 
$$
z_n=\inf\{ w>z_{n-1}:\,  \m(w)=  (1-\rho) \m(z_{n-1})\}.
$$
It is clear that $(z_n: n\ge 0)$ is a strictly increasing sequence. Let us prove that $z_n\to \infty$. Indeed, we have
for $n\ge 1$,
$\m({z_n})= (1-\rho)^{n} \m(z_0),$
which tends to $0$ as $n$ tends to infinity,
implying that $z_n\to \infty$.

\smallskip We first prove that  
\begin{equation}
\label{LLN}\lim\limits_{n\to \infty} \frac{\T^{(\infty)}_{z_n}}{\EE(\T^{(\infty)}_{z_n})}=1
\end{equation}
holds a.s. using  the decomposition
$$\T^{(\infty)}_{z_n} = \sum\limits_{k\ge n+1}\tau_k,  \qquad  \text{with } \  \tau_k=\T^{(\infty)}_{z_{k-1}}-\T^{(\infty)}_{z_k}.$$

\noindent Since the process $X^{\infty}$ satisfies the strong 
Markov property, the random variables $(\tau_k)_{k\geq 0}$  are independent with the same 
distribution as $\T_{z_{k-1}}^{(z_k)}$. Recalling that $\EE(\T_z^{(\infty)}) = \m(z),$
%Indeed, for continuous bounded functions $f$ and $g$ and $k\leq p-1$, we get  by Markov property that
%\begin{align*}
%\EE(f(\T^{(\infty)}_{z_{p-1}}-\T^{(\infty)}_{z_p}) g(\T^{(\infty)}_{z_{k-1}}-\T^{(\infty)}_{z_k})) &= \lim\limits_{m\to \infty} \EE_{z_{m}}(f(\T_{z_{p-1}}-\T_{z_p}) g(\T_{z_{k-1}}-\T_{z_k})) \\
%&= \EE(f(\T^{(\infty)}_{z_{p-1}}-\T^{(\infty)}_{z_p})) \EE_{z_{k}}(g(\T_{k-1})).
%\end{align*}
% We denote by $(\tau_k, k\ge 1)$ a collection of independent random variables such that $\tau_k$ has the same distribution as $\T^{(\infty)}_{z_{k-1}}-\T^{(\infty)}_{z_k}$.
we shall prove that
$$
\lim\limits_{n\to \infty} \frac{1}{\m(z_{n})}  \sum\limits_{k\ge n+1} (\tau_k-\EE(\tau_k))=0\,\hbox{ a.s.}
$$
to get  \eqref{LLN}.
We  will first prove that  
$$
\sum\limits_{k\ge 1} \frac{\hbox{Var}(\tau_k)}{\m^2(z_{k-1})}<\infty,
$$
and using that $\m(z_{k})$ is decreasing, we apply  Proposition 1 in Klesov \cite{klesov}.
Note that since the $(\tau_{k})_{k\geq 0}$ are independent,  
$$
\frac{\hbox{Var}(\tau_k)}{\m^2(z_{k-1})}= \frac{\hbox{Var}(\T^{(\infty)}_{z_{k-1}})-\hbox{Var}(\T^{(\infty)}_{z_{k}})}{\m^2(z_{k-1})}.
$$
Since by \eqref{eq:2}, $V(w)=\hbox{Var}(\T^{(\infty)}_w)=8\int_w^\infty e^{\gamma(y)} \int_y^\infty e^{\gamma(\xi)} 
\left(\int_{\xi}^\infty e^{-\gamma(\eta)} \,d\eta\right)^2\, d\xi\, dy$ is a decreasing function, we get
$$
\sum\limits_{k\ge 1} \frac{\hbox{Var}(\tau_k)}{\m^2(z_{k-1})}=\sum\limits_{k\ge 1} \frac{\int_{z_{k-1}}^{z_k} -V'(w)\, dw}{\m^2(z_{k-1})}
\le \sum\limits_{k\ge 1} \int_{z_{k-1}}^{z_k} \frac{-V'(w)}{\m^2(w)}\, dw=\int_{z}^\infty \frac{-V'(w)}{\m^2(w)}\, dw.
$$
This last integral is finite (or infinite) if it is finite (or infinite) for an equivalent function (see Corollary \ref{cor:1} 
(i),(ii) and (iii))
$$
\frac{-V'(w)}{\m^2(w)}=8\frac{e^{\gamma(w)} \int_w^\infty e^{\gamma(\xi)} 
\left(\int_{\xi}^\infty e^{-\gamma(\eta)} \,d\eta\right)^2\, d\xi}{\m^2(w)} \approx  \frac1{\q^3(w) (\int_w^\infty \q^{-1}(x))^2 dx}.
$$
Our extra hypothesis ensures  that this last function is integrable. 
This yields the a.s. convergence for the desired ratio along the subsequence $(z_n)_n$.

\smallskip

Now, we prove that this sequence dominates the full path.  Indeed, we consider $z>z_{0}$ and $n\ge 1$ such that $z\in [z_{n-1},z_n[$. Then by the obvious monotonicity 
of $\T^{(\infty)}_z$ and $ \m(z)$ on $z$, we get
$$
\frac{\T^{(\infty)}_{z_{n-1}}}{\m(z_{n-1})}\ge \frac{\T^{(\infty)}_z}{\m(z_{n-1})}
\ge (1-\rho)\frac{\T^{(\infty)}_z}{\m(z)}\\
\ge (1-\rho)\frac{\T^{(\infty)}_{z_n}}{\m(z_{n-1})}=(1-\rho)^2\frac{\T^{(\infty)}_{z_n}}{\m(z_n)}.
$$
Therefore almost surely,
$$
1-\rho \le \liminf\limits_{z\to \infty} \frac{\T^{(\infty)}_z}{\m(z)} \le \limsup\limits_{z\to \infty} \frac{\T^{(\infty)}_z}{\m(z)} \le \frac{1}{1-\rho} .
$$
Taking  $\rho\to 0$ ends the proof of   Theorem \ref{the:1}.
\end{proof}

\bi

\begin{proof}[Proof of Corollary \ref{cor:Pierre}] Inequality (\ref{moment-expo}) shows that
for all $\lambda<1$, we have
$$
\EE\left(\exp\left(\lambda\, \frac{\T^{(\infty)}_z}{\EE(\T^{(\infty)}_z)}\right)\right)\le \frac{1}{1-\lambda}.
$$
This means that $\left(\exp\left(\lambda\, \T^{(\infty)}_z/\EE(\T^{(\infty)}_z)\right), z\ge 1\right)$ is a tight family (consider $\lambda<\lambda'<1$) and the result
follows from the convergence in probability of $ \frac{\T^{(\infty)}_z}{\EE(\T^{(\infty)}_z)}$ to 1. 
Finally, $(ii)$ is shown similarly.
\end{proof}

\bigskip 

\section{Proof of  Theorem \ref{the:2}}
We will use L\'evy's Theorem and prove that for all $t$ small enough and all $z$ large enough,
$$
\EE\left(\exp\left(it
\frac{\T^{(\infty)}_z-\EE(\T^{(\infty)}_z)}{\hbox{Var}(\T^{(\infty)}_z)^{1/2}}\right)\right)= 
\exp\left(-\frac{t^2}{2}\right)\left(1+o(1)\right).
$$

As for Theorem \ref{the:1}, the proof relies on the fact that $\T^{(\infty)}_z$ can be seen as a 
sum of independent random variables, which are adapted to estimate the variance. For that
purpose, we consider $\rho\in (0,1)$ and define recursively another sequence $z_n$ by $z_0=z$ and for $n\ge 1$
$$
z_{n}=\inf\{x> z_{n-1}: \hbox{Var}(\T^{(x)}_{z_{n-1}})=\rho\,  \hbox{Var}(\T^{(\infty)}_{z_{n-1}})\}.
$$
Note that the function $x\to  \hbox{Var}(\T^{(x)}_{z_{n-1}})$ is  continuous, vanishing at 
 $z_{n-1}$ and positive at infinity. The sequence $(z_n:n\ge 0)$ is well
defined and it is strictly increasing. We denote by $\bar z\in (z,\infty]$ its limit and first  prove that $\bar z = \infty$.  \\
As in the proof of Theorem \ref{the:1},  $\tau_n=\T^{(\infty)}_{z_{n-1}}- \T^{(\infty)}_{z_{n}}$ are  independent  and
%We deduce $$\hbox{Var}_\infty(\T_z)=\hbox{Var}_\infty(\T_{z_{n}})+\sum \limits_{k=1}^n \hbox{Var}(\tau_k).$$
$$ 
\hbox{Var}(\tau_n)=\rho\,\hbox{Var}(\T^{(\infty)}_{z_{n-1}}),
$$
by construction, since $\tau_n$ is distributed as $\T^{z_n}_{z_{n-1}}$.
This implies $\hbox{Var}(\T^{(\infty)}_{z_{n-1}})=\rho \hbox{Var}(\T^{(\infty)}_{z_{n-1}})+\hbox{Var}(\T^{(\infty)}_{z_{n}})$ and by induction
$$
\begin{array}{l}
\hbox{Var}(\T^{(\infty)}_{z_{n}}) = (1-\rho)^n\,\hbox{Var}(\T^{(\infty)}_z).
\end{array}
$$
Letting $n\rightarrow \infty$ and using that
  $\hbox{Var}(\T^{(\infty)}_{z_{n}})\rightarrow \hbox{Var}(\T^{(\infty)}_{\bar z})$ from 
(\ref{eq:2}), we get
 $$\lim\limits_{n\rightarrow \infty} z_n=\bar z=\infty.$$
Moreover, writing $\sigma(z) = \hbox{Var}^{1/2}(\T^{(\infty)}_z)$, we have 
\be
\label{calvar}
\hbox{Var}(\tau_n)=\rho(1-\rho)^{n-1}\sigma(z)^2.
\ee
\medskip

Now we turn to the proof of the convergence of characteristic function. Recalling that
%So far the conclusion is that $\bar z=\infty$ and that 
$\T^{(\infty)}_z - \EE(\T^{(\infty)}_{z})= \sum\limits_{k\ge 1} (\tau_k-\EE(\tau_{k}))$,
%\medskip
%Let us denote .
%Since  $\,\hbox{Var}\Big( \frac{\tau_{k} - \EE(\tau_{k})}{\sigma_{z}}\Big) = \rho(1-\rho)^{k-1}$,
%the series on the right hand side converges $a.s.$ by Kolmogorov Theorem. 
%Hence, 
we have for a fixed $t\in \mathbb{R}$, 
$$
\EE\left(e^{it \ \frac{\T^{(\infty)}_z - \EE(\T^{(\infty)}_{z})}{\sigma_{z}}}\right) = 
\prod_{k\ge 1} \EE\left(e^{it \ \frac{\tau_{k} - \EE(\tau_{k})}{\sigma_{z}}}\right).
$$ 
Using  \eqref{calvar} and  estimate (27-11) in \cite{billingsley}, we get
\begin{align*}
\left| \EE\left(e^{it \ \frac{\tau_{k} - \EE(\tau_{k})}{\sigma_{z}}}\right) - e^{-{t^2\over 2} \rho(1-\rho)^{k-1}}\right| 
\le {|t|^3\over \sigma(z)^3}\EE\left(|\tau_{k} - \EE(\tau_{k})|^3\right) + C{t^4}  \rho^2(1-\rho)^{2k},
\end{align*}
where $C$ is a positive constant such that $|e^{-x}-1 +x| \le C x^2$ for any $x\ge 0$.
Then,
\begin{align*}
\left|\EE\left(e^{it \ \frac{\T^{(\infty)}_z - \EE(\T^{(\infty)}_{z})}{\sigma_{z}}}\right)- e^{-{t^2\over 2}}\right| &= 
\left|\prod_{k\ge 1}\EE\left(e^{it \ \frac{\tau_{k} - \EE(\tau_{k})}{\sigma_{z}}}\right) -
\prod_{k\ge 1} e^{-{t^2\over 2} \rho(1-\rho)^{k-1} }\right|\\
&\le {|t|^3\over \sigma_{z}^3}\ \sum_{k\ge 1} \EE\left(|\tau_{k} - \EE(\tau_{k})|^3\right) + C{t^4} {\rho\over 2 - \rho}.
\end{align*}
It remains to estimate $ {1\over \sigma_{z}^3}\ \sum_{k\ge 1} \EE\left(|\tau_{k} - \EE(\tau_{k})|^3\right)$ and let $\rho$ go to $0$ to check that the right hand side vanishes.
 We notice that
using twice Cauchy-Schwartz inequality
\ben
&&{1\over \sigma(z)^3}\ \sum_{k\ge 1} \EE\left(|\tau_{k} - \EE(\tau_{k})|^3\right)\\
&& \qquad \le 
\sum_{k\ge 1} \left({1\over \sigma_{z}^4} \EE\left(|\tau_{k} - \EE(\tau_{k})|^4\right)\right)^{1/2}
\left({1\over \sigma(z)^2}\EE\left(|\tau_{k} - \EE(\tau_{k})|^2\right)\right)^{1/2}\\
&&\qquad
\le \left({1\over \sigma(z)^4} \sum_{k\ge 1} \EE\left(|\tau_{k} - \EE(\tau_{k})|^4\right)\right)^{1/2}
\left({1\over \sigma(z)^2} \sum_{k\ge 1} \EE\left(|\tau_{k} - \EE(\tau_{k})|^2\right)\right)^{1/2}\\
&&\qquad = \left({1\over \sigma(z)^4} \sum_{k\ge 1} \EE\left(|\tau_{k} - \EE(\tau_{k})|^4\right)\right)^{1/2},
\een
where  the last equality comes from $\sigma(z)^2=\hbox{Var}(\T^{(\infty)}_z)=\sum_{k \geq 1} \hbox{Var}(\tau_k)$ or \eqref{calvar}.
Theorem \ref{the:2}  follows now from the next lemma by letting $\rho$ tend to $0$.
\endproof

\medskip
\begin{lemma}We have for any $\rho\in (0,1)$,
$$
\lim\limits_{z\to \infty} \frac{1}{\sigma_{z}^4} \sum\limits_{k\ge 1} \EE\left([\tau_k-\EE(\tau_k)]^4\right)  =\frac{3\rho}{2-\rho}.
$$
\end{lemma}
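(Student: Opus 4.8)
The plan is to reduce the statement, by additivity of cumulants along the independent decomposition $\T^{(\infty)}_z=\sum_{k\ge 1}\tau_k$, to the single estimate $\kappa_4(z)=o\big(\hbox{Var}(\T^{(\infty)}_z)^2\big)$, where $\kappa_n(w)$ denotes the $n$-th cumulant of $\T^{(\infty)}_w$, and then to obtain this estimate from a Riccati equation for the cumulant generating function together with the l'H\^opital-type asymptotics already behind \eqref{eq:m_M} and \eqref{varex}. For the reduction: for any $Y$ with a finite fourth moment, $\EE[(Y-\EE Y)^4]=3\,\hbox{Var}(Y)^2+\kappa_4(Y)$; applying this to $\tau_k$ and using \eqref{calvar}, i.e. $\hbox{Var}(\tau_k)=\rho(1-\rho)^{k-1}\sigma_z^2$, the variance part sums to the geometric series $\sum_{k\ge1}3\,\hbox{Var}(\tau_k)^2=3\rho^2\sigma_z^4\sum_{k\ge1}(1-\rho)^{2(k-1)}=\tfrac{3\rho}{2-\rho}\sigma_z^4$. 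Since $\tau_k$ has the law of $\T^{z_k}_{z_{k-1}}$ and, by the strong Markov property at $\T^{(\infty)}_{z_N}$, $\T^{(\infty)}_z=\big(\sum_{k=1}^N\tau_k\big)+\T^{(\infty)}_{z_N}$ with independent summands, one gets $\kappa_4(z)=\sum_{k=1}^N\kappa_4(\tau_k)+\kappa_4(z_N)$, and $\kappa_4(z_N)\to0$ because $\T^{(\infty)}_{z_N}\to0$ in every $L^p$ as $z_N\to\infty$; hence $\sum_{k\ge1}\kappa_4(\tau_k)=\kappa_4(z)$. Therefore $\sigma_z^{-4}\sum_{k\ge1}\EE[(\tau_k-\EE\tau_k)^4]=\tfrac{3\rho}{2-\rho}+\kappa_4(z)/\sigma_z^4$, so everything comes down to $\kappa_4(z)=o(\sigma_z^4)$.

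\noindent\textbf{The cumulant equations.} Letting $x\to\infty$ in the integral identity $\EE_x(e^{\lambda\T_z})=1+2\lambda\int_z^x e^{\gamma(y)}\int_y^\infty\EE_\xi(e^{\lambda\T_z})e^{-\gamma(\xi)}\,d\xi\,dy$ of Theorem \ref{mom} and combining it with the strong Markov identity $\EE_\infty(e^{\lambda\T_z})=\EE_\infty(e^{\lambda\T_\xi})\,\EE_\xi(e^{\lambda\T_z})$ for $0\le z\le\xi$, one checks that $\rho(z,\lambda):=1/\EE_\infty(e^{\lambda\T_z})$ solves $\LL\rho=-\lambda\rho$ with $\rho(\infty,\lambda)=1$ for $\lambda$ near $0$; equivalently $\psi(z,\lambda):=\log\EE_\infty(e^{\lambda\T_z})=\sum_{n\ge1}\tfrac{\lambda^n}{n!}\kappa_n(z)$ satisfies the Riccati equation $\LL\psi=\lambda+\tfrac12(\partial_z\psi)^2$. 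Matching powers of $\lambda$ (and writing $V=\hbox{Var}(\T^{(\infty)}_\bullet)$) gives $\LL\kappa_1=1$, $\LL\kappa_2=(\kappa_1')^2$, $\LL\kappa_3=3\kappa_1'\kappa_2'$, $\LL\kappa_4=4\kappa_1'\kappa_3'+3(\kappa_2')^2$, each $\kappa_n$ vanishing at infinity; since the only solution of $\LL u=0$ vanishing at infinity is $u\equiv0$ (because $\int^\infty e^{\gamma}=\infty$), we recover $\kappa_1=\m$, $\kappa_2=V$ (consistent with \eqref{eq:2}), and $\kappa_n(z)=2\int_z^\infty e^{\gamma(y)}\int_y^\infty(\LL\kappa_n)(\xi)e^{-\gamma(\xi)}\,d\xi\,dy$.

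\noindent\textbf{Asymptotics and conclusion.} Under $\HH_2$, Corollary \ref{cor:1} gives $2\q(y)e^{\gamma(y)}\int_y^\infty e^{-\gamma}\to1$, whence $\m'(z)\sim-\q(z)^{-1}$, $V'(z)\sim-\q(z)^{-3}$, and $2\int_z^\infty e^{\gamma(y)}\int_y^\infty g\,e^{-\gamma}\sim\int_z^\infty g(y)\q(y)^{-1}\,dy$ for $g$ slowly varying relative to $e^{\pm\gamma}$. Iterating these through the equations above yields $\kappa_3(z)\sim3\int_z^\infty\q^{-5}(y)\,dy$, $\kappa_3'(z)\sim-3\q(z)^{-5}$, hence $\kappa_4(z)\sim15\int_z^\infty\q^{-7}(y)\,dy$, while $V(z)\sim\int_z^\infty\q^{-3}(y)\,dy$. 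It then suffices to see $\int_z^\infty\q^{-7}=o\big((\int_z^\infty\q^{-3})^2\big)$: by l'H\^opital this quotient is asymptotic to $\q(z)^{-4}/(2\int_z^\infty\q^{-3})$, which tends to $0$ because $\q(z)^4\int_z^\infty\q^{-3}(y)\,dy\to\infty$ — a further l'H\^opital step reduces the latter to $\tfrac14\q(z)^2/\q'(z)\to\infty$, which is exactly the second part of $\HH_2$. Thus $\kappa_4(z)=o(V(z)^2)$, which together with the first paragraph proves the Lemma. The delicate part is this last block: one must keep the successive l'H\^opital/slow-variation comparisons honest (as in the estimates behind Corollary \ref{cor:1} and Appendix \ref{moments}) and justify that each $\kappa_n$ is the unique solution vanishing at infinity of its linear equation; the reduction in the first paragraph is routine.
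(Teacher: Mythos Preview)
Your reduction in the first paragraph is equivalent to the paper's: the paper expands $\EE[(\T^{(\infty)}_z-\m)^4]$ as a sum over independent $\tau_k$'s, isolates the cross-variance terms, and reduces everything to $\EE[(\T^{(\infty)}_z-\m)^4]/\sigma_z^4\to 3$ (Lemma \ref{eq:fourth}); your cumulant formulation $\EE[(\tau_k-\EE\tau_k)^4]=3\hbox{Var}(\tau_k)^2+\kappa_4(\tau_k)$ together with additivity $\sum_k\kappa_4(\tau_k)=\kappa_4(z)$ is the same identity, stated more cleanly. So both proofs hinge on $\kappa_4(z)=o(\sigma_z^4)$.

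Where you genuinely diverge is in proving that estimate. The paper's Lemma \ref{eq:fourth} is a bare-hands computation: it writes the fourth central moment via the moment recurrence (Proposition \ref{recurrence}), differentiates repeatedly, and pushes through a chain of l'H\^opital reductions until the ratio collapses to $3+9q'/(4q^2)$. Your route---deriving the Riccati equation $\LL\psi=\lambda+\tfrac12(\psi')^2$ for the cumulant generating function, reading off $\LL\kappa_n$ recursively, and then doing the asymptotics---is more structural and would extend to higher cumulants without the combinatorial explosion the paper's method would face. What the paper's approach buys is that every step is an explicit l'H\^opital on concrete integrals, so nothing needs to be justified in the abstract; your ``$2\int e^{\gamma}\int g e^{-\gamma}\sim\int g/q$ for $g$ slowly varying relative to $e^{\pm\gamma}$'' is the right heuristic, but to make it rigorous you must check, for each $g$ that arises (namely $(\m')^2$, $3\m'V'$, $4\m'\kappa_3'+3(V')^2$), that $g'/(qg)\to0$---this holds, using $\LL\m=1$, $\LL V=(\m')^2$, etc., but deserves a line each.

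One small slip: your final l'H\^opital should be applied to $q^{-4}(z)\big/\!\int_z^\infty q^{-3}$, giving $4q'/q^2\to0$ by $\HH_2$; the reciprocal statement ``$q^2/(4q')\to\infty$'' is not what l'H\^opital yields directly (and is ill-posed where $q'$ vanishes or changes sign), though the conclusion $q^4\int q^{-3}\to\infty$ is of course the same. This is exactly the step the paper lands on at the very end of Lemma \ref{eq:fourth}.
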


\begin{proof}
Since  the  random variables $(\tau_k -\EE(\tau_k))$ are independent and centered, and since
$\T_z^{(\infty)}=\sum_{k\geq 1} \tau_k$ has finite moments, we can write 
$$
\EE\left([\T^{(\infty)}_z-\EE(\T^{(\infty)}_z)]^4\right)=\sum\limits_{k\ge 1}
\EE\left([\tau_k-\EE(\tau_k)]^4\right) +  
3 \sum\limits_{k\neq j}
\EE\left([\tau_k-\EE(\tau_k)]^2\right)\EE\left([\tau_j-\EE(\tau_j)]^2\right).
$$
Then, using \eqref{calvar}, 
\begin{align*}
&\EE\left([\T^{(\infty)}_z-\EE(\T^{(\infty)}_z)]^4\right)\\
&=\sum\limits_{k\ge 1} \EE\left([\tau_k-\EE(\tau_k)]^4\right)+3\sum\limits_{k, j} \hbox{Var}(\tau_k)\hbox{Var}(\tau_j)
-3\sum\limits_{k\ge 1}\left( \hbox{Var}(\tau_k)\right)^2\\
&=\sum\limits_{k\ge 1}
\EE\left([\tau_k-\EE(\tau_k)]^4\right)+3\left(\hbox{Var}(\T^{(\infty)}_z)\right)^2 
-3\rho^2\left(\hbox{Var}(\T^{(\infty)}_z)\right)^2 \sum\limits_{k\ge 1}
(1-\rho)^{2(k-1)}\\ 
&=\sum\limits_{k\ge
  1}\EE\left([\tau_k-\EE(\tau_k)]^4\right)+3\left(\hbox{Var}(\T^{(\infty)}_z)\right)^2 
-3\left(\hbox{Var}(\T^{(\infty)}_z)\right)^2 \frac{\rho^2}{1-(1-\rho)^2}\\
&=\sum\limits_{k\ge 1}
\EE\left([\tau_k-\EE(\tau_k)]^4\right)
+3\left(\hbox{Var}(\T^{(\infty)}_z)\right)^2\left(
1- \frac{\rho}{2-\rho}\right). 
\end{align*}
It is proved in  Lemma \ref{eq:fourth} that 
$$\lim\limits_{z \to \infty}
\frac{\EE\left([\T^{(\infty)}_z-\EE(\T^{(\infty)}_z)]^4\right)}
{(\hbox{Var}(\T^{(\infty)}_z))^2}=3 
$$
and we conclude by dividing the last identity by 
$\sigma^4_z=(\hbox{Var}(\T^{(\infty)}_z))^2$.
\end{proof}

\section{Proofs of Theorem \ref{fluctuX} and Remark \ref{rem:remhypop}}

We first derive some consequences of the assumptions of Theorem  \ref{fluctuX} and recall from \eqref{eq:Mgrande} the notation
$
M(z)=\int_{z}^{\infty}\frac{1}{q(x)}\;dx.
$
%$\HH_{1}$,  $\lim\limits_{x\to\infty}q(x)=\infty$,  and   \eqref{eq:H2.5.2}.

\begin{lemma}
\label{conhypop}
Assume $\HH_{1}$, $\lim\limits_{x\to\infty}q(x)=\infty$   and \eqref{eq:H2.5.2}. Then
\begin{enumerate}[(i)]
\item
$
\lim\limits_{z\to\infty} q(z)\sqrt{M(z)}=\infty.
$
\item $\HH_{2}$ holds.
\item $\lim\limits_{z\to \infty} \frac{\m'(z)}{\sqrt{\m(z)}}=0$.
\end{enumerate}
\end{lemma}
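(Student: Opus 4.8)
The plan is to work directly from the asymptotic content of hypotheses \eqref{eq:H2.5.1} and \eqref{eq:H2.5.2}, together with $\HH_1$ and $q\to\infty$, extracting the three conclusions essentially by elementary l'H\^opital-type manipulations of $M(z)=\int_z^\infty q^{-1}$ and its derivative $M'(z)=-1/q(z)$. For $(i)$, I would argue by contradiction. Suppose $q(z)\sqrt{M(z)}$ does not tend to $\infty$; since $q\to\infty$ while $M\to 0$, pick a sequence $z_n\to\infty$ along which $q(z_n)^2 M(z_n)$ stays bounded, say $\le C$. Hypothesis \eqref{eq:H2.5.2} says $\frac{q'}{q}\sqrt{M}\to 0$, i.e. $q'(z)\sqrt{M(z)}=o(q(z))$, which combined with the boundedness of $q\sqrt M$ along $z_n$ forces $q'(z_n)M(z_n)=o(1)$; but $(M q)'=M q' - 1$, so along this sequence $(Mq)'(z_n)\to -1$. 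A function that is positive (namely $Mq>0$, both factors positive) cannot have derivative converging to $-1$ along a sequence going to infinity while staying, itself, bounded below by $0$ — more precisely I would integrate: $Mq$ is a positive function and \eqref{eq:H2.5.2} controls $(Mq)'+1=Mq'$, letting one show $Mq$ is eventually either monotone-decreasing toward a limit (contradicting $(Mq)'\not\to 0$) or large. The cleanest route is: $Mq\to\ell\in[0,\infty]$; if $\ell<\infty$ then $(Mq)'\to 0$ along a subsequence would be needed, yet $(Mq)'=Mq'-1$ and $Mq'=o(q\sqrt M \cdot \sqrt M \cdot q'/q)\cdot(\ldots)$ — I would instead note $Mq'=(Mq)\cdot(q'/q)=(Mq)\cdot\frac{q'\sqrt M}{q\sqrt M}$, and if $Mq$ were bounded then $q'/q\to 0$ is not yet available, so I pass through \eqref{eq:H2.5.2} rewritten as $\frac{q'}{q}\sqrt M\to0$, giving $Mq' = \sqrt M\cdot\frac{q'}{q}\cdot q\sqrt M \to 0$ provided $q\sqrt M$ is bounded — hence $(Mq)'\to -1$ along $z_n$, so $Mq$ is strictly decreasing with slope near $-1$ there, which is incompatible with $Mq>0$ persisting and $z_n\to\infty$. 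This yields $(i)$.

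For $(ii)$ I must verify $\HH_2$, namely $q\to\infty$ (assumed) and $q'/q^2\to 0$. Here I would write $\frac{q'(z)}{q^2(z)}=\frac{q'(z)}{q(z)}\sqrt{M(z)}\cdot\frac{1}{q(z)\sqrt{M(z)}}$. The first factor tends to $0$ by \eqref{eq:H2.5.2} and the second tends to $0$ by part $(i)$ just proved, so the product tends to $0$; thus $\HH_2$ holds. This is the easy step and depends only on combining $(i)$ and \eqref{eq:H2.5.2}.

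For $(iii)$ I would use the asymptotic identities already established in the paper under $\HH_1,\HH_2$: by \eqref{eq:m_M} and the remark following $\HH_2$ (Corollary \ref{cor:1}), $\m(z)\sim M(z)$, and moreover $\m'(z)=-2e^{\gamma(z)}\int_z^\infty e^{-\gamma(\xi)}d\xi$, so that $2q(z)e^{\gamma(z)}\int_z^\infty e^{-\gamma(\xi)}d\xi\to 1$ gives $\m'(z)\sim -1/q(z)$. Therefore
$$
\frac{\m'(z)}{\sqrt{\m(z)}}\sim \frac{-1/q(z)}{\sqrt{M(z)}}=\frac{-1}{q(z)\sqrt{M(z)}},
$$
which tends to $0$ precisely by part $(i)$. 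So $(iii)$ follows once $(i)$ is in hand.

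The main obstacle is $(i)$: turning the two hypotheses into the divergence $q\sqrt M\to\infty$ requires a genuine argument rather than a one-line l'H\^opital, because it is the statement that $M$ does not decay too fast relative to $q$ growing. I expect the clean way is to set $\varphi(z)=q(z)M(z)$, observe $\varphi>0$, compute $\varphi'=q'M-1$, rewrite $q'M=\varphi\cdot\frac{q'}{q}$ and bound $\frac{q'}{q}=\frac{q'\sqrt M}{q\sqrt M}\cdot\frac{1}{\sqrt M}$, and then run a differential-inequality / contradiction argument on $\varphi$ as sketched; care is needed because $\sqrt M\to 0$ so some of these manipulations are only useful once we know $\varphi$ is bounded, which is exactly the contradiction hypothesis — this circularity is what makes it delicate, and resolving it (via monotonicity of $\varphi$ for large $z$, forced by $\varphi'$ being eventually close to $-1$ on the bad sequence and controlled elsewhere) is the crux.
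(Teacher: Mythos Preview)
Your arguments for $(ii)$ and $(iii)$ are correct and essentially match the paper: writing $\frac{q'}{q^2}=\frac{q'}{q}\sqrt{M}\cdot\frac{1}{q\sqrt{M}}$ and using $(i)$ together with \eqref{eq:H2.5.2}, and then invoking Corollary~\ref{cor:1} for $\m'\sim -1/q$ and $\m\sim M$.

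The gap is in $(i)$. Your contradiction argument only produces information \emph{along a subsequence}: you show $(Mq)'(z_n)\to -1$ at the points $z_n$ where $q\sqrt{M}$ stays bounded. But a positive $C^1$ function can perfectly well have derivative tending to $-1$ along a sparse sequence while rebounding in between; nothing prevents $Mq$ from increasing on the gaps $(z_n,z_{n+1})$. You recognise this yourself (``this circularity is what makes it delicate, and resolving it \ldots\ is the crux''), but the proposal never closes the circle. The attempted fix via ``$Mq\to\ell$'' begs the question: you have not shown $Mq$ has a limit, and the only control on $(Mq)'=Mq'-1$ comes from $Mq'=\big(\tfrac{q'}{q}\sqrt{M}\big)\cdot q\sqrt{M}$, which is $o(1)$ \emph{only} where $q\sqrt{M}$ is already bounded. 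So the argument is genuinely incomplete.

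The paper bypasses all of this with a one-line integration. Rewrite \eqref{eq:H2.5.2} as $\frac{M''}{M'}\sqrt{M}\to 0$ (since $M'=-1/q$ and $M''=q'/q^2$). For any $\epsilon>0$ and $z$ large, $M''(z)<-\epsilon\, M'(z)/\sqrt{M(z)}$; the right-hand side is $2\epsilon\,(\sqrt{M})'(z)$ up to sign, so integrating from $z$ to $\infty$ (using $M'(\infty)=0$) gives directly
\[
0\le -M'(z)\le 2\epsilon\,\sqrt{M(z)},
\]
i.e.\ $\frac{1}{q(z)\sqrt{M(z)}}\le 2\epsilon$. This is $(i)$. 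The point you are missing is that the hypothesis is a \emph{global} smallness condition on $M''\sqrt{M}/M'$, and integrating it against the exact antiderivative $-2\sqrt{M}$ converts it into the desired bound without any subsequence extraction or contradiction.
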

\begin{proof}
Condition \eqref{eq:H2.5.2} can be reformulated as
\be
\label{reform}
\lim\limits_{z\to\infty} \frac{M''(z)}{M'(z)}\;\sqrt{M(z)}=0.
\ee
In other words, for any $\epsilon>0$ there exists $z(\epsilon)>0$ such
that for any $z>z(\epsilon)$ we have $M'(z)<0$ and
%$\epsilon \;\frac{M'(z)}{\sqrt{M(z)}}<
$M''(z)<-\epsilon M'(z)/\sqrt{M(z)}$.
Integrating between $z>z(\epsilon)$ and infinity we get
$0\le -M'(z)\le 2\;\epsilon \sqrt{M(z)}$.
This implies
\begin{equation}\label{limM}
\lim\limits_{z\to\infty} \frac{M'(z)}{\sqrt{M(z)}}=0\;,
\end{equation}
which is assertion (\textit{i}).
 
We observe that
$$
\frac{q'(z)}{q(z)^{2}}=\left(\frac{M''(z)}{M'(z)}\;\sqrt{M(z)}\right)\;
\left(-\frac{M'(z)}{\sqrt{M(z)}}\right)
$$
and therefore $\HH_{2}$ holds and (\textit{ii}) is proved. Furthermore, using Corollary \ref{cor:1} $(i)$,$(ii)$ and \eqref{limM}
we deduce $\lim\limits_{z\to \infty} \frac{\m'(z)}{\sqrt{\m(z)}}=0$ and (\textit{iii}) is proven.
\end{proof}

We can now prove Theorem \ref{fluctuX}. According to Lemma \ref{conhypop}, see Remark \ref{rem:remhypop}, 
we can assume that $\HH_2$ holds. For $y\in\RR$ fixed, we define the function
$$
r_y(t)=\m^{-1}(t)+y\,\sqrt t \hbox{ for } t\in\RR_{+}.
$$
Recall that $\m(z)=\EE(\T_z^{(\infty)})$ and $\sigma(z) = \hbox{Var}^{1/2}(\T^{(\infty)}_z)$.

First, we observe that 
\be
\label{inclus}
\left\{\frac{X_t^{\infty}-\m^{-1}(t)}{\sqrt t}<y\right\} \subset \{\T_{r_y(t)}<t\}
\ee
and the derivation of the upperbound of the CLT comes from the CLT for $\T$. Indeed
 $r_{y}(t)\rightarrow \infty$ as $t\rightarrow 0$ since by 
$\HH_{1}$,   $\m(z)\rightarrow 0$ as $z\rightarrow \infty$
and by Lemma \ref{tec} $(i)$,
\be
\lim\limits_{t\searrow 0}\, \frac{t- \m(r_{y}(t))}{\sigma(r_{y}(t))}=\sqrt{\Sigma}\, y.
\ee
Then Theorem \ref{the:2} ensures that
\be
\lim\limits_{t\searrow 0} \PP\big(\T^{(\infty)}_{r_{y}(t)}<t\big)&=&\lim\limits_{t\searrow 0} \PP\bigg(\frac{\T^{(\infty)}_{r_{y}(t)}-\m(r_y(t))}{\sigma(r_y(t))}<\frac{t- \m(r_{y}(t))}{\sigma(r_{y}(t))}\bigg)\nonumber \\
&=&\Phi(\sqrt{\Sigma}\, y), \label{eq:limPHI}
\ee
where 
  $\Phi$ is the cumulative distribution function of   the standard normal distribution.  From \eqref{inclus}, we then obtain
$$
\limsup\limits_{t\searrow 0} \PP\left(\frac{X^{\infty}_t-\m^{-1}(t)}{\sqrt
  t}<y\right)\,\le \Phi\big(\sqrt{\Sigma}\, y\big)
$$
and we now prove a reverse estimate. Let $\epsilon>0$, we have
 \be
 \label{union}
 \big\{\T^{(\infty)}_{r_{y}(t)}\le t\big\} \subset A_t
\bigcup \big\{
X^{\infty}_{t}\le r_{y,\epsilon}(t)
\big\},
\ee
where $r_{y,\epsilon}(t)=r_{y}(t)+\epsilon\sqrt{t}$ and  the event $A_t$ is defined by $$A_t=\big\{
\T^{(\infty)}_{r_{y}(t)}\le t\,; \, \exists\, u\in\big[\T^{(\infty)}_{r_{y}(t)},t\big], \ 
X^{\infty}_{u}>r_{y,\epsilon}(t).
\big\}$$
By the strong Markov property we have
$$
\PP(A_t) \leq \PP\big(\T^{(\infty)}_{r_{y}(t)}\le t\big)\; 
\PP_{r_{y}(t)}\big(\exists\,
s\in\big[0,t\big], \  
X_{s}>r_{y,\epsilon}(t)
\big) 
$$
and using  $\PP_{r_{y}(t)}\big(\exists\,
s\in\big[0,t\big], \  
X_{s}>r_{y,\epsilon}(t)
\big) \leq \PP_{r_{y}(t)}\big(\T_{r_{y,\epsilon}(t)}\le \T_{0}
\big)$ since $0$ is absorbing, we get from \eqref{union}
$$\PP\big(X_{t}^{\infty}\le r_{y,\epsilon}(t)\big) \geq 
\PP\big(\T_{r_{y}(t)}^{(\infty)}\le t\big)\big(1-\PP_{r_{y}(t)}\big(\T_{r_{y,\epsilon}(t)}\le \T_{0}
\big)\big).
$$
As $\{X^{\infty}_t<r_{y,\epsilon}(t)\}=\left\{(X^{\infty}_t-\m^{-1}(t))/\sqrt{
  t}<y+\epsilon\right\}$, we now need to check that for any $\epsilon>0$,
\be
\label{conclu}
\lim\limits_{t\searrow0}
\PP_{r_{y}(t)}\big(\T_{r_{y,\epsilon}(t)}\le \T_{0}\big)=0
\ee
and using  \eqref{eq:limPHI}, we will get 
$$
\liminf\limits_{t\searrow 0} \PP\left(\frac{X^{\infty}_t-\m^{-1}(t)}{\sqrt
  t}<y+\epsilon\right)\geq \Phi\big(\sqrt{\Sigma}\, y\big)
$$
and complete  the proof of 
 Theorem \ref{fluctuX} (replace $y$ by $y-\epsilon$). \\
To prove \eqref{conclu}, we recall  that the scale function
$\Lambda(x)=\int_{0}^{x}e^{\gamma(\xi)}d\xi$ satisfies 
$\LL \Lambda=0, \Lambda(0)=0$. By It\^o's formula, we have for any $x>z>0$,
$\,
\EE_{z}\big(\Lambda\big(\T_{x}\wedge\T_{0}\bi\big)\big)=\Lambda(z)\;. 
$
Therefore
$\,
\PP_{z}\big(\T_{x}<\T_{0}\big)=\frac{\Lambda(z)}{\Lambda(x)}\;
\,$
and
$$
\PP_{r_{y}(t)}\big(\T_{r_{y,\epsilon}(t)}\le T_{0}
\big)=\frac{\Lambda\big(r_{y}(t)\big)}{\Lambda\big(r_{y}(t)+\epsilon\,\sqrt{t}\big)}.
$$
Combing  Lemma \ref{tec}  $(ii-iv)$, the left hand side goes to $0$, which ends the proof of Theorem \ref{fluctuX}. \\

We finish this section with a proof of the fact given in Remark \ref{rem:remhypop} about the hypotheses of Theorem \ref{fluctuX}.

\begin{proposition} Assume that $\HH_1$ and $\HH_2$ hold. We also assume that
\begin{equation}
\label{eq:new_b}
b=\lim\limits_{z\to \infty} q'(z)\int_z^\infty \frac{1}{q(x)}\, dx\in \RR.
\end{equation}
Then, $b\ge 0$ and \eqref{eq:H2.5.1} and  \eqref{eq:H2.5.2} hold with $\Sigma=2b+1$.
\end{proposition}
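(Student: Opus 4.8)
The plan is to work throughout with the function $M(z)=\int_z^\infty q(x)^{-1}\,dx$, for which $M'=-1/q$ and $M''=q'/q^2$, so that hypothesis \eqref{eq:new_b} reads $b=\lim_{z\to\infty} -q'(z)M(z)=\lim_{z\to\infty} q^2(z)M(z)M''(z)$, while equivalently $b=\lim_{z\to\infty} M''(z)M(z)/(-M'(z))$. First I would establish $b\ge 0$: since $\HH_1$ and $\HH_2$ give \eqref{convint}, i.e. $M(z)<\infty$ for large $z$ and $M(z)\to 0$, and since $M$ is positive and strictly decreasing, l'H\^opital applied to $M(z)/q^{-1}(z)=M(z)/(-M'(z))$ shows this ratio tends to $1/(1+b)$ provided the limit defining $b$ exists; positivity of $M$ and of $-M'$ forces $1+b\ge 0$, and a slightly finer argument (if $b<0$ then $-q'M\to b<0$, i.e. $q'M\to -b>0$ eventually, forcing $q$ to increase fast enough that $M$ would not be integrable, contradiction — or simply: $(\,q M\,)'=q'M+qM'=q'M-1\to b-1<0$ is fine, but one checks $b\ge 0$ via the representation below) pins down $b\ge 0$. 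Concretely the cleanest route is: from $M''/(-M')\cdot M\to b$ we get $(\log(-M'))'\cdot M\to -b$ hmm — instead I would use that $(1/(-M'))' = M''/(M')^2$ and integrate; the upshot is $-M'(z)\sim c\,M(z)^{1/(1+b)}$ near infinity for a constant $c>0$, which is integrable at $\infty$ (recall $M\to 0$) precisely when $1/(1+b)>0$ is automatic, but consistency of $M(z)=\int_z^\infty(-M')\,$ with this asymptotic forces $1/(1+b)<1$, i.e. $b>0$, or $b=0$ in the borderline linear case; either way $b\ge 0$.

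Next I would prove \eqref{eq:H2.5.1}, namely that $\Sigma:=\lim_{z\to\infty} M(z)\big/\big(q^2(z)\int_z^\infty q^{-3}\big)$ exists, is finite and positive, and equals $2b+1$. The key step is to evaluate $\int_z^\infty q(x)^{-3}\,dx$ asymptotically by l'H\^opital (here the paper's notation $f\rightsquigarrow g$ from Section 2 is exactly the right device, since we must chain existence of limits). Write the denominator integral as $\int_z^\infty (-M')^3\,dx$; differentiating numerator and denominator of the target ratio and using $M'=-1/q$, $M''=q'/q^2$, one reduces $\Sigma$ to a limit involving $q$, $q'$ and $M$ only, in which $q'(z)M(z)\to -b$ can be substituted. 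I expect the computation to collapse to $\Sigma = 1+2b$ after one or two applications of l'H\^opital, the factor $2$ coming from differentiating $q^2(z)$ and producing a term $2q'(z)/q(z)\cdot(\text{something})\to 2b\cdot(\cdots)$. The finiteness and positivity of $\Sigma$ are then immediate from $b\ge 0$.

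Finally \eqref{eq:H2.5.2}, i.e. $\lim_{z\to\infty}\frac{q'(z)}{q(z)}\sqrt{M(z)}=0$, follows by writing $\frac{q'(z)}{q(z)}\sqrt{M(z)} = \big(q'(z)M(z)\big)\cdot\frac{1}{q(z)\sqrt{M(z)}} = \big(q'(z)M(z)\big)\cdot\frac{-M'(z)}{\sqrt{M(z)}}$, and the first factor converges to $-b\in\RR$ while the second factor is $-M'(z)/\sqrt{M(z)} = \big(\!-2\sqrt{M(z)}\big)'$ which tends to $0$ because $M(z)\to 0$ (more precisely $M'/\sqrt M$ is, up to sign, the derivative of $-2\sqrt M$, and since $\sqrt M$ is positive, decreasing to $0$ and — by the asymptotic $-M'\sim cM^{1/(1+b)}$ derived above, with exponent $<1$ — its derivative must tend to $0$). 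Hence the product tends to $0$. The main obstacle is the second step, the l'H\^opital bookkeeping for $\int_z^\infty q^{-3}$ and the identification of the constant $2b+1$: one has to apply the rule to a ratio both of whose terms vanish, track which derivatives produce the $q'M$ combination, and make sure the chain of "limit exists $\Rightarrow$ limit exists" implications (the $\rightsquigarrow$ mechanism) is respected so that one genuinely concludes existence of $\Sigma$ rather than merely computing it conditionally; I would handle this by always differentiating in the direction that isolates $q'(z)M(z)$ and invoking Corollary \ref{cor:1} together with $\HH_2$ to control the remaining factors.
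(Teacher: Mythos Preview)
Your plan for \eqref{eq:H2.5.1} via l'H\^opital is exactly what the paper does: rewrite the ratio as $\frac{q^{-2}(z)M(z)}{\int_z^\infty q^{-3}}$, differentiate once, and the $q'M$ combination appears immediately, giving $\Sigma=2b+1$. That part is fine.

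The two other parts have genuine gaps. For $b\ge 0$, your argument meanders through an asymptotic $-M'\sim c\,M^{1/(1+b)}$ that you never establish (and whose exponent is in fact $b$, not $1/(1+b)$, since $M''M/(M')^2=q'M\to b$). The paper's argument is one line: since $q(z)\to\infty$, there is a sequence $z_n\to\infty$ with $q'(z_n)>0$; along it $q'(z_n)M(z_n)>0$, so the limit $b$ is $\ge 0$. (Also note a sign slip: $b=\lim q'(z)M(z)$, not $-q'(z)M(z)$.)

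More seriously, your route to \eqref{eq:H2.5.2} is circular. You factor $\frac{q'}{q}\sqrt M=(q'M)\cdot\frac{-M'}{\sqrt M}$ and then need $-M'/\sqrt M=1/(q\sqrt M)\to 0$, i.e.\ $q\sqrt M\to\infty$. But this is precisely Lemma~\ref{conhypop}(i), and the paper proves that lemma \emph{using} \eqref{eq:H2.5.2}. Your justification via the unproved asymptotic for $-M'$ does not close the gap (and in the borderline case $b=0$ it is not even clear that $q\sqrt M\to\infty$ holds). The paper sidesteps the issue completely with a different factorization: square the quantity and write
\[
\Big(\frac{q'(z)}{q(z)}\Big)^2 M(z)=\frac{q'(z)}{q^2(z)}\cdot q'(z)M(z)\longrightarrow 0\cdot b=0,
\]
using only $\HH_2$ and the hypothesis. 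This is the missing idea.
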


\begin{proof} Notice first that since $q(z)\to \infty$, when $z\to \infty$, we conclude there exists a sequence
$z_n\to \infty$ for which $q'(z_n)>0$, which together with the existence of the limit in \eqref{eq:new_b}
imply that $b\ge 0$.

Let us prove \eqref{eq:H2.5.2}, that is,
$\lim\limits_{z\to\infty}\frac{q'(z)}{q(z)}\sqrt{\int_{z}^{\infty}\frac{1}{q(x)}\;dx}=0$.
Notice that 
$$
\left(\frac{q'(z)}{q(z)}\right)^2 \int_{z}^{\infty}\frac{1}{q(x)}\;dx=\frac{q'(z)}{q^2(z)} \, q'(z)\int_{z}^{\infty}\frac{1}{q(x)}\;dx,
$$
which converges to $0$ by $\HH_2$ and \eqref{eq:new_b}.

\medskip
Let us now show \eqref{eq:H2.5.1}. By l'H\^opital's rule, we get that 
$$
\begin{array}{l}
\Sigma=\lim\limits_{z\to\infty}\frac{\int_{z}^{\infty}q(x)^{-1}\;dx}{q^{2}(z)\;\int_{z}^{\infty}q(x)^{-3}dx}=
\lim\limits_{z\to\infty} \frac{q^{-2}(z)\int_{z}^{\infty}q(x)^{-1}\;dx}{\int_{z}^{\infty}q(x)^{-3}dx}\\
\\
=\lim\limits_{z\to\infty} \frac{-2q^{-3}(z)q'(z)\int_{z}^{\infty}q(x)^{-1}\;dx-q^{-3}(z)}{-q^{-3}(z)}
=\lim\limits_{z\to\infty} 2\,q'(z)\int_{z}^{\infty}q(x)^{-1}\;dx+1=2b+1.
\end{array}
$$
The result is shown.
\end{proof}

\section{Proof of Theorem \ref{the:3}}
 We refer to Appendix C and to \cite{Cattiaux} for all  results recalled in this section and the next one. 

\me We assume that $\HH_1$ and $\HH_2$ hold.
We begin by noticing that 
$$
\lambda_{1}(z)> \frac{1}{2\int_z^\infty
  e^{\gamma(y)}\int_y^\infty e^{-\gamma(\xi)}d\xi\,dy}. 
$$
Indeed, let $\nu=\nu_z$ be the unique q.s.d. on $[z,\infty)$. This q.s.d. has a density proportional to
$\psi_{z,1}$ with respect to $\mu_z$.
We know that $\T_z$ is exponentially distributed if the initial law is $\nu$, and the mean of $\T_z$ is $1/\lambda_{1}(z)$, that is
$$
\lambda_{1}(z)=\frac{1}{\EE_\nu(\T_z)}>\frac{1}{\EE_\infty(\T_z)}=
\frac{1}{2\int_z^\infty e^{\gamma(y)}\int_y^\infty e^{-\gamma(\xi)}d\xi\,dy}.
$$

We shall prove that the tail of the distribution of $\T_z$, under $\PP_\infty$, has an exponential tail with decay rate $\lambda_{1}(z)$.
This is a consequence of Theorem \ref{the:3} that we prove now.

\me 
An important result for our proof is the following lemma. 

\begin{lemma} 
\label{lem:0}
For every $z\ge 0$, the process $(e^{\lambda_{1}(z) (t\wedge
  \T_z)} \psi_{z,1}(X_{t\wedge \T_z}):\, t\ge 0)$  
is a nonnegative  martingale and for any
$z<x<w\le \infty$
$$
\psi_{z,1}(x)\EE_w\left(e^{\lambda_{1}(z)\T_x}\right)=\psi_{z,1}(w).
$$
\end{lemma}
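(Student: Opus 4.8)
The plan is the standard ``ground state / Feynman--Kac'' computation. First I would apply It\^o's formula to $e^{\lambda_{1}(z)t}\psi_{z,1}(X_t)$ along $dX=dB-\q(X)\,dt$: since $\LL\psi_{z,1}=-\lambda_{1}(z)\psi_{z,1}$, the finite-variation part cancels and $d\big(e^{\lambda_{1}(z)t}\psi_{z,1}(X_t)\big)=e^{\lambda_{1}(z)t}\psi_{z,1}'(X_t)\,dB_t$. Stopping at $\T_z$ and using $\psi_{z,1}\ge 0$ on $[z,\infty)$, the process $N_t:=e^{\lambda_{1}(z)(t\wedge\T_z)}\psi_{z,1}(X_{t\wedge\T_z})$ is a nonnegative local martingale, hence a supermartingale.

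To promote $N$ to a true martingale it suffices to show $\EE_x(N_t)$ is constant in $t$. Since $\psi_{z,1}(X_{\T_z})=\psi_{z,1}(z)=0$, we have $N_t=e^{\lambda_{1}(z)t}\psi_{z,1}(X_t)\ind_{t<\T_z}$, whence $\EE_x(N_t)=e^{\lambda_{1}(z)t}\,\EE_x\big(\psi_{z,1}(X_t)\ind_{t<\T_z}\big)=e^{\lambda_{1}(z)t}e^{-\lambda_{1}(z)t}\psi_{z,1}(x)=\psi_{z,1}(x)$, the middle step being the eigenfunction identity $\EE_x(\psi_{z,1}(X_t)\ind_{t<\T_z})=e^{-\lambda_{1}(z)t}\psi_{z,1}(x)$ for the killed semigroup, which is standard from the spectral description of $L_z$ recalled in Appendix~C. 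A nonnegative supermartingale with constant mean is a martingale, so $N$ is a martingale. (Alternatively, localise $N$ at the hitting times of high levels, on which $\psi_{z,1}$ is bounded, to get a bounded martingale on each finite horizon, then let the level go to $\infty$, using nonexplosion and the boundedness of $\psi_{z,1}$ on $[z,\infty)$ recalled in Appendix~C.)

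For the identity, fix first $z<x<w<\infty$. Since $X^w$ is continuous and starts above $x$, it visits $x$ before $z$, so $\T_x\le\T_z$ and $\T_x<\infty$ $\PP_w$-a.s.; in particular $t\wedge\T_x\wedge\T_z=t\wedge\T_x$, and optional stopping of the martingale $N$ at the bounded time $t\wedge\T_x$ yields $\EE_w\big(e^{\lambda_{1}(z)(t\wedge\T_x)}\psi_{z,1}(X_{t\wedge\T_x})\big)=\psi_{z,1}(w)$ for every $t\ge 0$. Letting $t\to\infty$, continuity of $X$ together with $\T_x<\infty$ give $e^{\lambda_{1}(z)(t\wedge\T_x)}\psi_{z,1}(X_{t\wedge\T_x})\to e^{\lambda_{1}(z)\T_x}\psi_{z,1}(x)$ a.s. The key point is that Fatou's lemma already gives $\psi_{z,1}(x)\,\EE_w(e^{\lambda_{1}(z)\T_x})\le\psi_{z,1}(w)<\infty$, hence $\EE_w(e^{\lambda_{1}(z)\T_x})<\infty$ because $\psi_{z,1}(x)>0$; consequently $e^{\lambda_{1}(z)(t\wedge\T_x)}\psi_{z,1}(X_{t\wedge\T_x})\le\|\psi_{z,1}\|_\infty\,e^{\lambda_{1}(z)\T_x}\in L^{1}(\PP_w)$ uniformly in $t$, and dominated convergence upgrades the optional-stopping identity to $\psi_{z,1}(w)=\psi_{z,1}(x)\,\EE_w(e^{\lambda_{1}(z)\T_x})$.

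To pass to $w=\infty$, let $w\to\infty$: by monotonicity of the flow (Section~\ref{sec:P_infty}) $\T_x^{(w)}\uparrow\T_x^{(\infty)}$, so $\EE_w(e^{\lambda_{1}(z)\T_x})\uparrow\EE_\infty(e^{\lambda_{1}(z)\T_x^{(\infty)}})$ by monotone convergence, while $\psi_{z,1}(w)\to\psi_{z,1}(\infty)$, which gives the case $w=\infty$ (and, incidentally, $\EE_\infty(e^{\lambda_{1}(z)\T_x^{(\infty)}})<\infty$ and $\psi_{z,1}(\infty)>0$). The only external inputs are the spectral facts of Appendix~C, namely the pointwise eigenfunction relation for the killed semigroup and the boundedness of $\psi_{z,1}$; I expect the only genuinely delicate point to be the ``Fatou first, dominated convergence second'' maneuver, which is what turns the a priori inequality into the desired equality and which relies on having the true martingale property (not merely the supermartingale property) in hand.
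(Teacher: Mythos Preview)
Your argument is correct, and in one respect cleaner than the paper's. Both proofs start from It\^o's formula to get the local martingale, localise at the hitting times $\T_M$ of high levels, and pass to $w=\infty$ by monotone convergence. The substantive difference is how the integrability of $e^{\lambda_{1}(z)\T_x}$ under $\PP_w$ is obtained. The paper imports it from outside: it invokes the strict monotonicity $\lambda_{1}(z)<\lambda_{1}(x)$ (Proposition~\ref{prop:b2}(i)) and then cites \cite{Cattiaux}, Corollary~7.9, to get $\sup_{y\ge x}\EE_y(e^{\lambda'\T_x})<\infty$ for some $\lambda'\in(\lambda_{1}(z),\lambda_{1}(x))$, and this is what drives the limit in $M$. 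You instead extract the integrability internally, from the martingale identity itself via Fatou's lemma, and then close with dominated convergence against $\|\psi_{z,1}\|_\infty\,e^{\lambda_{1}(z)\T_x}$. This avoids the external reference and the monotonicity of $\lambda_{1}(\cdot)$, at the cost of first establishing the \emph{true} martingale property on the full line, which you do via the semigroup eigen-identity (or, as in your parenthetical remark, via localisation plus boundedness of $\psi_{z,1}$ from Proposition~\ref{prop:b2}(ii) --- this second route is safer, since it does not require the $L^2$ eigen-relation to hold pointwise). Either approach is valid; yours is more self-contained, the paper's exploits machinery that is needed elsewhere anyway.
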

\begin{proof}  Recall that ${\cal L} \psi_{z,1} = - \lambda_{1}(z)\psi_{z,1}$ and then the process is a local martingale for any initial condition.  In Appendix Theorem C.1 and Proposition C.2 (ii), it is proved that  $\psi_{z,1}$ is positive and bounded. 
We recall that $\lambda_{1}(\bullet)$ is strictly increasing, so $\lambda_{1}(z)<\lambda_{1}(x)$. On the other
hand under $\HH_1, \HH_2$  for any 
$\lambda'<\lambda_{1}(x)$  (see \cite{Cattiaux} Corollary 7.9)
$$
\sup_{y\ge x} \EE_y(e^{\lambda' \T_x})<\infty.
$$
In particular, $\EE_\infty(e^{\lambda' \T_x})=\sup_{y\ge x} \EE_y(e^{\lambda' \T_x})<\infty$, which  gives the needed uniform integrability.
In what follows we consider $\lambda_{1}(z) <\lambda'<\lambda_{1}(x)$.
\smallskip

For a large $M> x$, the It\^o's formula shows that 
$(e^{\lambda_{1}(z) (t\wedge \T_z\wedge \T_M)} \psi_{z,1}(X_{t\wedge \T_z\wedge \T_M}):\, t\ge 0)$ is a martingale
and therefore for every $x<w<M$ we get (by Doob's sampling Theorem)
$$
\begin{array}{l}
\psi_{z,1}(w)=
\EE_w\left(e^{\lambda_{1}(z) (\T_x\wedge \T_M)} \psi_{z,1}(X_{\T_x\wedge \T_M})\right)\\
=\psi_{z,1}(M) \EE_w\left(e^{\lambda_{1}(z) \T_M}, \T_M<\T_x\right)+
\psi_{z,1} \EE_w\left(e^{\lambda_{1}(z) \T_x}, \T_x<\T_M\right).
\end{array}
$$
The first term tends to zero, as $M\to \infty$,  because: $\psi_{z,1}$ is a bounded function;
$$
\EE_w\left(e^{\lambda_{1}(z) \T_M}, \T_M<\T_x\right)\le \EE_w\left(e^{\lambda' \T_x}, \T_M<\T_x\right)
$$
and $\PP_w\left(\T_M<\T_x\right)\to 0$ (notice that $e^{\lambda' \T_x}$ is integrable).
The second term converges to $\psi_{z,1}(x) \EE_w\left(e^{\lambda_{1}(z) \T_x}\right)$ by the Monotone
Convergence Theorem and the result is shown
for finite $w$. Using again that $\psi_{z,1}(\bullet)$ is increasing and bounded, we can pass to the limit $w\to \infty$ to include
this case as well
$$
\psi_{z,1}(\infty)=
\psi_{z,1}(x) \EE_\infty\left(e^{\lambda_{1}(z) \T_x}\right).
$$

\end{proof}

\begin{proof}[Proof of Theorem \ref{the:3}] 
We shall give a proof of $(i)$ for the extreme measure
$\eta=\delta_\infty$.  
For the sake of simplicity we take $z=0$ and we denote by 
$\underline\lambda=\lambda_{1}(0), \underline\psi=\psi_{0,1}$.
For any $x>0$ we have that $\lambda_{1}(x)>\underline \lambda$. As we have seen, $\HH_1$ and $ \HH_2$
imply that for any $\lambda'<\lambda_{1}(x)$
$$
C(\lambda')=\EE_\infty(e^{\lambda' \T_x})=\sup_{w\ge x} \EE_w(e^{\lambda' \T_x})<\infty,
$$
and we conclude that 
$$
\limsup\limits_{t\to \infty} e^{\lambda' t}\, \PP_\infty(\T_x>t)\le C(\lambda').
$$
Now, we fix $\lambda'$ such that $\underline\lambda<\lambda'<\lambda_{1}(x)$ and we use the strong Markov property to get
$$
\PP_\infty(\T_0>t)=\PP_\infty(\T_x>t)+\int_0^t \PP_x(\T_0>u) \, \PP_\infty( \T_x \in d(t-u)).
$$
On the other hand, for every $\epsilon>0$, we have the bound 
$$
\int_0^{\epsilon t} \PP_x(\T_0>u) \, \PP_\infty( \T_x \in d(t-u))\le \PP_\infty( \T_x >(1-\epsilon)t)=\mathcal{O} (e^{-\lambda' (1-\epsilon) t}).
$$
So, if we take $\epsilon=\epsilon(x)>0$ small enough such that $\underline\lambda<\lambda' (1-\epsilon)$, we get 
$$
\int_0^{\epsilon t} \PP_x(\T_0>u) \, \PP_\infty( \T_x \in d(t-u))=o(e^{-\underline\lambda t}).
$$
We also have that $\PP_\infty(\T_x>t)=o(e^{-\underline\lambda t})$.

Now, we use Theorem 5.2  in  \cite{Cattiaux} that gives
$$
\lim\limits_{s\to \infty} e^{\underline\lambda s}\,  \PP_x(\T_0>s)=
\underline\psi(x)\int_0^\infty \underline\psi(y)\, 2e^{-\gamma(y)}\,dy.
$$
Thus, for every $\delta>0$ there exists $t_0=t_0(x)$, such that for all $s>t_0$
$$
\left|\frac{ \PP_x(\T_0>s)}{e^{-\underline\lambda s} \underline\psi(x)\int_0^\infty \underline\psi(y)\, 2e^{-\gamma(y)}\,dy}-1\right|\le \delta.
$$
Hence, if $\,\epsilon t>t_0$ we have
$$
\begin{array}{l}
\int_{\epsilon t}^t \PP_x(\T_0>u) \, \PP_\infty( \T_x \in d(t-u))\\ 
\\
\le(1+\delta)\left(\underline\psi(x)\int_0^\infty \underline\psi(y)\, 2e^{-\gamma(y)}\,dy\right)
\int_{\epsilon t}^t  e^{-\underline\lambda u} \, \PP_\infty( \T_x \in d(t-u))\\
\\
=2e^{-\underline\lambda t} (1+\delta)\int_0^\infty \underline\psi(y) e^{-\gamma(y)}\,dy\, 
\left(\underline\psi(x) \int_0^{(1-\epsilon) t}  e^{\underline\lambda u} \, \PP_\infty( \T_x \in du)\right).
\end{array}
$$
The conclusion is 
$$
\limsup\limits_{t\to \infty} e^{\underline\lambda\, t}\,  \PP_\infty(\T_0>t)\le 
2(1+\delta)\int_0^\infty \underline\psi(y) e^{-\gamma(y)}\,dy \,
\left( \underline\psi(x)\EE_\infty(e^{\underline\lambda \T_x}) \right).
$$
We can take $\delta\downarrow 0$ and use 
$\underline\psi(x)\EE_\infty(e^{\underline\lambda \T_x})=\underline\psi(\infty)$
(see Lemma \ref{lem:0}) to get
$$
\begin{array}{lll}
\limsup\limits_{t\to \infty} e^{\underline\lambda\, t}\,  \PP_\infty(\T_0>t)
&\hspace{-0.2cm}\le&\hspace{-0.2cm} 2\underline\psi(\infty) \int_0^\infty \underline\psi(y) e^{-\gamma(y)}\,dy\\ 
\\
&\hspace{-0.2cm}=&\hspace{-0.2cm}\int_0^\infty \underline\psi(x) \, \delta_\infty(dx) \int_0^\infty \underline\psi(y) e^{-\gamma(y)}\,dy. 
\end{array}
$$
The same lower bound is obtained for the $\liminf$ and $(i)$ is shown.

\medskip

Part $(ii)$ of Theorem \ref{the:3} is shown in the same way, using
again Theorem 5.2 in \cite{Cattiaux}. 
\end{proof}

\smallskip

\begin{remark} 
Obviously we have $\PP_{\rho} (\T_z>t)\le \PP_\infty(\T_z>t)$ and we
can show that both tail distributions are equivalent, namely 
$$
\PP_{\rho} (\T_z>t) \le \PP_\infty(\T_z>t)\le \m(z) \PP_{\rho} (\T_z>t-1).
$$
For this purpose  we use formula (\ref{eq:general}), with the function
$f(T)=(T-t)_+$, we obtain 
$$
\EE_\infty((\T_z-t)_+)=2 \int_z^\infty e^{\gamma(y)} \int_y^\infty
\EE_\xi(\T_z>t) \, e^{-\gamma(\xi)} \, d\xi dy. 
$$
This relation can be written as $\EE_\infty((\T_z-t)_+)= \m(z)
\PP_{\rho} (\T_z>t)$,  
where $\rho$ is the probability measure whose density is
$\frac{1}{\m(z)} \int_z^x e^{\gamma(y)-\gamma(z)} dy \, \ind_{x\ge z}$, with respect to the speed 
measure $2 e^{-(\gamma(x)-\gamma(z))} \,dx$. 
In this way we get 
$$
\PP_\infty(\T_z>t+1)\le \EE_\infty((\T_z-t)_+)= \m(z) \PP_{\rho} (\T_z>t).
$$
\end{remark}

\smallskip

\bigskip

\section{Proofs of Theorems \ref{densite} and \ref{ratio}}

Recall that (see \cite{Cattiaux}) for finite $z>0$, we have  
\be \label{densite}\PP_z(X_t\in dx, \T_0>t)=\Gamma(t,z,x)dx=r(t,z,x)\, 2e^{-\gamma(x)} dx\ee with
$$
r(t,z,x)=\sum\limits_{k\ge 1} e^{-\lambda_k t} \psi_k(z) \psi_k(x),
$$
where $\lambda_k=\lambda_{0,k}$ and $\psi_k=\psi_{0,k}$.
This series is convergent in $L^2(d\mu)$ for all $t>0$ and
$$
\|r(t,z,\bullet)\|^2_{L^2}=\int_0^\infty r(t,z,x)^2 \, 2e^{-\gamma(x)}\, dx=\sum\limits_{k\ge 1} e^{-2\lambda_k t} \psi_k(z)^2<\infty.
$$

So, a natural candidate for a density of $\PP_\infty$ is the series
$$
r(t,\infty,x)=\sum\limits_{k\ge 1} e^{-\lambda_k t} \psi_k(\infty) \psi_k(x).
$$
The quantities $\psi_{k}(\infty)$ are well defined, see Proposition
\ref{prop:b2}(ii).   The first thing to show is that this series converges in $L^2$, that is, 
$$\sum\limits_{k\ge 1} e^{-2\lambda_k t} \psi_k(\infty)^2<\infty.$$
Then, we shall show that $\,r(t,\infty,\bullet)$ is in fact the desired density.

It is clear that we require a control on the growth of $(\psi_k(\infty))_k$ and more generally we need a control
on $(\|\psi_k\|_\infty)_k$. Theorem  \ref{densite} provides this
control
 using the extra hypothesis $\HH_{3}$ on $q$.

\noindent
\begin{proof}[ Proof of Theorem \ref{densite}]  
Part  $(i)$ is an immediate consequence of the bounds provided by 
Proposition \ref{pro:bound4r} and Corollary \ref{cor:2}.
The fact, that $r(t,z,\bullet)$ and $r(t,\infty,\bullet)$ are bounded continuous functions
follows from the uniform convergence (in the $x$ variable) of  the partial sums 
$$
\sum_{k=1}^n e^{-\lambda_k t} \psi_k(z) \psi_k(x) \hbox{ and } 
\sum_{k=1}^n e^{-\lambda_k t} \psi_k(\infty) \psi_k(x).
$$
On the other hand
$$
\sup_{y\ge z; x\ge 0} |r(t,\infty,x)-r(t,y,x)|\le 
\sum_k e^{-\lambda_k t} \|\psi_k\|_\infty \sup_{y\ge z}|\psi_k(\infty)-\psi_k(y)| \to 0
$$
as $z\to \infty$. Hence $r(t,\infty,\bullet)$ is a bounded continuous function and the convergence
of $r(t,z,\bullet)$ to $r(t,\infty,\bullet)$ holds in all $L^p$.

\medskip

\noindent$(ii)$ Recalling  the monotone convergence of $X^z$ to $X^{\infty}$, we have
\ben
&&\mathbb E_\infty(f(X_t), \T_0>t)=\lim\limits_{z\to \infty} \mathbb E_z(f(X_t), \T_0>t)\\
&& \qquad =
\lim\limits_{z\to \infty} \int_0^\infty f(x) r(t,z,x) \, 2e^{-\gamma(x)} dx
=\int_0^\infty f(x) r(t,\infty,x) \, 2e^{-\gamma(x)} dx
\een
for any non-decreasing function and then for any bounded and continuous function $f$. 
This shows that $ r(t,\infty,x) \, 2e^{-\gamma(x)} dx$ is a density for the measure $\PP_\infty(X_t\in \bullet \, ; \, \T_0>t)$.
The rest follows directly.
\end{proof}

\begin{remark} Notice that $\|r(t,\infty,\bullet)\|^2_{L^2}=\sum_k e^{-2\lambda_k t} \psi^2_k(\infty)$.
\end{remark}

As consequence of Corollary \ref{cor:2}, the function $r(t,\bullet,\bullet)$ is uniformly continuous in $\overline\RR_+^2$.
Indeed, we have
$$
\begin{array}{l}
|r(t,x,y)-r(t,x',y')|\le \sum_k e^{-\lambda_k t} \|\psi_k\|_\infty (|\psi_k(x)-\psi_k(x')| + |\psi_k(y)-\psi_k(y')|)\\
\le A \sum_k e^{-\lambda_k t} \|\psi_k\|^2_\infty \lambda_k\,\, (|\int_{x}^{x'} \frac{1}{q(u)+B} du| + 
|\int_{y}^{y'} \frac{1}{q(u)+B} du|).
\end{array}
$$
An important conclusion is that $$\mathscr{H}(z,y):=\inf\limits_{[z,\infty]\times[y,\infty]} r(t,\bullet,\bullet)>0,$$ for $z>0, y>0$. Indeed, by continuity
we have $\mathscr{H}(z,y)=r(t,x^*,u^*)$ for some $(x^*,u^*)\in [z,\infty]\times[y,\infty]$. If $x^*$ and $u^*$ are finite, the
positivity of $r(t,x^*,u^*)$ is a consequence of Maximum Principle. Indeed  $\Gamma(\bullet,\bullet,x)$ defined in \eqref{densite} is a non-negative solution of the heat equation $\LL v=\frac{\partial}{\partial t} v$.
If $x^*=\infty$  but
$u^*$ finite, the function $r(\bullet, \infty,\bullet)$ also satisfies a heat equation and we conclude similarly.  The symmetry of $r$ yields then the result 
 in the case
$x^*$ finite and $u^*=\infty$. Finally, the last case  $x^*=u^*=\infty$  is obtained from
$r(t,\infty,\infty)=\sum_k e^{-\lambda_k t} \psi_k^2(\infty)>0$. 

\medskip

Now, we control the behavior of $r(t,z,x)$ for $x$ near $0$. Of course we have $r(t,z,0)=0$.
Let us prove that $\frac{\partial}{\partial x} r(t,z,0)>0$ for all $t>0,z>0$. Indeed,
\ben
\frac{\partial}{\partial x} r(t,z,0)&=\sum_k e^{-\lambda_k t} \psi_k(z)\psi_k'(0)=
2 \int_0^\infty \sum_k \lambda_k e^{-\lambda_k t} \psi_k(z)\psi_k(\xi) e^{-\gamma(\xi)} \, d\xi\\
&=-\frac{\partial}{\partial t} \int_0^\infty \sum_k e^{-\lambda_k t} \psi_k(z)\psi_k(\xi) 2e^{-\gamma(\xi)} \, d\xi
=-\frac{\partial}{\partial t} \PP_z(\T_0>t)\ge 0.
\een
The last inequality holds, because $\PP_z(\T_0>t)$ is decreasing. Actually, we have proved that
$\frac{\partial}{\partial x} r(t,z,0)$ is the density of $\T_0$ starting from $z$ (a relation that holds under quite general conditions).
On the other hand, $v(t,z)=\frac{\partial}{\partial x} r(t,z,0)\ge 0$ satisfies the heat equation $\LL v=\frac{\partial}{\partial t} v$,
with boundary condition $v(t,0)=0$, for all $t>0$. Therefore, Maximum Principle proves
that $\frac{\partial}{\partial x} r(t,z,0)>0$ on $(0,\infty)^2$.

\medskip

Similarly, $\frac{\partial}{\partial x} r(t,\infty,0)$ is the density of $\T_0$ under $\PP_\infty$. So, we conclude it is not
negative and we prove now it is strictly positive. For that purpose consider $0<s<t$ and recall \eqref{eq:13}, so
$$
\PP_\infty(\T_0>t)=\EE_\infty(\mathbb P_{X_{s}} (\T_0>t-s);  \T_0>s )
=\int_0^\infty r(s,\infty,z) 2e^{-\gamma(z)} \PP_z(\T_0>t-s) \, dz.
$$
The conclusion is $\frac{\partial}{\partial x} r(t,\infty,0)=\int_0^\infty r(s,\infty,z) 2e^{-\gamma(z)} \frac{\partial}{\partial x} r(t-s,z,0) \, dz>0$.

\medskip

\begin{proof}[Proof of Theorem \ref{ratio}]

 We notice that for $z\ge z_0$, $y_0>0$ and some finite constants $A,B$ (see Corollary \ref{cor:2})
\ben
\sup\limits_{x\ge y_0; w\ge z} \left|\frac{r(t,\infty,x)}{r(t,w,x)}-1\right| &\le & \frac{1}{\mathscr{H}(z_0,y_0)}  
\sup\limits_{x\ge y_0, w\ge z} |r(t,\infty,x)-r(t,w,x)|
\\
&\le& \frac{A}{\mathscr{H}(z_0,y_0)} \sum_k e^{-\lambda_k t} \|\psi_k\|^2_\infty \lambda_k\, \int_{z}^\infty \frac{1}{q(u)+B} du,
\een
which converges to $0$ as $z$ converges to $\infty$, recalling \eqref{convint}.

To finish the proof we need to bound, for some $y_0>0$ and large $z$, the ratio
$$
\sup\limits_{x\le y_0; w\ge z} \left|\frac{r(t,\infty,x)}{r(t,w,x)}-1\right|.
$$
Let us estimate, for $w\wedge w'\ge z, 0\le x\wedge x'\le x\vee x' \le y_0$,
$$
\begin{array}{l}
|\sum_k e^{-\lambda_k t} \psi_k(w) \psi_k'(x)-\sum_k e^{-\lambda_k t} \psi_k(w') \psi_k'(x')|\le\\
\\
\sum_k e^{-\lambda_k t} |\psi_k(w) -\psi_k(w')| \max\limits_{0\le y\le y_0}| \psi_k'(y)|+
\sum_k e^{-\lambda_k t} |\psi_k|_\infty |\psi_k'(x)-\psi_k'(x')|.
\end{array}
$$
Letting $0\le x\le x'\le y_0$ and using the expression of $\psi_k$ given in (C.2),
$$
\begin{array}{l}
|\psi_k'(x)-\psi_k'(x')|=2\lambda_k e^{\gamma(x)} \left|\int_x^\infty e^{-\gamma(\xi)}
\psi_k(\xi)\, d\xi-e^{\gamma(x')-\gamma(x)} \int_{x'}^\infty e^{-\gamma(\xi)}
\psi_k(\xi)\, d\xi \right|\\
\\
\le \lambda_k e^{\gamma(x)} |e^{\gamma(x')-\gamma(x)}-1| \int_0^\infty 2e^{-\gamma(\xi)} |\psi_k(\xi)|\, d\xi
+\lambda_k e^{\gamma(x)}  \int_x^{x'} 2e^{-\gamma(\xi)} |\psi_k(\xi)|\, d\xi\\
\\
\le C \lambda_k |x-x'|,
\end{array}
$$
where  $C=C(y_0)$.
Similarly, $|\psi_k'(y)|\le D \lambda_k$, for $0\le y\le y_0$ and some finite constant $D=D(y_0)$.
We use Proposition \ref{pro:bound4r} and Corollary \ref{cor:2} with $\epsilon=t/4$ to get, for $w\wedge w'\ge z, 0\le x\wedge x'\le x\vee x'\le y_0$
$$
\left|\frac{\partial}{\partial x} r(t,w,x)-\frac{\partial}{\partial x} r(t,w',x')\right|\le F \left(\int_w^\infty \frac{1}{q(u)+B} du + |x-x'|\right),
$$
for some finite constant $F=F(t,y_0)$ (notice that $F$ gets larger as $t$ gets smaller). We use this bound for $w'=\infty, x'=0, y_0\le 1$ to get
$$
\left|\frac{\partial}{\partial x} r(t,w,x)-\frac{\partial}{\partial x} r(t,\infty,0)\right|\le \overline F \left(\int_w^\infty \frac{1}{q(u)+B} du +  y_0\right)
$$
and $\overline F=F(t,1)$. Hence, if $z_0$ is large enough and  $y_0$ is small enough (depending on $t$), we get
$\overline F \left(\int_{z_0}^\infty \frac{1}{q(u)+B} du + y_0 \right)\le \frac{1}{2} \frac{\partial}{\partial x} r(t,\infty,0)$ and then
for all $w\ge z\ge z_0, 0\le \xi \le y_0$
$$
\frac{\partial}{\partial x} r(t,w,\xi)\ge \frac{1}{2} \frac{\partial}{\partial x} r(t,\infty,0)>0.
$$
This bound gives the inequality
\ben
\sup\limits_{w\ge z; 0\le x\le y_0} \left|\frac{r(t,\infty,x)}{r(t,w,x)}-1\right| 
\le \sup\limits_{w\ge z; 0\le \xi\le y_0} \left|\frac{\frac{\partial}{\partial x} r(t,\infty,\xi)}{\frac{\partial}{\partial x} r(t,w,\xi)}-1\right| \qquad \qquad \qquad \qquad &&\\
 \quad \le  \frac{\sup\limits_{w\ge z; 0\le \xi \le y_0} |\frac{\partial}{\partial x} r(t,\infty,\xi)-\frac{\partial}{\partial x} r(t,w,\xi)|}
{\inf\limits_{w\ge z_0; 0\le \xi \le y_0} \frac{\partial}{\partial x} r(t,w,\xi)}
\le \frac{ 2 \overline F}{ \frac{\partial}{\partial x} r(t,\infty,0)} \int_{z}^\infty \frac{1}{q(u)+B} du .&&
\een
The result is shown.
\end{proof}
\begin{remark} We have proved: For all $t>0$ there exist $z_0=z_0(t)>0, G=G(t)<\infty$ such that for all $z\ge z_0$,
$$
\sup\limits_{x\ge 0} \left|\frac{r(t,\infty,x)}{r(t,z,x)}-1\right|\le G \int_{z}^\infty \frac{1}{q(u)} du.
$$
\end{remark}

\appendix
\section{First basic estimations}
\label{App:A}

In this section we shall get some basic asymptotic relations among different quantities that we use in the rest of the article.
%We assume through this section that $\HH_1,\HH_2$ holds. 
The first tool is a consequence of L'H\^opital's rule.
\smallskip

\begin{lemma} 
\label{lem:1}
Assume that $H$ is an integrable (eventually) strictly decreasing positive function that
satisfies $$\lim\limits_{z\to \infty} \frac{H^2(z)}{H'(z)}=0 \quad  \text{and} \quad \lim\limits_{z\to \infty} \frac{H(z)H''(z)}{(H'(z))^2}=a\in \overline\RR\setminus\{2\},$$
 then
$$
\lim\limits_{z\to \infty} \frac{\int_z^\infty H(y) \, dy}{\frac{H^2(z)}{-H'(z)}}=\frac{1}{2-a}.
$$
For $a=2$, the result holds  assuming also
$\frac{H(z)H''(z)}{(H'(z))^2}\neq 2$ for  $z$ large enough.
\end{lemma}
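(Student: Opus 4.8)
The plan is to apply L'Hôpital's rule for the indeterminate form $0/0$ at infinity, after checking its hypotheses, the only real work being the differentiation of the denominator and the verification that this derivative does not vanish near infinity.

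First I would set $f(z)=\int_z^\infty H(y)\,dy$ and $g(z)=\frac{H^2(z)}{-H'(z)}$. Since $H$ is eventually strictly decreasing we have $H'<0$ for large $z$, so $g>0$ there; and $f(z)\to 0$ because $H$ is integrable, while $g(z)\to 0$ is exactly the first hypothesis $H^2/H'\to 0$. Thus the ratio is of the form $0/0$. Differentiating, $f'(z)=-H(z)$ and
$$
g'(z)=\frac{2H(z)H'(z)\,(-H'(z))-H^2(z)\,(-H''(z))}{(H'(z))^2}=H(z)\left(\frac{H(z)H''(z)}{(H'(z))^2}-2\right).
$$
Because $H(z)>0$ and $\frac{H(z)H''(z)}{(H'(z))^2}\to a\neq 2$, the factor $\frac{HH''}{(H')^2}-2$ is bounded away from $0$ for large $z$ when $a$ is finite, and diverges when $a=\pm\infty$; in every case $g'(z)\neq 0$ for $z$ large. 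Hence L'Hôpital applies and
$$
\lim_{z\to\infty}\frac{f(z)}{g(z)}=\lim_{z\to\infty}\frac{f'(z)}{g'(z)}=\lim_{z\to\infty}\frac{-H(z)}{H(z)\left(\frac{H(z)H''(z)}{(H'(z))^2}-2\right)}=\lim_{z\to\infty}\frac{1}{2-\frac{H(z)H''(z)}{(H'(z))^2}}=\frac{1}{2-a},
$$
with the convention $\frac{1}{2-(\pm\infty)}=0$.

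For the boundary case $a=2$, I would use the extra hypothesis $\frac{HH''}{(H')^2}\neq 2$ for $z$ large, which again gives $g'(z)\neq 0$. Since $\frac{HH''}{(H')^2}$ is continuous (as $H\in\mathcal{C}^2$ with $H'<0$) and never equals $2$ on some $[z_0,\infty)$, it is either always $>2$ or always $<2$ there; the first alternative would make $g$ eventually non-decreasing, contradicting $g>0$, $g\to 0$. So $\frac{HH''}{(H')^2}<2$ eventually and $\frac{f'}{g'}=\frac{1}{2-\frac{HH''}{(H')^2}}\to+\infty=\frac{1}{2-a}$, and L'Hôpital concludes. The main obstacle is precisely this sign/non-vanishing bookkeeping around $g'$, especially in the degenerate case $a=2$ and the infinite cases $a=\pm\infty$; the algebraic core — computing $g'$ and cancelling the factor $H$ — is routine.
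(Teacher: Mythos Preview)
Your proof is correct and follows exactly the same approach as the paper: apply L'H\^opital's rule to the $0/0$ form, compute $g'(z)=H(z)\big(\tfrac{HH''}{(H')^{2}}-2\big)$, and cancel the factor $H$. You have in fact supplied more detail than the paper's proof, which is a one-line computation; your verification that $g'\neq 0$ near infinity and your treatment of the case $a=2$ (ruling out $\tfrac{HH''}{(H')^{2}}>2$ via the monotonicity contradiction with $g\to 0$) are not spelled out in the original.
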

\begin{proof} Using L'H\^opital's rule, we get
$$
\frac{\int_z^\infty H(y) \, dy}{\frac{-H^2(z)}{H'(z)}} \rightsquigarrow \frac{-H}{\frac{-2H(H')^2+H^2H''}{(H')^2}}=
 \frac{1}{2-\frac{HH''}{(H')^2}}
$$
and the lemma is shown.
\end{proof}

\medskip
The following result is an application of this Lemma
\smallskip

\begin{corollary} 
\label{cor:1}
Assume that $\q$ satisfies $\HH_1$ and $\HH_2$. Then, the following limits hold
\begin{align}
(i) \qquad &\lim\limits_{z\to \infty} q(z)\m'(z)= \lim\limits_{z\to \infty} 2\q(z) e^{\gamma(z)} \int_z^\infty e^{-\gamma(\xi)} \,d\xi =1,  \\
(ii) \qquad &\lim\limits_{z\to \infty} \frac{\m(z)}{M(z)}= \lim\limits_{z\to \infty} \frac{\int_z^\infty 2e^{\gamma(y)} \int_y^\infty e^{-\gamma(\xi)} \,d\xi \,dy}
{\int_z^\infty\frac{1}{\q(y)}\, dy}=1,  \\
(iii) \qquad &\lim\limits_{z\to \infty} \frac{\int_z^\infty e^{\gamma(y)} \left(\int_y^\infty e^{-\gamma(\xi)} \,d\xi \right)^2\,dy} 
{e^{2\gamma(z)} \left(\int_z^\infty e^{-\gamma(\xi)} \,d\xi \right)^3}=1, \\
(iv) \qquad &\lim\limits_{z\to \infty} \frac{8\int_z^\infty e^{\gamma(y)} \int_y^\infty e^{\gamma(\eta)} 
\left(\int_{\eta}^\infty e^{-\gamma(\xi)} \,d\xi\right)^2\, d\eta\, dy}
{\int_z^\infty \frac{1}{\q^3(y)} \, dy}=1. 
\end{align}
\end{corollary}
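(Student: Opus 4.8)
The plan is to derive $(i)$ from Lemma \ref{lem:1} and then bootstrap $(ii)$, $(iii)$, $(iv)$ from it by successive applications of L'H\^opital's rule, in that order. For $(i)$, apply Lemma \ref{lem:1} to $H(z)=e^{-\gamma(z)}$. This $H$ is positive, integrable by $\HH_1$, and eventually strictly decreasing since $\gamma'=2q\to\infty$ under $\HH_2$; moreover $H'=-2q\,e^{-\gamma}$ and $H''=(4q^{2}-2q')e^{-\gamma}$, so that
$$
\frac{H^{2}(z)}{H'(z)}=-\frac{e^{-\gamma(z)}}{2q(z)}\to 0,\qquad
\frac{H(z)H''(z)}{(H'(z))^{2}}=1-\frac{q'(z)}{2q^{2}(z)}\to 1 ,
$$
the last limit being exactly $\HH_2$. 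Hence Lemma \ref{lem:1} applies with $a=1\neq 2$ and gives $\int_{z}^{\infty}e^{-\gamma(y)}\,dy\sim\frac{1}{2-1}\cdot\frac{e^{-\gamma(z)}}{2q(z)}$, i.e. $2q(z)e^{\gamma(z)}\int_{z}^{\infty}e^{-\gamma(\xi)}\,d\xi\to 1$; since $\m'(z)=-2e^{\gamma(z)}\int_{z}^{\infty}e^{-\gamma(\xi)}\,d\xi$ this is the content of $(i)$.

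Write $G(z)=\int_{z}^{\infty}e^{-\gamma(\xi)}\,d\xi$. For $(ii)$, note that $(i)$ says $e^{\gamma(y)}G(y)\sim\frac{1}{2q(y)}$, and since $\m(z)=2\int_{z}^{\infty}e^{\gamma(y)}G(y)\,dy$ and $M(z)=\int_{z}^{\infty}q^{-1}(y)\,dy$ are both tails of convergent integrals (by $\HH_1$ and \eqref{convint}), the equivalence passes to the tails, giving $\m(z)\sim M(z)$; equivalently one may invoke L'H\^opital directly, since $\m'(z)/M'(z)=2q(z)e^{\gamma(z)}G(z)\to 1$. For $(iii)$, apply L'H\^opital to the quotient: its numerator $\int_{z}^{\infty}e^{\gamma(y)}G(y)^{2}\,dy$ is finite (bounded by $G(z)\,\m(z)/2$) and its denominator $e^{2\gamma(z)}G(z)^{3}=(e^{\gamma(z)}G(z))^{2}G(z)\to 0$, and differentiating, the ratio of derivatives collapses to $-1/\bigl(4q(z)e^{\gamma(z)}G(z)-3\bigr)$, which tends to $-1/(2-3)=1$ by $(i)$.

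For $(iv)$, apply L'H\^opital once more, with denominator $\int_{z}^{\infty}q^{-3}(y)\,dy\to 0$ (finite because $q^{-3}\le q^{-1}$ for large $y$, so that finiteness follows from \eqref{convint}) and numerator $8\int_{z}^{\infty}e^{\gamma(y)}\bigl(\int_{y}^{\infty}e^{\gamma(\eta)}G(\eta)^{2}\,d\eta\bigr)\,dy$, which is finite by $(iii)$. The ratio of derivatives equals $8\,q^{3}(z)\,e^{\gamma(z)}\int_{z}^{\infty}e^{\gamma(\eta)}G(\eta)^{2}\,d\eta$; inserting the equivalent $\int_{z}^{\infty}e^{\gamma(\eta)}G(\eta)^{2}\,d\eta\sim e^{2\gamma(z)}G(z)^{3}$ from $(iii)$ turns this into $\bigl(2q(z)e^{\gamma(z)}G(z)\bigr)^{3}(1+o(1))$, which tends to $1$ by $(i)$.

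The only genuinely substantive step is $(i)$: checking that $H=e^{-\gamma}$ satisfies the hypotheses of Lemma \ref{lem:1}, which is precisely where $\HH_2$ — specifically $q'/q^{2}\to 0$ — is used. Everything downstream is a chain of L'H\^opital steps, the care needed being purely bookkeeping: verifying that each denominator is eventually of one sign and tends to $0$, and that the limit fed into each step has already been established, which dictates the order $(i)\Rightarrow(ii)$, $(i)\Rightarrow(iii)$, $(i),(iii)\Rightarrow(iv)$.
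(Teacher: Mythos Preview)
Your proof is correct and follows the paper's approach for $(i)$, $(ii)$ and $(iv)$: the same application of Lemma~\ref{lem:1} with $H=e^{-\gamma}$ for $(i)$, then L'H\^opital for the rest. The one genuine difference is $(iii)$: the paper applies Lemma~\ref{lem:1} a second time with $H(z)=e^{\gamma(z)}\bigl(\int_{z}^{\infty}e^{-\gamma(\xi)}d\xi\bigr)^{2}$, which requires computing $H'$, $H''$ and checking the side conditions of the lemma, whereas you apply L'H\^opital directly to the quotient and reduce the ratio of derivatives to $-1/\bigl(4q(z)e^{\gamma(z)}G(z)-3\bigr)$. Your route is shorter and avoids the somewhat tedious verification that $HH''/(H')^{2}\to 1$ for this second choice of $H$; the paper's route has the minor advantage of recycling the same lemma. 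Either way the computation hinges on the same input, namely $(i)$, so the two arguments are close cousins.
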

\begin{proof} ($i$) Notice that $H(z)=e^{-\gamma(z)}$ is positive and strictly decreasing. Also, it satisfies 
$\lim\limits_{z\to \infty} \frac{H^2(z)}{H'(z)}=0$. 
On the other hand
$$
\frac{H''(z)H(z)}{(H'(z))^2}=\frac{(4\q^2(z)-2\q'(z))e^{-2\gamma(z)}}{4\q^2(z)e^{-2\gamma(z)}}\underset{z \to \infty}{\longrightarrow}1,
$$
because $\HH_2$. Hence, Lemma \ref{lem:1} implies that
\begin{equation}
\label{eq:3.0}
\lim\limits_{z\to \infty} \frac{\int_z^\infty e^{-\gamma(\xi)} \,d\xi}{\frac{e^{-2\gamma(z)}}{2\q(z)e^{-\gamma(z)}}}=1.
\end{equation}
($ii$) is a consequence of ($i$) by L'H\^opital's rule.

\medskip

\noindent($iii$) Consider $H(z)=e^{\gamma(z)} \left( \int_{z}^\infty e^{-\gamma(\xi)}\, d\xi\right)^2$, 
whose derivative is
$$
\begin{array}{ll}
H'(z)&=2\q(z) e^{\gamma(z)} \left( \int_{z}^\infty e^{-\gamma(\xi)}\, d\xi\right)^2-2 \int_{z}^\infty e^{-\gamma(\xi)}\, d\xi\\
\\
&=\left(2\q(z) e^{\gamma(z)} \int_{z}^\infty e^{-\gamma(\xi)}\, d\xi-2\right) \int_{z}^\infty e^{-\gamma(\xi)}.
\end{array}
$$
Therefore, $H$ is eventually strictly decreasing. On the other hand 
{\small
$$
\frac{H(z)H''(z)}{(H'(z))^2}=\frac{\left[(4\q^2(z)+2\q'(z)) e^{2\gamma(z)} \left(\int_{z}^\infty 
e^{-\gamma(\xi)}\right)^2-4\q(z) e^{\gamma(z)} \int_{z}^\infty e^{-\gamma(\xi)}\, d\xi+2\right] }
{\left(2\q(z) e^{\gamma(z)} \int_{z}^\infty e^{-\gamma(\xi)}\, d\xi-2\right)^2}
\to 1.
$$
}
Hence, we need to compute 
$$
\frac{H^2(z)}{H'(z)}=\frac{e^{2\gamma(z)}\left( \int_{z}^\infty e^{-\gamma(\xi)}\, d\xi\right)^4}
{\left(2\q(z) e^{\gamma(z)} \int_{z}^\infty e^{-\gamma(\xi)}\, d\xi-2\right)\int_{z}^\infty e^{-\gamma(\xi)}\, d\xi}
\rightsquigarrow -e^{2\gamma(z)}\left( \int_{z}^\infty e^{-\gamma(\xi)}\, d\xi\right)^3,
$$
which by $(i)$ converges to $0$ and ($iii$)  is shown.
\medskip

\noindent ($iv$) is a consequence of ($i$) and ($iii$).
\end{proof}

\begin{lemma}
\label{nouveau}
Assume $\HH_{1}$, $\lim\limits_{x\to\infty}q(x)=\infty$   and \eqref{eq:H2.5.2}. Then
\begin{enumerate}[(i)]
\item For any $y\in\RR_+$,
$$
\lim\limits_{z\to\infty}\sup_{|x-z|\le y\sqrt{\m(z)}}\big|\log\big(q(z)/q(x)\big)\big|=0.
$$
\item For any $y\in\RR$,
$$
\lim\limits_{z\to\infty}q(z)\;
\frac{\int_{z}^{z+y\,\sqrt{\m(z)}}dx/q(x)}{\sqrt{\m(z)}}=y. $$
\end{enumerate}
\end{lemma}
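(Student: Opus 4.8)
The plan is to derive both $(i)$ and $(ii)$ from a single \emph{slow variation} estimate for $M$ (equivalently for $\m$) on intervals of length $\sqrt{\m(z)}$. First let me record what may be used. By Lemma~\ref{conhypop} the hypotheses force $\HH_{2}$, so $\m(z)/M(z)\to 1$ by Corollary~\ref{cor:1}$(ii)$, and Lemma~\ref{conhypop}$(i)$ gives $q(z)\sqrt{M(z)}\to\infty$; by $\HH_{1}$ one has $\m(z)\to 0$; and since $q\to\infty$ the function $M$ is, past some point, positive, $\mathcal C^{1}$ and strictly decreasing. Write $\bar\epsilon(\zeta)=\sup_{\xi\ge\zeta}\big|q'(\xi)/q(\xi)\big|\sqrt{M(\xi)}$; by \eqref{eq:H2.5.2}, $\bar\epsilon(\zeta)\to 0$ as $\zeta\to\infty$.

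\textbf{Core step: for every $y>0$, $\ \inf_{z\le\xi\le z+y\sqrt{\m(z)}}M(\xi)/M(z)\to 1$ as $z\to\infty$.} I would argue by contradiction. If it failed there would be $c<1$, a sequence $z_{n}\to\infty$ and points $\xi_{n}\in(z_{n},z_{n}+y\sqrt{\m(z_{n})}]$ with $M(\xi_{n})\le c\,M(z_{n})$. Then $\int_{z_{n}}^{\xi_{n}}dt/q(t)=M(z_{n})-M(\xi_{n})\ge(1-c)M(z_{n})$, so by the mean value theorem for integrals (for $n$ large $1/q$ is continuous on $[z_{n},\xi_{n}]$) there is $t_{n}\in[z_{n},\xi_{n}]$ with $(\xi_{n}-z_{n})/q(t_{n})\ge(1-c)M(z_{n})$, whence $q(t_{n})\le y\sqrt{\m(z_{n})}/\big((1-c)M(z_{n})\big)$. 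Since $t_{n}\ge z_{n}$ and $M$ is decreasing, $M(t_{n})\le M(z_{n})$, and therefore
$$
q(t_{n})\sqrt{M(t_{n})}\ \le\ q(t_{n})\sqrt{M(z_{n})}\ \le\ \frac{y}{1-c}\,\sqrt{\frac{\m(z_{n})}{M(z_{n})}}\,,
$$
whose right-hand side tends to $y/(1-c)$, contradicting $q(t)\sqrt{M(t)}\to\infty$ along $t_{n}\to\infty$. In particular, for all large $z$ one has $M(\xi)\ge\tfrac12 M(z)$ throughout $[z,z+y\sqrt{\m(z)}]$.

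Granted this, $(i)$ is a short computation. On the left interval $x\in[z-y\sqrt{\m(z)},z]$ (contained in $\{q>0\}$ once $z$ is large, since $\sqrt{\m(z)}\to 0$), monotonicity of $M$ gives $M(\xi)\ge M(z)$ for $\xi\in[x,z]$, so
$$
\big|\log\!\big(q(z)/q(x)\big)\big|\ \le\ \int_{x}^{z}\frac{|q'(\xi)|}{q(\xi)}\,d\xi\ \le\ \frac{\bar\epsilon\big(z-y\sqrt{\m(z)}\big)}{\sqrt{M(z)}}\,(z-x)\ \le\ \bar\epsilon\big(z-y\sqrt{\m(z)}\big)\,y\,\sqrt{\m(z)/M(z)}\ \longrightarrow\ 0 .
$$
On the right interval $x\in[z,z+y\sqrt{\m(z)}]$ the same estimate, now with $M(\xi)\ge\tfrac12 M(z)$ from the core step, yields the bound $\sqrt2\,\bar\epsilon(z)\,y\,\sqrt{\m(z)/M(z)}\to 0$. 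Taking the supremum over $|x-z|\le y\sqrt{\m(z)}$ proves $(i)$ (the case $y=0$ is trivial). Finally $(ii)$ follows by sandwiching: fix $\delta>0$; by $(i)$, $e^{-\delta}\le q(x)/q(z)\le e^{\delta}$ on the relevant interval once $z$ is large, so for $y>0$
$$
e^{-\delta}\,\frac{y\sqrt{\m(z)}}{q(z)}\ \le\ \int_{z}^{z+y\sqrt{\m(z)}}\frac{dx}{q(x)}\ \le\ e^{\delta}\,\frac{y\sqrt{\m(z)}}{q(z)}\,;
$$
multiplying by $q(z)/\sqrt{\m(z)}$ and letting $\delta\downarrow 0$ gives the limit $y$, while $y=0$ is trivial and $y<0$ is handled identically by working on $[z-|y|\sqrt{\m(z)},z]$ and keeping track of the sign.

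The only genuine difficulty is the core step, i.e.\ bounding $M$ from below on the interval lying \emph{to the right} of $z$, where monotonicity of $M$ pushes the wrong way; the mechanism is that the interval length $\sqrt{\m(z)}\sim\sqrt{M(z)}$ is negligible next to the natural scale $q(z)M(z)$ on which $M$ varies, which is precisely the content of $q(z)\sqrt{M(z)}\to\infty$. Everything else is routine bookkeeping with the estimate $\bar\epsilon\to0$ and the asymptotic $\m\sim M$.
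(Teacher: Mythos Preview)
Your proof is correct and follows essentially the same approach as the paper: both reduce $(i)$ to the bound $|q'/q|\le \epsilon/\sqrt{M}$ together with the slow-variation statement $M(\xi)\asymp M(z)$ for $|\xi-z|\le y\sqrt{\m(z)}$, the right half-interval being the only nontrivial side, and both derive $(ii)$ from $(i)$ by sandwiching. The organizational differences are minor: the paper applies the mean value theorem to $\int_{x}^{z}q'/q$ to produce a single intermediate point $\xi$ and then controls $M(z)/M(\xi)$ by a direct estimate of the form $|M(z\pm y\sqrt{\m(z)})/M(z)-1|\le C\sup 1/(q\sqrt{M})\to 0$, whereas you integrate against the uniform envelope $\bar\epsilon$ and establish the slow variation of $M$ on the right via a contradiction argument using the integral mean value theorem. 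Both routes rest on exactly the same input, namely $q\sqrt{M}\to\infty$ from Lemma~\ref{conhypop}(i), so the arguments are essentially equivalent.
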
 
\begin{proof}We first give an upper and a lower
bound for $q(z)/q(x)$ for large $z$ and $|x-z|<y\m(z)$. Note
that if $y$ is fixed and $z$ tends to infinity, any point
$\xi\in[z-y\m(z),z+y\m(z)]$ also tends to infinity.

We have
$$
\log\big(q(z)/q(x)\big)=\int_{x}^{z}\frac{q'(u)}{q(u)}\;du\;.
$$
Therefore by the mean value Theorem there exists $\xi\in[x,z]$ (if
$x<z$, $\xi\in[z,x]$ if $x>z$) such that
$$
\log\big( q(z)/q(x)\big)=(z-x)\;\frac{q'(\xi)}{q(\xi)}
=\left(\frac{z-x}{\sqrt{M(\xi)}}\right)\;
\left(\frac{q'(\xi)}{q(\xi)}\,\sqrt{M(\xi)}\right)\;.
$$ 
By condition  \eqref{eq:H2.5.2}  the second term tends to zero when $z$ (and
hence $x$ and $\xi$) tends to infinity and we now prove that the first factor
is of bounded modulus. We have for any   $x \in [z- y\sqrt{\m(z)}, z+y\sqrt{\m(z)}]$,
$$
\left|\frac{z-x}{\sqrt{M(\xi)}}\right|\le y \sqrt{\frac{\m(z)}{M(\xi)}}\leq  
y \sqrt{\frac{\m(z)}{M(z-y\sqrt{\m(z)})}}
$$ 
since $M$ is decreasing.  Recalling that $\HH_1$ and $\HH_{2}$ hold, by Corollary \ref{cor:1} $(ii)$
it is enough to prove that
$$
\lim\limits_{z\to\infty}\frac{M(z)}{M(z-y\sqrt{\m(z)})}=1.
$$
For that purpose, we  observe that
$$\bigg\vert \frac{M(z-y\sqrt{\m(z)})}{M(z)}-1\bigg\vert=\frac{\int_{z-y\sqrt{\m(z)}}^z du/q(u)}{M(z)}\leq y\frac{\sqrt{\m(z)}}{M(z)}\sup_{ [z-y\sqrt{\m(z)},z]} \frac{1}{q}.$$
Using  again Corollary \ref{cor:1} $(ii)$ and the monotonicity of $M$, there exists $C>0$ such that
\ben
\bigg\vert \frac{M(z-y\sqrt{\m(z)})}{M(z)}-1\bigg\vert 
&\leq &  \frac{C}{\sqrt{M(z)}}\sup_{ [z-y\sqrt{\m(z)},z]} \frac{1}{q}
\leq C\sup_{ [z-y\sqrt{\m(z)},z]}  \frac{1}{\sqrt{M}q}
\een
and the last term goes to $0$ as $z\rightarrow \infty$ by Lemma \ref{conhypop}$(i)$.
We obtain $(i)$, which  immediately implies (\textit{ii}).
\end{proof}

\bi Recall notation $\sigma(z)=\sqrt{\hbox{Var}(\T_z^{(\infty)})}$,  $r_y(t)=\m^{-1}(t)+y\sqrt t$ and $r_{y,\epsilon}(t)=r_y(t)+\epsilon\sqrt t$
for  $t\in\RR_{+}$. 
\begin{lemma} \label{tec} Assume $\HH_{1}$, $\lim\limits_{x\to\infty}q(x)=\infty$,  \eqref{eq:H2.5.1}   and \eqref{eq:H2.5.2}. \\
\ben
(i) &\qquad
\lim\limits_{t\searrow 0}\, \frac{t- \m(r_{y}(t))}{\sigma(r_{y}(t))}=\sqrt{\Sigma}\, y.\\
(ii) & \qquad \frac{1}{4}\le \liminf\limits_{x\to\infty}
\frac{\Lambda(x)}{e^{\gamma(x)}/q(x)}
\le \limsup\limits_{x\to\infty}
\frac{\Lambda(x)}{e^{\gamma(x)}/q(x)}\le 1.\\
(iii) &  \qquad
\lim\limits_{t\to\infty}\frac{q\big(r_{y,\epsilon}(t)\big)}
{q\big(r_{y}(t)\big)}=1.\\
(iv) & \qquad 
\lim\limits_{t\searrow 0}\, \frac{e^{\gamma(r_{y}(t))}}{e^{\gamma(r_{y,\epsilon}(t))}}=0.
\een
\end{lemma}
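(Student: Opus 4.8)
Every assertion is an asymptotic statement near infinity, and the common thread is that the three points $\m^{-1}(t)$, $r_y(t)=\m^{-1}(t)+y\sqrt t$ and $r_{y,\epsilon}(t)=r_y(t)+\epsilon\sqrt t$ all lie, for small $t$, within a distance $O(\sqrt t)=O(\sqrt{\m(\m^{-1}(t))})$ of $z_t:=\m^{-1}(t)$. By Lemma \ref{conhypop} the standing hypotheses already force $\HH_2$ and $q(z)\sqrt{\m(z)}\to\infty$; Lemma \ref{nouveau}$(i)$ then lets us replace $q(u)$ by $q(z_t)$ up to a factor $1+o(1)$ uniformly over this window, Corollary \ref{cor:1} gives $\m\sim M$ and $-q(z)\m'(z)\to1$, and \eqref{varex} together with \eqref{eq:H2.5.1} control $\sigma(z)^{2}=\mathrm{Var}(\T_z^{(\infty)})$. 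No genuinely new estimate is needed; the content is the bookkeeping.

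For $(i)$, write $z_t=\m^{-1}(t)$, so $t=\m(z_t)$ and $\sqrt t=\sqrt{\m(z_t)}$. For the numerator,
$$
t-\m(r_y(t))=\m(z_t)-\m(z_t+y\sqrt t)=\int_{z_t}^{z_t+y\sqrt t}\bigl(-\m'(u)\bigr)\,du,
$$
and since $-\m'(u)\sim 1/q(u)$ (Corollary \ref{cor:1}$(i)$) while every point of the interval of integration tends to infinity, this equals $(1+o(1))\int_{z_t}^{z_t+y\sqrt t}du/q(u)$, which by Lemma \ref{nouveau}$(ii)$ (with $\sqrt t=\sqrt{\m(z_t)}$) is $(y+o(1))\,\sqrt t/q(z_t)$. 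For the denominator, \eqref{varex}, \eqref{eq:H2.5.1} and Corollary \ref{cor:1}$(ii)$ give $\sigma(z)^{2}\sim\int_z^\infty q^{-3}\sim\Sigma^{-1}M(z)/q(z)^{2}\sim\Sigma^{-1}\m(z)/q(z)^{2}$. Taking $z=r_y(t)$, and using $\m(r_y(t))\sim t$ (because $\m(r_y(t))-t=\m'(\xi)\,y\sqrt t=o(\sqrt t)\cdot\sqrt t=o(t)$ since $|\m'(\xi)|\sim 1/q(z_t)=o(\sqrt t)$ by Lemma \ref{conhypop}$(i)$) together with $q(r_y(t))\sim q(z_t)$ (Lemma \ref{nouveau}$(i)$), we obtain $\sigma(r_y(t))\sim\sqrt t/(\sqrt\Sigma\,q(z_t))$. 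Dividing numerator by denominator yields the limit $\sqrt\Sigma\,y$.

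For $(ii)$, since $\HH_2$ holds we have $q'/q^{2}\to0$, hence
$$
\frac{d}{dx}\left(\frac{e^{\gamma(x)}}{q(x)}\right)=e^{\gamma(x)}\left(2-\frac{q'(x)}{q(x)^{2}}\right)
$$
lies between $e^{\gamma(x)}$ and $3e^{\gamma(x)}$ for $x$ large. Integrating from a fixed large $x_0$ to $x$, dividing by $e^{\gamma(x)}/q(x)$, and letting $x\to\infty$ (both $\Lambda(x)=\Lambda(x_0)+\int_{x_0}^x e^{\gamma}$ and $e^{\gamma(x)}/q(x)$ tend to infinity) gives $\tfrac13\le\liminf_{x\to\infty}\Lambda(x)/(e^{\gamma(x)}/q(x))\le\limsup\le1$; in fact $2-q'/q^{2}\to2$ yields the limit $\tfrac12$, so the (non-sharp) bounds $\tfrac14$ and $1$ of the statement hold a fortiori.

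For $(iii)$ (understood in the regime $t\searrow0$, where $r_{y,\epsilon}(t)$ is defined), both $r_y(t)$ and $r_{y,\epsilon}(t)$ lie within $(|y|+\epsilon)\sqrt t=(|y|+\epsilon)\sqrt{\m(z_t)}$ of $z_t$, so Lemma \ref{nouveau}$(i)$ gives $q(r_y(t))\sim q(z_t)\sim q(r_{y,\epsilon}(t))$ and the ratio tends to $1$. For $(iv)$, $e^{\gamma(r_y(t))}/e^{\gamma(r_{y,\epsilon}(t))}=\exp\!\left(-2\int_{r_y(t)}^{r_{y,\epsilon}(t)}q(u)\,du\right)$; by Lemma \ref{nouveau}$(i)$ the integrand is at least $\tfrac12 q(z_t)$ for $t$ small while the interval has length $\epsilon\sqrt t=\epsilon\sqrt{\m(z_t)}$, so the exponent is at most $-\tfrac{\epsilon}{2}\,q(z_t)\sqrt{\m(z_t)}\to-\infty$ by Lemma \ref{conhypop}$(i)$, and the ratio tends to $0$. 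The only real difficulty is organizational and lives in $(i)$: one must chain the equivalents $\sigma^{2}\sim\int q^{-3}$, $\int q^{-3}\sim\Sigma^{-1}M/q^{2}$, $M\sim\m$, $\m(r_y(t))\sim t$, $q(r_y(t))\sim q(z_t)$ without losing track of which variable tends where, and justify replacing $-\m'$ by $1/q$ under the integral sign — exactly what Lemma \ref{nouveau} and Corollary \ref{cor:1} are designed to provide.
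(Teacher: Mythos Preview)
Your proof is correct and follows essentially the same route as the paper: parts $(i)$, $(iii)$, $(iv)$ rely on the same chain of equivalents via Lemma~\ref{conhypop}, Lemma~\ref{nouveau}, Corollary~\ref{cor:1} and \eqref{varex}--\eqref{eq:H2.5.1}, with only cosmetic differences (you replace the paper's l'H\^opital step for $\sigma(z+y\sqrt{\m(z)})/\sigma(z)\to1$ by the equivalent $\m(r_y(t))\sim t$, $q(r_y(t))\sim q(z_t)$). For $(ii)$ you differentiate $e^{\gamma}/q$ directly rather than integrating $\Lambda$ by parts as the paper does; your argument is slightly cleaner and in fact yields the exact limit $\tfrac12$, which implies the stated bounds.
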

\begin{proof}
From \eqref{lyapounov}  and Theorem \ref{mom} (\textit{i}), we have
$$
\m(r_{y}(t))=\EE\big(\T_{r_{y}(t)}^{(\infty)}\big)
=t-2\int_{\m^{-1}(t)}^{\m^{-1}(t)+y\,\sqrt t} e^{\gamma(u)}du\int_{u}^{\infty}e^{-\gamma(\xi)} d\xi
$$
and we have to compute
$$
\lim\limits_{t\searrow 0}\frac{\int_{\m^{-1}(t)}^{\m^{-1}(t)+y\,\sqrt t}
e^{\gamma(u)}du\int_{u}^{\infty}e^{-\gamma(\xi)} d\xi}{
\sigma(r_{y}(t))}
=\lim\limits_{z\to\infty}
\frac{\int_{z}^{z+y\,\sqrt{\m(z)}}
e^{\gamma(u)}du\int_{u}^{\infty}e^{-\gamma(\xi)} d\xi}{\sigma(z+y\,\sqrt{\m(z)}))}\;.
$$
Let us prove now that 
\be
\label{eq:rapvar}
\lim\limits_{z\to \infty} \frac{\sigma(z+y\,{\sqrt{\m(z)}})}{\sigma(z)}=1.
\ee
Using \eqref{varex} it is equivalent to prove
$$
\lim\limits_{z\to \infty} \frac{\int_{z+y\,{\tiny\sqrt{\m(z)}}}^\infty q^{-3}(x)\, dx}{\int_z^\infty q^{-3}(x)\, dx}=1.
$$
Using l'H\^opital's rule we need to study the ratio
$$
\left(1+y\frac{\m'(z)}{2\sqrt{\m(z)}}\right)\left(\frac{q(z)}{q(z+y\,{\tiny\sqrt{\m(z)}})}\right)^3.
$$
Using respectively Lemma \ref{conhypop} $(iii)$ and  Lemma \ref{nouveau} $(i)$, both factors converge to 1, when $z\to \infty$. This shows
\eqref{eq:rapvar} and therefore
$$
\lim\limits_{t\searrow 0}\frac{\int_{\m^{-1}(t)}^{\m^{-1}(t)+y\,\sqrt t}
e^{\gamma(u)}du\int_{u}^{\infty}e^{-\gamma(\xi)} d\xi}{
\sigma(r_{y}(t))}
=\lim\limits_{z\to\infty}
\frac{\int_{z}^{z+y\,\sqrt{\m(z)}}
e^{\gamma(u)}du\int_{u}^{\infty}e^{-\gamma(\xi)} d\xi}{
\sigma(z)}\;.
$$
Using again Corollary \ref{cor:1} (\textit{i}) and relation 1.5
this last limit is equal to
$$
\begin{array}{l}
\lim\limits_{z\to\infty}\frac{\int_{z}^{z+y\,\sqrt{\m(z)}}q(x)^{-1}\;dx}
{2\,\sqrt{\int_{z}^{\infty}q(x)^{-3}\:dx}}
=\lim\limits_{z\to\infty}\frac{q(z)\;
\int_{z}^{z+y\,\sqrt{\m(z)}}q(x)^{-1}\;dx}{2\,\sqrt{\m(z)}}\;
\frac{\sqrt{\m(z)}}{q(z)\;\sqrt{\int_{z}^{\infty}q(x)^{-3}\:dx}}
=\frac{\sqrt{\Sigma} \, y}{2}\;.
\end{array}
$$
The first factor in the above expression, converges to $y$ by  Lemma \ref{nouveau} (\textit{ii}). On the other
hand, the second factor converges to  $\sqrt{\Sigma}$  by hypothesis \eqref{eq:H2.5.1} and the fact that 
$\m(z)/M(z) \to 1$ (Corollary \ref{cor:1} (\textit{ii})).  \\

Let us now prove $(ii)$.
From $\HH_{1}, \HH_{2}$, we can choose $A\in (0,\infty)$, such that
$$
\inf_{x\ge A}q(x)>1\;, \qquad \sup_{x\ge A}\left|\frac{q'(x)}{q^{2}(x)}\right|<\frac{1}{2} \;.
$$
Using integration by parts, we have
$$
\Lambda(x)-\Lambda(A)=\int_{A}^{x}\frac{1}{2\,q(\xi)}\; 2\,q(\xi)\,e^{\gamma(\xi)}d\xi
=\frac{e^{\gamma(x)}}{2\,q(x)}-\frac{e^{\gamma(A)}}{2\,q(A)}
+\int_{A}^{x}\frac{q'(\xi)}{q(\xi)^{2}}\;e^{\gamma(\xi)}d\xi\;.
$$
Therefore from our choice of $A$ we have for $x>A$
$$
\frac{1}{2}\;\left(\frac{e^{\gamma(x)}}{2\,q(x)}-
\frac{e^{\gamma(A)}}{2\,q(A)}\right)
\le \Lambda(x)-\Lambda(A)\le 2\;\left(\frac{e^{\gamma(x)}}{2\,q(x)}-
\frac{e^{\gamma(A)}}{2\,q(A)}\right)\;.
$$
Since $\Lambda(x)$ diverges with $x$ (by $\HH_{1}$), we obtain $(ii)$. \\

Using Lemma \ref{nouveau} (\textit{i}) and replacing $y$ by
$y+\epsilon$ and taking $z=\m^{-1}(t)$, we get $(iii)$.  \\
 
We have from the definition of the function $\gamma$ 
$$
\log\left(\frac{e^{\gamma(r_{y}(t))}}{e^{\gamma(r_{y,\epsilon}(t))}}\right)=
-2\,\int_{r_{y}(t)}^{r_{y,\epsilon}(t)}q(\xi)\;d\xi\;.
$$
Using again Lemma \ref{conhypop} (\textit{iii}), for small $t$, this quantity is
equivalent to
$$
-2\,\epsilon\,\sqrt{t}\;q\big(r_{y}(t)\big)=-2\,\epsilon\,\sqrt{\m(z)}\;q\big(z+y\,\sqrt{\m(z)}\big),
$$ 
by setting $t=\m(z)$. Finally,
$$
\begin{array}{l}
\sqrt{\m(z)}\;q\big(z+y\,\sqrt{\m(z)}\big)
=\frac{\sqrt{\m(z)}}{\sqrt{\int_{z}^{\infty}d\xi/\q(\xi)}}\;
\frac{q\big(z+y\,\sqrt{\m(z)}\big)}{q(z)}\;q(z)\,
\sqrt{\int_{z}^{\infty}d\xi/\q(\xi)}\;,
\end{array}
$$   
 tends to $\infty$, when $z\to \infty$, for any $\epsilon>0$ 
by Corollary  \ref{cor:1} (\textit{ii}), 
Lemma \ref{conhypop} (\textit{iii}) and Lemma \ref{conhypop} (\textit{i}).  It proves $(iv)$.
\end{proof}

\begin{lemma} \label{lip}
Assume $\HH_{1}$, $\lim\limits_{x\to\infty} q(x)=\infty$ and 
$\limsup\limits_{x\to\infty} \frac{|q'(x)|}{q(x)}<\infty.$
Then there exists a constant $\mathfrak{G}>0$ such that
$$
\sup_{x\in[0,\infty)}\frac{\big|\m''(x)\big|}{\m'(x)}\le \mathfrak{G}\;.
$$
\end{lemma}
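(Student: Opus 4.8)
The plan is to reduce the statement to a sharp estimate of $\m''/\m'$ for large argument, obtained by integration by parts, and to dispose of the compact interval $[0,A]$ by a continuity argument. First I would record the exact formulas. Since $\LL\m=1$, that is $\tfrac12\m''-\q\m'=1$, one has $\m''(z)=2+2\,\q(z)\,\m'(z)$; moreover, differentiating \eqref{lyapounov},
$$
\m'(z)=-2\,e^{\gamma(z)}\int_z^\infty e^{-\gamma(\xi)}\,d\xi<0\qquad\text{for every }z\ge 0 .
$$
Writing $I(z)=e^{\gamma(z)}\int_z^\infty e^{-\gamma(\xi)}\,d\xi=-\tfrac12\,\m'(z)>0$, this gives the identity
$$
\frac{\m''(z)}{\m'(z)}=2\,\q(z)+\frac{2}{\m'(z)}=\frac{2\,\q(z)\,I(z)-1}{I(z)} ,
$$
so that it suffices to bound $|2\,\q(z)\,I(z)-1|/I(z)$ for $z$ large (equivalently, $(\log|\m'|)'$).

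Next I would integrate by parts twice, using $2\,\q\,e^{-\gamma}=-(e^{-\gamma})'$ together with the hypotheses $\q\to\infty$ (so $\gamma\to\infty$ and $e^{-\gamma}/\q\to0$) and $\limsup_{x\to\infty}|\q'(x)|/\q(x)<\infty$ (so also $|\q'|/\q^{2}\to0$). The first integration by parts yields
$$
\int_z^\infty e^{-\gamma(\xi)}\,d\xi=\frac{e^{-\gamma(z)}}{2\,\q(z)}-\int_z^\infty e^{-\gamma(\xi)}\,\frac{\q'(\xi)}{2\,\q(\xi)^{2}}\,d\xi ,
$$
whence $2\,\q(z)\,I(z)-1=-\,\q(z)\,e^{\gamma(z)}\int_z^\infty e^{-\gamma(\xi)}\,\q'(\xi)/\q(\xi)^{2}\,d\xi$; the second gives
$$
\int_z^\infty \frac{e^{-\gamma(\xi)}}{\q(\xi)}\,d\xi=\frac{e^{-\gamma(z)}}{2\,\q(z)^{2}}-\int_z^\infty e^{-\gamma(\xi)}\,\frac{\q'(\xi)}{\q(\xi)^{3}}\,d\xi .
$$
Now I fix $A$ so large that $|\q'|/\q^{2}\le\tfrac12$ and $|\q'|/\q\le C_{0}<\infty$ on $[A,\infty)$. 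In the last identity the error term is at most $\tfrac12\int_z^\infty e^{-\gamma}/\q$, so it can be absorbed, giving $\int_z^\infty e^{-\gamma(\xi)}/\q(\xi)\,d\xi\le e^{-\gamma(z)}/\q(z)^{2}$ for $z\ge A$; absorbing the error term in the first identity in the same way gives $I(z)\ge 2/(5\,\q(z))$ for $z\ge A$. Combining these with $|\q'/\q^{2}|\le C_{0}/\q$, for $z\ge A$,
$$
\bigl|2\,\q(z)\,I(z)-1\bigr|\le \q(z)\,e^{\gamma(z)}\,C_{0}\int_z^\infty\frac{e^{-\gamma(\xi)}}{\q(\xi)}\,d\xi\le\frac{C_{0}}{\q(z)} ,
$$
and therefore $|\m''(z)/\m'(z)|\le \tfrac{5C_{0}}{2}$ on $[A,\infty)$.

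Finally, on the compact interval $[0,A]$ the function $x\mapsto\m''(x)/\m'(x)$ is continuous --- indeed $\m\in\mathcal C^{2}([0,\infty))$ because $\q\in\mathcal C^{1}$, and $\m''(x)=2+2\,\q(x)\,\m'(x)$ --- while $\m'$ is strictly negative, hence bounded away from $0$ on $[0,A]$; so $|\m''/\m'|$ is bounded there by some $G_{0}$, and one takes $\mathfrak{G}=\max\{G_{0},\tfrac{5C_{0}}{2}\}$. The one delicate point is the second integration by parts: the crude bound using only $|\q'|/\q^{2}\le\varepsilon$ gives merely $|2\,\q I-1|\le\varepsilon\,\q I$, which after division by $I$ still blows up like $\varepsilon\,\q(z)$; what saves the day is the extra factor $1/\q(z)$ coming from the sharp estimate $\int_z^\infty e^{-\gamma}/\q\le e^{-\gamma(z)}/\q(z)^{2}$ (obtained by a self-improving absorption, which uses that $|\q'|/\q^{2}$ is \emph{small}, not just bounded), together with the boundedness $|\q'|/\q\le C_{0}$, which is what lets one pull the fixed constant $C_{0}$ --- rather than an extra $1/\q$ --- out of the integral representing $2\,\q I-1$.
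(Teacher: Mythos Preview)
Your proof is correct and follows essentially the same route as the paper's: both arguments express $\tfrac12\m''=q\,e^{\gamma}\int_\cdot^\infty e^{-\gamma}q'/q^{2}$ via one integration by parts, pull out the bound $|q'|/q\le C_{0}$, and then control $e^{\gamma}\int_\cdot^\infty e^{-\gamma}/q$ by a second integration by parts yielding the asymptotic $\sim 1/(2q^{2})$; the compact interval is handled by continuity in both. The only cosmetic difference is that the paper invokes Corollary~\ref{cor:1}(i) for the lower bound $q|\m'|\to 1$, whereas you derive the equivalent inequality $I(z)\ge 2/(5q(z))$ directly by the same absorption trick.
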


\begin{proof}
From $\LL\m=1$ we derive
$\frac{1}{2}\,\m''=q\,\m'+1$. 
Moreover 
\ben
\m'(x)&=&-2\,e^{\gamma(x)} \int_x^\infty e^{-\gamma(\xi)} d\xi=
 e^{\gamma(x)}\,\int_x^\infty
 \frac{1}{q(\xi)}\;d\left(e^{-\gamma(\xi)}\right) \\
&=&-\frac{1}{q(x)}+e^{\gamma(x)} \int_x^\infty
\frac{q'(\xi)}{q(\xi)^{2}}\;
e^{-\gamma(\xi)} d\xi\;.
\een
by integration by parts. Therefore
$$
\frac{1}{2}\,\m''(x)= q(x)\;e^{\gamma(x)} \int_x^\infty
\frac{q'(\xi)}{q(\xi)^{2}}e^{-\gamma(\xi)} d\xi$$
and writing $g(x)=e^{\gamma(x)} \int_x^\infty
\frac{e^{-\gamma(\xi)}}{q(\xi)}\;
 d\xi$, there exists 
 $\mathfrak{A}>0$ such that
 $$\big|\m''(x)\big|\leq  \mathfrak{A}\;q(x)\; g(x).$$
 Moreover, by integration by parts,
$$
g(x)=\frac{1}{2\;q^2(x)}-\;e^{\gamma(x)} 
\int_x^\infty\frac{q'(\xi)}{q(\xi)^{3}}\;e^{-\gamma(\xi)} d\xi=
\frac{1}{2\;q^2(x)}-g(x)o(1)$$
as $x\rightarrow \infty$, using that $|q'(x)|/q(x)^2=|q'(x)|/q(x).1/q(x) \rightarrow 0$.
Then $g(x)\sim 1/(2q^2(x))$ and  we get
$$\limsup\limits_{x\to\infty}\frac{|\m''(x)\big|}{\m'(x)}\leq \frac{\mathfrak{A}}{2}\limsup\limits_{x\to\infty} \frac{1}{q(x)\m'(x)}<\infty$$
by Corollary \ref{cor:1}(\textit{i}). The Lemma
follows by observing that in any compact set $|\m''|$ is bounded and
$\m'$ does not vanish.  
\end{proof}

\section{Central moments estimations.}
\label{moments}

In this section we study  the asymptotic behavior for the
moments of the random variable $\T_z$, under 
$\PP_\infty$, assuming that $\HH_1, \HH_2$ holds.
We will derive results stronger that those of Corollary \ref{cor:Pierre}
$(ii)$. 
\smallskip

The following limits exist, for all $n\ge1$ and all $z\ge 0$, 
$$
\EE_\infty(\T_z^n)=\lim\limits_{x\to \infty} \EE_x(\T_z^n)=
2n\int_z^\infty e^{\gamma(y)} \int_y \EE_\xi(\T_z^{n-1}) e^{-\gamma(\xi)} \,d\xi\,dy.
$$
The main difficulty to get the asymptotic behavior, on $z$, for these
moments is that  the dependence
on $z$ in both the limits of integration and the integrand. To
simplify the dependence on the integrand we shall 
prove the following equality.

\begin{proposition}
\label{recurrence}
 For all $n\ge 1$ and all $z<\xi$, we have 
%\begin{mdframed}
\begin{equation}
\label{for:1}
\EE_\xi(\T_z^n)=\sum\limits_{k=0}^n (-1)^k \sum\limits_{\vec\ell \in \Delta_k^n}
\left[\prod\limits_{i=1}^{k} {\ell_{i-1}\choose \ell_i} \EE_{\infty}(\T_\xi^{\ell_{i-1}-\ell_i}) \right]
(\EE_{\infty}(\T_z^{\ell_k})-\EE_{\infty}(\T_\xi^{\ell_k})),
\end{equation}
%\end{mdframed}
where $\Delta_k^n=\{ \vec\ell =(\ell_0,\ell_1,\cdots,\ell_k) \in \NN^{k+1}:\, 0\le \ell_k<\ell_{k-1} < \cdots < \ell_1<n=\ell_0\}$  ( an empty
product is taken to be 1).
\end{proposition}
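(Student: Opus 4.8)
The starting point is the strong Markov property of $X^\infty$ at the stopping time $\T_\xi^{(\infty)}$. Since $z<\xi$, the continuous path of $X^\infty$ must pass through $\xi$ before hitting $z$, so $\T_z^{(\infty)}=\T_\xi^{(\infty)}+\T'$, where $\T'$ is the hitting time of $z$ for the process restarted at $\xi$; by the strong Markov property $\T'$ is independent of $\T_\xi^{(\infty)}$ and has the law of $\T_z$ under $\PP_\xi$, and all moments below are finite by Theorem \ref{mom} (ii). Expanding $(\T_\xi^{(\infty)}+\T')^n$ by the binomial theorem and taking expectations gives
\[
\EE_\infty(\T_z^n)=\sum_{j=0}^{n}\binom{n}{j}\,\EE_\infty(\T_\xi^{n-j})\,\EE_\xi(\T_z^{j}),
\]
and isolating the terms $j=n$ and $j=0$ rearranges this into the basic recursion
\[
\EE_\xi(\T_z^n)=\bigl(\EE_\infty(\T_z^n)-\EE_\infty(\T_\xi^n)\bigr)-\sum_{j=1}^{n-1}\binom{n}{j}\,\EE_\infty(\T_\xi^{n-j})\,\EE_\xi(\T_z^{j}).\qquad(\ast)
\]

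I would then prove \eqref{for:1} by induction on $n$. For $n=1$ the sum in $(\ast)$ is empty, $(\ast)$ reads $\EE_\xi(\T_z)=\EE_\infty(\T_z)-\EE_\infty(\T_\xi)$, which is precisely the $k=0$ term of \eqref{for:1}, while the $k=1$ term (index set $\Delta_1^1=\{(1,0)\}$) vanishes because its boundary factor is $\EE_\infty(\T_z^{0})-\EE_\infty(\T_\xi^{0})=0$. For the inductive step I would substitute \eqref{for:1} for each $\EE_\xi(\T_z^{j})$, $1\le j\le n-1$, into $(\ast)$ and re-index. A generic summand produced this way is indexed by a pair $(j,\vec{\ell}\,')$ with $1\le j\le n-1$ and $\vec{\ell}\,'=(\ell'_0,\dots,\ell'_{k'})\in\Delta_{k'}^{j}$, $\ell'_0=j$; prepending $n$ yields $\vec{\ell}=(n,j,\ell'_1,\dots,\ell'_{k'})\in\Delta_{k'+1}^{n}$, the product of binomials combines (with $\ell_0=n$, $\ell_1=j$) as $\binom{n}{j}\prod_{i=1}^{k'}\binom{\ell'_{i-1}}{\ell'_i}=\prod_{i=1}^{k'+1}\binom{\ell_{i-1}}{\ell_i}$, the extra factor $\EE_\infty(\T_\xi^{n-j})$ becomes the $i=1$ factor $\EE_\infty(\T_\xi^{\ell_0-\ell_1})$, the sign acquires one more $-1$ (that is, $(-1)^{k'}\mapsto(-1)^{k'+1}$), and the boundary factor $\EE_\infty(\T_z^{\ell'_{k'}})-\EE_\infty(\T_\xi^{\ell'_{k'}})$ is unchanged. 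The map $(j,\vec{\ell}\,')\mapsto\vec{\ell}$ is a bijection onto $\{\vec{\ell}\in\Delta_k^n:\ k\ge 1,\ \ell_1\ge 1\}$; the only tuple of $\bigcup_{k\ge 1}\Delta_k^n$ outside this range is $(n,0)\in\Delta_1^n$, whose contribution to \eqref{for:1} vanishes since its boundary factor is $0$. Hence the double sum coming from $(\ast)$ reproduces exactly the $k\ge 1$ terms of \eqref{for:1}, while the first bracket $\EE_\infty(\T_z^n)-\EE_\infty(\T_\xi^n)$ of $(\ast)$ is the $k=0$ term, completing the induction.

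Conceptually, the identity is just coefficient extraction in a product of moment generating functions. Setting $A(\lambda)=\EE_\xi(e^{\lambda\T_z})$, $B(\lambda)=\EE_\infty(e^{\lambda\T_\xi})$ and $C(\lambda)=\EE_\infty(e^{\lambda\T_z})$, all analytic for $|\lambda|<1/\m(0)$ by Theorem \ref{mom} (ii), the Markov decomposition above reads $C=AB$, hence on the neighbourhood of $0$ where $|B-1|<1$ one has $A-1=(C-B)/B=(C-B)\sum_{k\ge 0}(-1)^k(B-1)^k$; comparing the coefficients of $\lambda^n/n!$ gives \eqref{for:1} directly, each of the $k$ factors $B-1$ supplying an increment $\ell_{i-1}-\ell_i\ge 1$ and the factor $C-B$ the tail index $\ell_k$, while the multinomial coefficient of the Cauchy product is exactly $\prod_{i=1}^{k}\binom{\ell_{i-1}}{\ell_i}$. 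I expect the only genuine work to be this combinatorial bookkeeping — verifying that the nested index sets $\Delta_k^n$ fit together under the re-indexing, i.e.\ that $(j,\vec{\ell}\,')\mapsto\vec{\ell}$ is the claimed bijection, and keeping track of the harmless tuples whose boundary factor $\EE_\infty(\T_z^{0})-\EE_\infty(\T_\xi^{0})$ equals $0$; there is no analytic subtlety.
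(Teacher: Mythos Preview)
Your proof is correct and follows essentially the same approach as the paper: both derive the recursion $(\ast)$ from the strong Markov property and then solve it, the paper saying only ``this recurrence relation can be solved by considering the formal power series $\sum_{n\ge 1}\EE_\xi(\T_z^n)u^n/n!$'' where you carry out both the explicit induction and the generating-function reading. The one cosmetic difference is that the paper first works from a finite starting point $x>\xi$ (using $\EE_\xi(\T_z^n)=\EE_x((\T_z-\T_\xi)^n)$ and a binomial computation) and then lets $x\to\infty$, whereas you apply the Markov decomposition directly to $X^\infty$; both routes yield exactly the same $(\ast)$.
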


\begin{proof} Let us consider $z<\xi<x$.
During  the proof we will denote for convenience (when it is necessary)  by $\T_{x\to z}$ the  time for the process to go from $x$ to $z$ for any $x>z$.

The strong Markov property ensures that  $\EE_\xi((\T_z)^n)=\EE_x((\T_z-\T_\xi)^n)$ and therefore
$$
\begin{array}{l}
\EE_\xi(\T_z^n)-\EE_x(\T_z^n)=\sum\limits_{j=0}^{n-1} (-1)^{n-j} {n\choose j} \EE_x(\T_\xi^{n-j}\T_z^j )\\
=\sum\limits_{j=0}^{n-1} (-1)^{n-j} {n\choose j} \EE_x\left(\T_\xi^{n-j}(\T_{x\to \xi}+\T_{\xi\to z})^j\right)\\
=\sum\limits_{j=0}^{n-1} (-1)^{n-j} {n\choose j} \sum\limits_{r=0}^j {j\choose r} \EE_x\left(\T_\xi^{n-(j-r)}\EE_\xi(\T_z^{(j-r)})\right)\\
=\sum\limits_{j=0}^{n-1} (-1)^{n-j} {n\choose j} \sum\limits_{r=0}^j {j\choose r} \EE_x(\T_\xi^{n-r})\EE_\xi(\T_z^r)
=\sum\limits_{r=0}^{n-1} \EE_x(\T_\xi^{n-r})\EE_\xi(\T_z^r) \sum\limits_{j=r}^{n-1} (-1)^{n-j} {n\choose j} {j\choose r}\\ 
=\sum\limits_{r=0}^{n-1} {n\choose r} \EE_x(\T_\xi^{n-r})\EE_\xi(\T_z^r) \sum\limits_{j'=0}^{n-r-1} (-1)^{n-r-j'} {n-r\choose j'} 
=-\sum\limits_{r=0}^{n-1} {n\choose r} \EE_x(\T_\xi^{n-r})\EE_\xi(\T_z^r).
\end{array}
$$
Therefore, we have proven that
$$\EE_\xi(\T_z^n)=\Big(\EE_x(\T_z^n)-\EE_x(\T_\xi^n)\Big)
-\sum\limits_{r=1}^{n-1} {n\choose r} \EE_x(\T_\xi^{n-r})\EE_\xi(\T_z^r).
$$
Taking  the limit as $x\to \infty$ yields a similar recurrence formula with $\EE_\infty$ instead $\EE_x$.
This recurrence relation can be solved by considering the formal power series $\sum_{n\ge 1} {\EE_\xi(\T_z^n)\over n!} u^n$  (in the variable $u$).  
\end{proof}

\bi
We are now interested in the  moments of the normalized ratio under $\PP_\infty$
$$
\frac{\T_z-\EE_\infty(\T_z)}{\sqrt{\hbox{Var}_\infty(\T_z)}},
$$
where $\hbox{Var}_\infty(\T_z)=\EE_\infty(\T_z^2)-(\EE_\infty(\T_z))^2$.
We start by estimating this quantity
$$
\begin{array}{lll}
\hbox{Var}_\infty(\T_z)&\hspace{-0.2cm}=&\hspace{-0.2cm}4\int_z^\infty e^{\gamma(y)} \int_y^\infty \EE_\xi(\T_z) e^{-\gamma(\xi)} \,d\xi\, dy-(\EE_\infty(\T_z))^2\\
&\hspace{-0.2cm}=&\hspace{-0.2cm}(\EE_\infty(\T_z))^2-4\int_z^\infty e^{\gamma(y)} \int_y^\infty \EE_\infty(\T_\xi) e^{-\gamma(\xi)} \,d\xi\, dy.
\end{array}
$$ 
In particular, we have
$$
(e^{-\gamma(z)} \hbox{Var}_\infty(\T_z)')'=
8 e^{\gamma(z)}\left( \int_z^\infty e^{-\gamma(\xi)} \,d\xi \right)^2.
$$
Integrating this equality yields
$$
 \hbox{Var}_\infty(\T_z)'=8 e^{\gamma(z)} \int_{z}^\infty
 e^{\gamma(\xi)}\left( \int_y^\infty e^{-\gamma(\eta)} \,d\eta
 \right)^2 d\xi,
 $$
and finally
\begin{equation}
\label{eq:2}
\hbox{Var}_\infty(\T_z)=8\,\Psi(z)
\end{equation}
with
$$
\Psi(z)=\int_z^\infty e^{\gamma(y)} \int_y^\infty e^{\gamma(\xi)} 
\left(\int_{\xi}^\infty e^{-\gamma(\eta)} \,d\eta\right)^2\, d\xi\, dy\;.
$$

\me The formula \eqref{varex} follows from \eqref{eq:2} and from Corollary \ref{cor:1} $(iv)$.

\bi

\begin{lemma}[Fourth moment estimation]\label{eq:fourth}
$$
\lim\limits_{z \to \infty}
\frac{\EE_\infty\left([\T_z-\EE_\infty(\T_z)]^4\right)}{(\emph{Var}_\infty(\T_z))^2}=3.  
$$
\end{lemma}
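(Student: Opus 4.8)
The plan is to prove the equivalent statement that
\[
\kappa(z):=\EE_\infty\big([\T_z-\m(z)]^4\big)-3\,V(z)^2=o\big(V(z)^2\big)
\]
as $z\to\infty$, where $V(z):=\mathrm{Var}_\infty(\T_z)=8\,\Psi(z)$. Throughout I write $\mathcal G$ for the linear operator $(\mathcal Gh)(z)=2\int_z^\infty e^{\gamma(y)}\int_y^\infty h(\xi)e^{-\gamma(\xi)}\,d\xi\,dy$, characterised by $\LL(\mathcal Gh)=h$ and $(\mathcal Gh)(\infty)=0$ (and then being the unique bounded such solution), and I set $\phi_0=\m$, $\phi_k=\mathcal G^k\m$. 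First I would compute $\EE_\infty(\T_z^n)$ for $n\le 4$: letting $x\to\infty$ in \eqref{for:0} gives $\EE_\infty(\T_z^n)=n\,\mathcal G\big(\xi\mapsto\EE_\xi(\T_z^{n-1})\big)(z)$, while the strong Markov property yields the decomposition $\T_z^{(\infty)}\overset{d}{=}\T_\xi^{(\infty)}+\widetilde\T$ with $\widetilde\T\overset{d}{=}\T_z^{(\xi)}$ independent of $\T_\xi^{(\infty)}$ for $z<\xi$, so that $\EE_\xi(\T_z^{n-1})$ is a polynomial in the quantities $\m(z),\phi_k(z)$ and $\m(\xi),\phi_k(\xi)$. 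Feeding this back one gets $\EE_\infty(\T_z^2)=2\m^2-2\phi_1$ (hence $V=\m^2-2\phi_1=\mathcal G((\m')^2)$, consistent with \eqref{eq:2}), $\EE_\infty(\T_z^3)=6\m^3-12\m\phi_1+6\phi_2$, $\EE_\infty(\T_z^4)=24\m^4-72\m^2\phi_1+48\m\phi_2+24\phi_1^2-24\phi_3$, and therefore
\[
\kappa=6\m^4-24\m^2\phi_1+24\m\phi_2+12\phi_1^2-24\phi_3 .
\]
(The same follows from the fact that $g(z,\lambda):=1/\EE_\infty(e^{\lambda\T_z})$ solves $\LL g=-\lambda g$, so $\partial_\lambda^n g|_{\lambda=0}=(-1)^n n!\,\phi_{n-1}$, and $\kappa(z)$ is the fourth cumulant of $\T_z^{(\infty)}$.)

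Next I would collapse this using a few elementary identities for $\LL$. From $\LL\m=1$ one gets $\LL(\m^{k+1})=(k+1)\m^k+\binom{k+1}{2}\m^{k-1}(\m')^2$, hence $\mathcal G(\m^k)=\frac{\m^{k+1}}{k+1}-\frac{k}{2}\,\mathcal G\big(\m^{k-1}(\m')^2\big)$; from the Leibniz identity $\LL(uv)=u\LL v+v\LL u+u'v'$ together with $\LL V=(\m')^2$ one gets $\mathcal G(V)=\m V-\mathcal G(\m(\m')^2)-\mathcal G(\m'V')$, an analogous expression for $\mathcal G^2(V)$, $V^2=2\mathcal G(V(\m')^2)+\mathcal G((V')^2)$, and $\m\,\mathcal G(\m'V')=\mathcal G(\m\m'V')+\mathcal G^2(\m'V')+\mathcal G\big(\m'\,(\mathcal G(\m'V'))'\big)$. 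Expressing $\phi_1,\phi_2,\phi_3$ through $\m$ and $\mathcal G$ applied to $(\m')^2$-weighted functions, the $\m^4$ terms cancel; substituting the expressions for $\mathcal G(V)$ and $\mathcal G^2(V)$ then cancels the $\m^2V$, $\m\,\mathcal G(\m(\m')^2)$, $\mathcal G(\m^2(\m')^2)$ and $\mathcal G^2(\m(\m')^2)$ terms; replacing $3V^2$ cancels the remaining $\mathcal G((\m')^2V)$; and the last identity collapses what is left to
\[
\kappa(z)=3\,\mathcal G\big((V')^2\big)(z)+12\,\mathcal G\big(\m'\cdot(\mathcal G(\m'V'))'\big)(z).
\]
This telescoping of cancellations is the delicate part, and I would double-check it against the explicit case $q(x)=x^a$ ($a>1$), where every quantity is asymptotically a power of $z$ and one verifies that the leading, $V^2$-order, contributions indeed cancel.

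Finally I would estimate the two surviving terms. Both are nonnegative: $(V')^2\ge0$, and $\m'\le0$, $\m'V'\ge0$, so $\mathcal G(\m'V')\ge0$ is decreasing, $(\mathcal G(\m'V'))'\le0$, whence $\m'(\mathcal G(\m'V'))'\ge0$; in particular $\kappa\ge0$ and it suffices to bound it from above. By Corollary~\ref{cor:1}$(i)$, $|\m'(z)|=2e^{\gamma(z)}\int_z^\infty e^{-\gamma}\sim 1/q(z)$, and then $|V'(z)|=2e^{\gamma(z)}\int_z^\infty(\m')^2e^{-\gamma}=O(q(z)^{-3})$. A l'H\^opital argument of the type used for Corollary~\ref{moment2} (using $\HH_2$) shows that for $h$ equal to a slowly varying factor times a power of $q$ one has $e^{\gamma(z)}\int_z^\infty h\,e^{-\gamma}=O\big(h(z)/q(z)\big)$, hence $(\mathcal Gh)(z)=O\big(\int_z^\infty h/q\big)$. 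Applying this, $(V')^2=O(q^{-6})$ gives $\mathcal G((V')^2)=O(\int_z^\infty q^{-7})$; likewise $(\mathcal G(\m'V'))'(z)=O(q(z)^{-5})$, so $\m'(\mathcal G(\m'V'))'=O(q^{-6})$ and $\mathcal G(\m'(\mathcal G(\m'V'))')=O(\int_z^\infty q^{-7})$. Thus $0\le\kappa(z)=O\big(\int_z^\infty q(y)^{-7}\,dy\big)$. On the other hand $V(z)\sim\int_z^\infty q^{-3}$ by \eqref{varex} and Corollary~\ref{cor:1}$(iv)$, and under $\HH_2$ l'H\^opital gives $\int_z^\infty q^{-3}\gg q(z)^{-4}$, so $q(z)^4\int_z^\infty q^{-3}\to\infty$ and therefore $\int_z^\infty q^{-7}=o\big((\int_z^\infty q^{-3})^2\big)=o(V(z)^2)$. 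Hence $\kappa(z)/V(z)^2\to0$, i.e.\ $\EE_\infty([\T_z-\m(z)]^4)/V(z)^2\to 3$. The main obstacle is the exact bookkeeping of the cancellations in the second step; the asymptotic inputs of the third step are all instances of the l'H\^opital (``$\rightsquigarrow$'') machinery already developed in Appendix~\ref{App:A}.
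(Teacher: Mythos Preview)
Your argument is correct and takes a genuinely different route from the paper's. The paper subtracts $6\,\mathrm{Var}^2$ from the fourth central moment, then repeatedly differentiates the remainder (computing $Q'$, $(e^{-\gamma}Q')'$, a further quotient $Z$, and $Z'$), and in parallel performs the same tower of differentiations on $64\Psi^{2}$; only at the very end do the two towers get compared via a chain of ``$\rightsquigarrow$'' steps that collapses to $3+9\,q'/(4q^{2})\to 3$. You instead isolate the fourth \emph{cumulant} $\kappa=\EE_\infty[(\T_z-\m)^4]-3V^{2}$ from the outset and, by a sequence of Leibniz-type identities for $\LL$ and the operator $\mathcal G$, reduce it to the closed form $\kappa=3\,\mathcal G\big((V')^{2}\big)+12\,\mathcal G\big(\m'\,(\mathcal G(\m'V'))'\big)$. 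I checked the algebra and it is correct. This buys you two things the paper does not have: manifest nonnegativity $\kappa\ge 0$ (hence the kurtosis is always $\ge 3$), and a structurally transparent object to estimate. The price is the bookkeeping of cancellations, which you flag yourself; the paper avoids that bookkeeping at the cost of a longer and less illuminating chain of l'H\^opital reductions.

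One point to tighten: the final step, where you conclude $\int_z^{\infty}q^{-7}=o\big((\int_z^{\infty}q^{-3})^{2}\big)$, rests on ``l'H\^opital gives $\int_z^{\infty}q^{-3}\gg q(z)^{-4}$''. As written this appeals to l'H\^opital with denominator $q^{-4}$, which requires $q$ to be eventually monotone---not guaranteed by $\HH_2$ alone. The conclusion is nonetheless true: from $|q'|/q^{2}<\epsilon$ for large $z$ one gets that $1/q$ is $\epsilon$-Lipschitz, so $q(y)\le 2q(z)$ for $y\in[z,z+1/(2\epsilon q(z))]$, hence $\int_z^{\infty}q^{-3}\ge (16\epsilon)^{-1}q(z)^{-4}$; since $\epsilon>0$ is arbitrary, $q(z)^{4}\int_z^{\infty}q^{-3}\to\infty$. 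The same remark applies to your heuristic ``$e^{\gamma}\int h\,e^{-\gamma}=O(h/q)$ for $h$ a power of $q$'': this is exactly the Corollary~\ref{cor:1}-type Laplace estimate, but each instance (for $h=(\m')^{2}$, $(V')^{2}$, $\m'V'$, \ldots) should be justified by one l'H\^opital step as in Corollary~\ref{cor:1}(i), not assumed as a blanket rule.
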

\begin{proof}
We write  $\T=\T_z^{(\infty)}$, $S=\T_\xi^{(\infty)}$, $\m=\EE(\T)$ 
$\hbox{Var}=\hbox{Var}(\T)$.  We also introduce the notation
$$
e^\gamma=e^{\gamma(z)}, \quad \int e^{-\gamma}=\int_z^\infty e^{-\gamma(\xi)}d\xi,  \quad e^\gamma\!\!\int \!\!e^{-\gamma}=e^{\gamma(z)} \int_z^\infty e^{-\gamma(\xi)}d\xi,$$
and
$$
 \iint [\bullet]=\int_z^\infty e^{\gamma(y)} \int_y^\infty \bullet e^{-\gamma(\xi)} \,d\xi\, dy, 
 \quad \int [\bullet]=\int_z^\infty \bullet \, e^{-\gamma(\xi)} \,d\xi. 
 $$
We start by noticing that from (\ref{for:0}) and (\ref{for:1})
$$
\begin{array}{ll}
\EE(\T^4)&=8\iint [\EE_\xi(\T^3)]\\
\\
&=8\iint [\EE(\T^3)-\EE(S^3)-3\EE(S)(\EE(\T^2)-\EE(S^2))-3\EE(S^2)(\EE(\T)-\EE(S))\\
&\hspace{0.3cm}+6\EE(S)^2(\EE(\T)-\EE(S))]\\
\\
&=4\EE(\T^3)\m-24\m \iint [\EE(S^2)-2\EE(S)^2]-24\EE(\T^2)\iint [\EE(S)]+8\iint [-\EE(S^3)\\
&\hspace{0.3cm}+6\EE(S^2)\EE(S)-6\EE(S)^3]\\
\\
&=4\EE(\T^3)\m-24\m \iint [\EE(S^2)-2\EE(S)^2]+6\,\hbox{Var}^2-6\m^4\\
&\hspace{0.3cm}+8\iint [-\EE(S^3)+6\EE(S^2)\EE(S)-6\EE(S)^3].
\end{array}
$$
Here, we have used that $4\iint [\EE(S)]=4\iint [\EE(\T)-\EE_\xi(\T)]=2\m^2-\EE(\T^2)$. With these 
relations we can compute the fourth central moment
$$
\begin{array}{l}
\EE((\T-\m)^4)=\EE(\T^4)-4\EE(\T^3)\m+6\EE(\T^2)\m^2-3\m^4\\
=6\, \hbox{Var}^2-24\m \iint [\EE(S^2)-2\EE(S)^2]+8\iint [-\EE(S^3)+6\EE(S^2)\EE(S)-6\EE(S)^3]-6\m^4\\
\hspace{0.3cm}+6\, (2\m^2-4\iint [\EE(S)])\m^2-3\m^4\\
=6\, \hbox{Var}^2-24\m \iint [\EE(S^2)-2\EE(S)^2]-24\m^2\iint [\EE(S)]
+3\m^4+\\
\;\;8\iint [-\EE(S^3)+6\EE(S^2)\EE(S)-6\EE(S)^3].
\end{array}
$$
We define $Q=Q(z)=\EE_\infty((\T_z-\m)^4)-6\,(\hbox{Var}_\infty(\T_z))^2$. Hence,
$$
\begin{array}{l}
Q'=48 \,e^\gamma\!\!\int \!\!e^{-\gamma} \iint [\EE(S^2)-2\EE(S)^2] +24  \,e^\gamma \m \int[\EE(S^2)-2\EE(S)^2]\\
\hspace{0.7cm} + 96 \m \, e^\gamma\!\!\int \!\!e^{-\gamma} \iint [\EE(S)] +24  \,e^\gamma \m^2 \,\int [\EE(S)] 
-24 \m^3 \,e^\gamma\!\!\int \!\!e^{-\gamma}\\
\hspace{0.7cm}-8\,e^\gamma \int [-\EE(S^3)+6\EE(S^2)\EE(S)-6\EE(S)^3].
\end{array}
$$
Once more we compute
%\dot{\wideparen{e^{-\gamma} \dot Q}}
$$
\begin{array}{l}
(e^{-\gamma}Q')'=-48 \,e^{-\gamma} \iint [\EE(S^2)-2\EE(S)^2]-96 \, e^\gamma\!\!\int \!\!e^{-\gamma}\int [\EE(S^2)-2\EE(S)^2]\\
\hspace{1.8cm} - 24 \m e^{-\gamma} (\EE(\T^2)-2\EE(\T)^2)-192  \, e^\gamma (\int \!\!e^{-\gamma})^2 \iint [\EE(S)] -96 \m \,e^{-\gamma} \iint [\EE(S)]\\
\hspace{1.8cm} -192 \m \,e^\gamma\!\!\int \!\!e^{-\gamma} \int [\EE(S)]+144 \m^2 \,e^\gamma (\int \!\!e^{-\gamma})^2 +
8\,e^{-\gamma} (-\EE(\T^3)+6\EE(\T^2)\m-6\m^3).
\end{array}
$$
Let us compute the last term in this expression
$$
\begin{array}{l}
8\,e^{-\gamma}\left(-3\EE(\T^2)\m +12\m
\iint[\EE(S)]+6\iint[\EE(S^2)-2\EE(S)^2] +6\EE(\T^2)\m-6\m^3\right)\\ 
=8\,e^{-\gamma}\bigg(-3\EE(\T^2)\m +3\m(2\m^2-\EE(\T^2))+6\iint[\EE(S^2)-2\EE(S)^2] +6\EE(\T^2)\m-6\m^3\bigg)\\
=48\, e^{-\gamma} \iint[\EE(S^2)-2\EE(S)^2].
\end{array}
$$
Then, we conclude
$$
\begin{array}{l}
Z=\frac{(e^{-\gamma} Q')'}{e^\gamma\!\!\int \!\!e^{-\gamma}}\\
\;\;=-96 \int [\EE(S^2)-2\EE(S)^2]
-192  \int\!\! e^{-\gamma} \iint [\EE(S)]-192 \m
 \int [\EE(S)]+144  \m^2 \int \!\!e^{-\gamma}.
\end{array}
$$
The derivative of $Z$ is given by 
$$
\begin{array}{l}
Z'=96 e^{-\gamma} (\EE(\T^2)-2\EE(\T)^2)+192\, e^{-\gamma} \iint [\EE(S)]\\
\qquad+576\, e^\gamma\!\!\int \!\!e^{-\gamma} \int [\EE(S)]+48\m^2 e^{-\gamma}
-576 \m \, e^\gamma (\int \!\!e^{-\gamma})^2\\
\hspace{0.45cm}=48 \EE(\T^2)  \, e^{-\gamma} -48 \m^2 \, e^{-\gamma} + 576 \,e^\gamma\!\!\int \!\!e^{-\gamma} \int[\EE(S)]
-576 \m\, e^\gamma (\int \!\!e^{-\gamma})^2\\
\hspace{0.45cm}=48\, e^{-\gamma}  \hbox{Var}+  576 \,e^\gamma\!\!\int \!\!e^{-\gamma} \left(\int[\EE(S)]-\m\int e^{-\gamma}\right).
\end{array}
$$
Notice that 
$$
\frac{\int[\EE(S)]-\m\int e^{-\gamma}}{\int e^\gamma (\int \!\!e^{-\gamma})^2} \rightsquigarrow
\frac{-\m e^{-\gamma} +\m e^{-\gamma} +2 e^\gamma (\int \!\!e^{-\gamma})^2}{-e^\gamma (\int \!\!e^{-\gamma})^2}\to -2.
$$
Therefore, we get from Corollary \ref{cor:1} $(iii)$,
\ben
Z' &\rightsquigarrow& 48\, e^{-\gamma}  \hbox{Var}\, -1152 \,\left(e^{\gamma} \int \!\!e^{-\gamma}\right) 
\int e^\gamma \left(\int \!\!e^{-\gamma}\right)^2 \\
&&\rightsquigarrow 48\, e^{-\gamma}  \hbox{Var}\, -1152\, e^{3\gamma} \left(\int \!\!e^{-\gamma}\right)^4.
\een
On the other hand, we know that $(\hbox{Var})^2 \rightsquigarrow 64 \Psi^2$. So to study the asymptotic behavior
of the central fourth moment, it is enough to follow the same steps 
for $64 \Psi^2$. We get
$64 (\Psi^2)'=-128 \Psi \,e^\gamma\!\!\int \!\! e^\gamma (\int \!\!e^{-\gamma})^2$, which gives further that
$$
\begin{array}{l}
64\, (e^{-\gamma} (\Psi^2)')'=128\, e^\gamma( \int \!\! e^\gamma (\int \!\!e^{-\gamma})^2)^2 +
128 \Psi e^\gamma (\int \!\!e^{-\gamma})^2 \rightsquigarrow 128 \Psi e^\gamma (\int \!\!e^{-\gamma})^2.
\end{array}
$$
Call now $G$ this last quantity divided by $e^\gamma\!\!\int \!\!e^{-\gamma}$ and take a further derivative to obtain
$$
\begin{array}{l}
G'=-128 \Psi \, e^{-\gamma} - 128\,  e^\gamma \!\!\int \!\! e^{-\gamma}\, \int \!\! e^\gamma (\int \!\!e^{-\gamma})^2 
\rightsquigarrow -128 \Psi \, e^{-\gamma} \rightsquigarrow - 16\, \hbox{Var} \, e^{-\gamma} .
\end{array}
$$
Putting all these estimations together yield to
%\begin{equation}
%\label{eq:asym4moment}
%\begin{array}{l}
$$\frac{\EE_\infty((\T_z-\m)^4)}{(\hbox{Var}_\infty(\T_z))^2} \rightsquigarrow 3 +9 \frac{e^{4\gamma}\left(\int \!\!e^{-\gamma}\right)^4}{\Psi}
\rightsquigarrow 3 +9 \left(16 \q^4 \int \q^{-3}\right)^{-1}
\rightsquigarrow 3 +9 \frac{\q'}{4\q^2}\to 3,
$$
%\end{array} \end{equation}
which is the fourth moment of a N(0,1) distribution.
\end{proof}
\medskip

\section{Some basic spectral properties}
\label{App:C}

In this section we present the basic spectral properties of the semigroup associated to $X$ in the interval $[z,\infty)$, where
$z\ge 0$. 

\me
When the process is restricted to $[z,\infty)$, the speed measure is 
$$
\mu_z(dx)=2e^{-(\gamma(x)-\gamma(z))}\, dx,\quad x\in [z,\infty).
$$ 
The scale function is  given by  $\Lambda_z(x)=\int_z^x
e^{\gamma(y)-\gamma(z)}\, dy=e^{-\gamma(z)}(\Lambda(x)-\Lambda(z))$.

\me In what follows, we  denote by $\LL $ the second order
differential operator given by 
$$
\LL u=\frac12 u'' (x)-\q(x) u'(x),
$$
for all $u\in \mathcal{C}^2([z,\infty))$. On
  $\mathcal{C}^2_0([z,\infty))$ this operator is symmetric with
    respect to $\mu_z$ and it has a minimal 
closed symmetric extension on $L^2(\mu_z)$, which we shall denote by
$L_z$. This operator is the infinitesimal generator 
of the semigroup associated to the  process  $(X_{t}:\, t\ge 0)$  killed when it attains $z$ (see
\cite{Cattiaux}). 

The main result about the spectral decomposition of $L_z$ is Theorem
3.2 in \cite{Cattiaux}, which we summarize here. 

\begin{theorem}
\label{the:spectral} Assume $\HH_1$ and $\HH_2$ holds. Then, $-L_z$
has purely discrete spectrum $\{\lambda_{i}(z):\, i\ge 1\}$ 
that satisfies
\smallskip
\begin{itemize}
\item[(i)] $0<\lambda_{1}(z)<\lambda_{2}(z)<....$ is an increasing
  sequence of simple eigenvalues of $-L_z$; 
\item[(ii)] For every $\lambda_{i}(z)$ there exists an eigenfunction
  $\psi_{z,i}\in \mathcal{C}^2(z,\infty)$,  
unique up to a multiplicative constant, which also satisfies
$$
\begin{array}{l}
\LL \psi_{z,i}(\bullet)=-\lambda_{i}(z) \psi_{z,i}(\bullet),\, \psi_{z,i}(z)=0\\
\\
\int_z^\infty \psi^2_{z,i}(x)\, 2e^{-(\gamma(x)-\gamma(z))}\, dx=1.
\end{array}
$$
\end{itemize}
\smallskip
The set $\{\psi_{z,i}:\, i\ge 1\}$ forms an orthonormal basis of $L^2(\mu_z)$. 
Moreover, we can choose $\psi_{z,1}$ to be positive in $(z,\infty)$.
\end{theorem}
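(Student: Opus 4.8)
The plan is to recover this statement from classical one-dimensional Sturm--Liouville theory, exactly as in Theorem 3.2 of \cite{Cattiaux}; I sketch the steps. First I would put $L_z$ in Sturm--Liouville form: since
$$
\frac{d}{dx}\!\left(e^{-(\gamma(x)-\gamma(z))}u'(x)\right)=e^{-(\gamma(x)-\gamma(z))}\bigl(u''(x)-2\q(x)u'(x)\bigr)=2e^{-(\gamma(x)-\gamma(z))}\,\LL u(x),
$$
the operator $L_z$ is symmetric on $\mathcal{C}_0^2([z,\infty))$ with respect to $\mu_z$ and is associated with the nonnegative quadratic form
$$
\mathcal{E}(u)=\int_z^\infty \bigl(u'(x)\bigr)^2\,e^{-(\gamma(x)-\gamma(z))}\,dx ,\qquad \mathcal{D}(\mathcal{E})=\bigl\{u\in L^2(\mu_z):u(z)=0,\ \mathcal{E}(u)<\infty\bigr\}.
$$
Since $\q\in\mathcal{C}^1$, the endpoint $z$ is regular and $u(z)=0$ is the Dirichlet (killing) condition there. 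At $\infty$, $\HH_1$ implies (Proposition \ref{pro:0}) that $\infty$ is an entrance boundary, hence inaccessible from the interior, so no boundary condition is needed there; equivalently $\infty$ is in Weyl's limit-point case, because the two solutions of $\LL u=0$ are $u\equiv1$ and $u=\Lambda$, and while the constant lies in $L^2(\mu_z)$ near $\infty$ ($\mu_z$ being finite under $\HH_1$), $\Lambda$ does not: the integration-by-parts estimate of Lemma \ref{tec} $(ii)$, which uses only $\HH_1$ and $\HH_2$, gives $\Lambda(x)\approx e^{\gamma(x)}/\q(x)$, so $\int^\infty\Lambda(x)^2e^{-\gamma(x)}\,dx=\infty$ under $\HH_2$. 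Hence the minimal closed symmetric extension $L_z$ is actually self-adjoint on $L^2(\mu_z)$.

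The main step, and the one I expect to require genuine work, is \emph{discreteness} of the spectrum, which I would obtain by showing that the embedding of $\mathcal{D}(\mathcal{E})$ (with its graph norm) into $L^2(\mu_z)$ is compact; this forces $-L_z$ to have compact resolvent, hence purely discrete spectrum. For $u\in\mathcal{D}(\mathcal{E})$ one has $u(x)=\int_z^x u'$, so Cauchy--Schwarz yields $|u(x)|^2\le \mathcal{E}(u)\,\Lambda_z(x)$; therefore for every $b>z$,
$$
\int_b^\infty |u(x)|^2\,d\mu_z(x)\ \le\ \mathcal{E}(u)\int_b^\infty \Lambda_z(x)\,2e^{-(\gamma(x)-\gamma(z))}\,dx .
$$
Using $\Lambda(x)\approx e^{\gamma(x)}/\q(x)$ again, the integrand on the right is $\approx 1/\q(x)$, which is integrable at $\infty$ by \eqref{convint}; thus the right-hand factor tends to $0$ as $b\to\infty$, uniformly over $\mathcal{E}$-bounded sets. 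Together with the (Rellich) compactness of bounded-energy families restricted to each interval $[z,b]$, a standard truncation/diagonal argument then gives compactness of the full embedding. This is precisely the Maz'ya--Muckenhoupt compactness criterion for the relevant weighted Hardy inequality; alternatively one may invoke it directly from the fact that $\infty$ is an entrance boundary together with the finiteness of $\mu_z$.

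Once $-L_z$ is self-adjoint with compact resolvent, the remaining assertions are standard Sturm--Liouville facts. The spectral theorem gives an orthonormal basis $(\psi_{z,i})_{i\ge1}$ of $L^2(\mu_z)$ of eigenfunctions with eigenvalues $\lambda_i(z)\to\infty$, the normalization $\int_z^\infty\psi_{z,i}^2\,d\mu_z=1$ being just a choice of scale. Each $\psi_{z,i}$ solves $\frac12\psi''=\q\psi'-\lambda_i(z)\psi$ with $\psi(z)=0$, and since $\q\in\mathcal{C}^1$ a short bootstrap gives $\psi_{z,i}\in\mathcal{C}^2([z,\infty))$. Simplicity is immediate, since a solution of this second-order ODE vanishing at the regular point $z$ is determined by $\psi'(z)$ up to a multiplicative constant, so each eigenspace is one-dimensional and the eigenvalues can be listed strictly increasing. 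For $\lambda_1(z)>0$: $\mathcal{E}(u)=0$ forces $u$ constant, hence $u\equiv0$ because $u(z)=0$, so $0$ is not an eigenvalue, and with discreteness and $\mathcal{E}\ge0$ this gives $\lambda_1(z)=\inf\mathrm{spec}(-L_z)>0$. Finally, for positivity of the ground state: if $\psi_{z,1}$ minimizes the Rayleigh quotient then so does $|\psi_{z,1}|$ (as $\mathcal{E}(|u|)\le\mathcal{E}(u)$), so by simplicity $\psi_{z,1}$ has constant sign and may be taken nonnegative; it cannot vanish at an interior point $x_0\in(z,\infty)$, since then $x_0$ would be an interior minimum with $\psi_{z,1}(x_0)=\psi_{z,1}'(x_0)=0$, forcing $\psi_{z,1}\equiv0$ by ODE uniqueness. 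Hence $\psi_{z,1}>0$ on $(z,\infty)$.
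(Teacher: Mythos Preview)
The paper does not actually give its own proof of this theorem: it is stated in Appendix~C as a summary of Theorem~3.2 in \cite{Cattiaux}, with no independent argument. Your sketch is a correct reconstruction of the standard route (Sturm--Liouville form, limit-point at $\infty$, compactness of the form domain embedding via the weighted Hardy estimate, then the usual consequences of compact resolvent), and it matches what one expects the cited proof to contain. One small remark: your verification that $\Lambda\notin L^2(\mu_z)$ near $\infty$ via $\Lambda\approx e^{\gamma}/q$ is fine, but note that the paper later (proof of Proposition~\ref{prop:b2}(ii)) gives a slicker argument, due to Littin, that does not require $\HH_2$: splitting on $\{e^{-\gamma}\Lambda^2>1\}$ gives $M\le\int_x^{x+M}\Lambda^2 e^{-\gamma}+1/\Lambda(x)$ for every $M>0$, hence $\int^\infty\Lambda^2 e^{-\gamma}=\infty$ directly from $\Lambda(\infty)=\infty$.
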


\medskip

The process $(X_{t\wedge \T_z}:\, t\ge 0)$ has a unique q.s.d. in
$[z,\infty)$ (see \cite{Cattiaux} Theorems 5.2 and 7.2),  
which is given by 
$$
\nu_z(dx)=\frac{\psi_{z,1}(x) e^{-\gamma(x)} \, dx}{\int_z^\infty \psi_{z,1}(y) e^{-\gamma(y)}\, dy} \,  \ind_{[z,\infty)}(x).
$$  
The fact there is a gap in the spectrum of $L_z$ implies that  $(X_{t\wedge \T_z}:\, t\ge 0)$ is $R$-positive, which
means that the associated $Q$-process is positive recurrent.

\medskip

For this article we need extra properties of the principal eigenvalue
and eigenfunctions. 

\medskip

\begin{proposition}\label{prop:b2} Under  $\HH_1$ and $\HH_2$ we have
\smallskip
\begin{itemize} 
\item[(i)] if $0\le z< x$ then $\lambda_{1}(z)<\lambda_{1}(x)$, that is the principal eigenvalue is 
a strictly increasing function. Moreover, for all $x\ge 0$
$$ 
\lambda_{1}(x)\ge \frac{(\inf\{q(y):\, y\ge x\}\vee 0)^2}{2}.
$$
In particular, $\lim\limits_{x\to \infty} \lambda_{1}(x)=\infty$.

\item[(ii)] For all $z\ge 0$ the functions $\psi_{z,i},\psi_{z,i}'$ are bounded, for every $i\ge 1$. 
The functions $\psi_{z,i},\psi_{z,i}'$ have exactly $i, i-1$ zeros respectively. $\psi_{z,i}$ is eventually monotone and 
the following limits exist
$$
\lim\limits_{x\to \infty} \psi_{z,i}(x)=\psi_{z,i}(\infty)\neq 0 \hbox{ and }
\lim\limits_{x\to \infty} \psi'_{z,i}(x)=0.
$$
\item[(iii)] For all $z\ge 0$, there exists a constant $A=A(z)>0$ such that $\lambda_{i}(z) \ge A i$ holds for all $i\ge 1$. 
\end{itemize}
\end{proposition}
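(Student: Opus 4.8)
The plan is to treat the three items separately, in each case passing through the Liouville (ground-state) transformation $u\mapsto e^{-\gamma/2}u$, which conjugates $-L_z$ on $L^2(\mu_z)$ to the one-dimensional Schr\"odinger operator $-\tfrac12 u''+Vu$ on $[z,\infty)$ with Dirichlet condition at $z$ and potential $V=\tfrac12(q^2-q')$. Under $\HH_1$ and $\HH_2$ the potential $V$ is continuous, bounded below, and $V(y)\to+\infty$ (since $q\to\infty$ and $q'=o(q^2)$, whence $V\ge\tfrac14 q^2$ for large $y$), which already makes the overall shape of the statements transparent and is the engine of parts (i) and (iii).

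\emph{Part (i).} For strict monotonicity I would use the variational formula $\lambda_1(x)=\inf\big\{\tfrac12\int_x^\infty(u')^2e^{-\gamma}\,\big/\,\int_x^\infty u^2 e^{-\gamma}:u(x)=0\big\}$. For $z<x$, extending $\psi_{x,1}$ by $0$ on $[z,x)$ produces an admissible competitor on $[z,\infty)$ with unchanged Rayleigh quotient, so $\lambda_1(z)\le\lambda_1(x)$; equality would make this extension a minimiser, hence a $\mathcal C^2$ solution of $\LL u=-\lambda_1(z)u$ vanishing together with its derivative on $(z,x)$, so $\equiv0$ by ODE uniqueness --- a contradiction. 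For the lower bound, put $m=\inf\{q(y):y\ge x\}$ and suppose $m>0$ (otherwise there is nothing to show). Coupling $X$ on $[x,\infty)$ with $Y$ solving $dY=dB-m\,dt$, $Y_0=X_0=y$, driven by the same $B$: since $q(X_t)\ge m$ as long as $X_t>x$, the difference $Y_t-X_t$ is non-decreasing up to $\T_x$, hence $\T_x\le\T_x^{Y}$ and $\EE_y(e^{\lambda\T_x})\le\EE_y(e^{\lambda\T_x^{Y}})$, which is finite for every $\lambda\le m^2/2$ by the classical first-passage Laplace transform of a drifted Brownian motion. As $\lambda_1(x)$ is precisely the exponential decay rate of $\PP_y(\T_x>t)$ (see \cite{Cattiaux}), this forces $\lambda_1(x)\ge m^2/2$; finally $\HH_2$ gives $q\to\infty$, hence $\inf\{q(y):y\ge x\}\to\infty$ and $\lambda_1(x)\to\infty$.

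\emph{Part (ii).} I would argue from the first-order form $(e^{-\gamma}\psi_{z,i}')'=-2\lambda_i(z)e^{-\gamma}\psi_{z,i}$. The solution of $\LL u=-\lambda_i(z)u$ independent of $\psi_{z,i}$ grows like $\int^x e^{\gamma}$, which by the same integration by parts as in the proof of Lemma~\ref{tec}(ii) is $\approx e^{\gamma(x)}/q(x)$; hence $\int^\infty(\int^x e^{\gamma})^2 e^{-\gamma}\approx\int^\infty e^{\gamma}q^{-2}=\infty$, so this solution is not in $L^2(\mu_z)$, we are in the limit-point case at $\infty$, and the eigenfunction must be the bounded one --- giving the boundedness of $\psi_{z,i}$. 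Integrating the first-order form from $x$ to $\infty$ and using boundedness to kill the boundary term yields $\psi_{z,i}'(x)=2\lambda_i(z)e^{\gamma(x)}\int_x^\infty e^{-\gamma(\xi)}\psi_{z,i}(\xi)\,d\xi$, whence $|\psi_{z,i}'(x)|\le C q(x)^{-1}\to0$ by Corollary~\ref{cor:1}(i); thus $\psi_{z,i}'$ is bounded with $\psi_{z,i}'(\infty)=0$, and $\int^\infty|\psi_{z,i}'|<\infty$ (as $\int^\infty q^{-1}<\infty$), so $\psi_{z,i}(\infty)$ exists. Beyond its largest zero $\psi_{z,i}$ has constant sign and $e^{-\gamma}\psi_{z,i}'$ is strictly monotone, from which $\psi_{z,i}$ is eventually monotone; were it to decrease to $0$, then $e^{-\gamma}\psi_{z,i}'$ would become negative and stay so, forcing $\psi_{z,i}\to-\infty$, which is absurd --- hence $\psi_{z,i}(\infty)\ne0$. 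The zero counts follow from Sturm--Liouville oscillation theory (approximating $[z,\infty)$ by $[z,T]$ with Dirichlet endpoints), together with the remark that at a critical point of $\psi_{z,i}$ lying in $\{\psi_{z,i}>0\}$ one has $\psi_{z,i}''=-2\lambda_i(z)\psi_{z,i}<0$, so such critical points are strict maxima; consequently $\psi_{z,i}'$ has exactly one zero between consecutive zeros of $\psi_{z,i}$ and none past the last.

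\emph{Part (iii).} In the Schr\"odinger picture I would estimate the counting function $\mathcal N(\Lambda)=\#\{i:\lambda_i(z)\le\Lambda\}$ by the classical one-dimensional bound $\mathcal N(\Lambda)\le 1+C\int_z^\infty(\Lambda-V(y))_+^{1/2}\,dy$ (via Sturm oscillation / the Pr\"ufer transformation). The crucial point --- and the step I expect to be the real obstacle, since $\HH_1$ and $\HH_2$ do \emph{not} force $q$ to grow at least linearly --- is that this integral is nevertheless $O(\Lambda)$: for large $y$ one has $V(y)\ge\tfrac14 q(y)^2$, and on the set $\{q^2\le 4\Lambda\}$ one has $1\le 2\sqrt\Lambda/q$, so $|\{y\ge Y:q(y)^2\le 4\Lambda\}|\le 2\sqrt\Lambda\int_Y^\infty q^{-1}=O(\sqrt\Lambda)$ because $\int_Y^\infty q^{-1}<\infty$ under $\HH_1,\HH_2$; together with the bounded contribution of $[z,Y]$ this gives $\int_z^\infty(\Lambda-V)_+^{1/2}\le\sqrt\Lambda\,|\{V<\Lambda\}|+O(\sqrt\Lambda)=O(\Lambda)$. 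Hence $\mathcal N(\Lambda)\le B\Lambda$ for $\Lambda$ large, i.e. $\lambda_i(z)\ge i/B$ for large $i$; for the finitely many remaining indices $\lambda_i(z)\ge\lambda_1(z)>0$, and decreasing the constant gives $\lambda_i(z)\ge A(z)\,i$ for all $i\ge1$.
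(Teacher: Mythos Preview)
Your treatment of (i) is correct and in one respect cleaner than the paper's: you obtain strict monotonicity of $\lambda_1(\cdot)$ by the Rayleigh quotient / unique continuation argument, whereas the paper uses a Wronskian computation together with a Cauchy--Schwarz trick to reach a contradiction. The coupling with the constant-drift Brownian motion for the lower bound $\lambda_1(x)\ge m^2/2$ is exactly the paper's argument.

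Your (iii) is also correct but genuinely different. The paper simply cites the bound $\lambda_i(z)\ge (i-1)/C_z$ with $C_z=2\int_z^\infty \Lambda(x)e^{-\gamma(x)}dx$ from \cite{Littin}, whereas you produce a self-contained proof via the Schr\"odinger picture and a semiclassical counting estimate. Your key step---using $1\le 2\sqrt{\Lambda}/q$ on $\{q^2\le 4\Lambda\}$ together with $\int^\infty q^{-1}<\infty$ to bound the measure of the classically allowed region by $O(\sqrt{\Lambda})$---is a nice way to extract linear growth of eigenvalues from $\HH_1,\HH_2$ alone, and avoids the external reference.

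Part (ii), however, has a real gap. The sentence ``we are in the limit-point case at $\infty$, and the eigenfunction must be the bounded one'' is a non sequitur: the limit-point alternative only tells you that at most one solution is in $L^2(\mu_z)$, not that any solution is \emph{bounded}. Your preceding claim that ``the solution of $\LL u=-\lambda_i(z)u$ independent of $\psi_{z,i}$ grows like $\int^x e^{\gamma}$'' is circular, since by reduction of order that solution is $\psi_{z,i}(x)\int^x e^{\gamma}\psi_{z,i}^{-2}$, and you would need to already know $\psi_{z,i}$ is bounded away from $0$ and $\infty$ to conclude this growth rate. The Schr\"odinger transformation does not rescue the argument either: the eigenfunction $\phi_i=e^{-\gamma/2}\psi_{z,i}$ of $-\tfrac12\phi''+V\phi$ is indeed bounded (even decaying), but $\psi_{z,i}=e^{\gamma/2}\phi_i$ with $e^{\gamma/2}\to\infty$, so boundedness of $\psi_{z,i}$ requires knowing that $\phi_i$ decays at least like $e^{-\gamma/2}$, which is precisely the asymptotic information you are missing.

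The paper closes this gap by an explicit Levinson-type construction: after verifying the limit-point case (for $\lambda=0$, using $f=\Lambda$), it builds, for each $\lambda=\lambda_i(z)$, a bounded solution $u$ on $[x_0,\infty)$ as a fixed point of a contraction in $\{h\in C:\ h(\infty)=0,\ \|h\|_\infty\le 1\}$, with ansatz $u=e^{S}(1+h)$ where $S(x)=-\lambda\int_x^\infty q^{-1}$. Since $u$ is bounded and $\mu_z$ is finite, $u\in L^2(\mu_z)$; the limit-point property then forces $u$ to be a nonzero multiple of $\psi_{z,i}$, giving boundedness. Everything you wrote downstream of boundedness (the formula for $\psi'_{z,i}$, $\psi'_{z,i}\to 0$, existence and nonvanishing of $\psi_{z,i}(\infty)$, the zero counts) is fine.
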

\smallskip

\begin{proof} $(i)$ It is clear that $\lambda_{1}(z)\le \lambda_{1}(x)$ because for every $y>x$ we have
$$
\lambda_{1}(z)=\lim\limits_{t\to \infty} \frac{-\log \PP_y(\T_z>t)}{t}\le \lim\limits_{t\to \infty} \frac{-\log \PP_y(\T_x>t)}{t}=\lambda_{1}(x).
$$
The first equality in the above expression is well known (see for example Theorem \ref{the:3} or \cite{Cattiaux} Theorem 5.1).
Let us assume that $\lambda_{1}(z)=\lambda_{1}(x)$ and we will arrive to a contradiction. We denote by $\lambda=\lambda_{1}(z)$
and by $g=\psi_{z,1}, f=\psi_{x,1}$. Notice that, even if the eigenvalues are equal, these two functions are not because
$0<g(x), f(x)=0$.

\smallskip

Consider $W$ the Wronskian of these two functions $W=g'f-gf'$. The derivative of $W$ on $(x,\infty)$ is
\ben
W'(y)&=&g'' f-gf''=2(\q(y)g'(y)-\lambda g(y))f(y)-2(\q(y)f'(y)-\lambda f(y))g(y)\\
&=&
2q(y)W(y).
\een
In particular we have $W(y)=W(x) e^{\gamma(y)-\gamma(x)}=-g(x)f'(x)e^{\gamma(y)-\gamma(x)}$. 
Since $f$ is increasing we must have $f'(x)>0$ (otherwise $f\equiv 0$ since it is solution of a second order linear differential equation) and therefore $W(x)<0$.
Using that $(g/f)'=W/f^2$, we get for $y>y_0>x$
\be
0<\frac{g(y)}{f(y)}&=&\frac{g(y_0)}{f(y_0)}+W(x)\!\!\int_{y_0}^y \! \frac{e^{\gamma(w)-\gamma(x)}}{f^2(w)}\, dw \nonumber \\
&=&
\frac{g(y_0)}{f(y_0)}-g(x)f'(x)\!\!\int_{y_0}^y \!\frac{e^{\gamma(w)-\gamma(x)}}{f^2(w)}\, dw. \label{eq:-1}
\ee
The Cauchy-Schwarz inequality shows that
\ben
(y-y_0)^2&=&\left(\int_{y_0}^y \frac{f(w) e^{-\frac12 (\gamma(w)-\gamma(x))}}{f(w) e^{-\frac12 (\gamma(w)-\gamma(x))}}\, dw\right)^2\\
&\le&
 \int_{y_0}^y f(w)^2 e^{-(\gamma(w)-\gamma(x))}\, dw  \int_{y_0}^y \frac{e^{(\gamma(w)-\gamma(x))}}{f^2(w)}\, dw.
\een
Since  $\int_x^\infty f(w)^2 e^{-(\gamma(w)-\gamma(x))}\, dw<\infty$, we conclude that
$$
\int_{y_0}^\infty \frac{e^{(\gamma(w)-\gamma(x))}}{f^2(w)}\, dw=\infty.
$$
This is a contradiction with \eqref{eq:-1} and therefore
$\lambda_{1}(z)<\lambda_{1}(x)$.

We now prove the lower bound for $\lambda_{1}(x)$. Here we use a comparison with a diffusion with constant drift 
and idea developed for example in \cite{Martinez}. We consider
$M=\inf\{q(y):\, y\ge x\}$. 
If $M\le 0$ the result is obvious. So assume that $M>0$. We compare the process
$X$ with the process $Y$ which has constant drift $-M$, that is,
$$
Y_t=Y_0 +B_t-Mt.
$$
If $Y_0=X_0>x$ then we have $X_t\le Y_t$ for all $0\le t \le \T_x(X)$,
showing that 
$\PP_x(\T_x(X)>t)\le \PP_x(\T_x(Y)>t)$. In particular the exponential
rates of absorption at zero 
are comparable
$$
\lambda_{1}(x)\ge \lambda^Y_{1}(x)=\frac{M^2}{2},
$$
showing the desired property and also that $\lim\limits_{x\to \infty}
\lambda_{1}(x)=\infty$.

\medskip

$(ii)$ We first prove that $\LL$ is of limit point type at $\infty$,
that is, for some $\lambda \in \CC$ (equivalently for all $\lambda \in
\CC$)  
the equation $\LL f=-\lambda f$ has a solution which is not in
$L^2(\mu_z)$ near $\infty$, which means for all $x>z$ 
$$
\int_x^\infty f^2(y) e^{-\gamma(y)}\, dy=\infty.
$$
To show this property, it is enough to consider $\lambda=0$ and
$f(y)=\Lambda(y)$. Here we use an argument taken from \cite{Littin}. 
For $M>0$ and $x>z$ we consider
\ben
M&=&\int_x^{M+x} \ind_{e^{-\gamma(y)} \Lambda^2(y)>1}+\ind_{e^{-\gamma(y)} \Lambda^2(y)\le 1} \,dy \\
&\le &\int_x^{M+x} \Lambda^2(y) e^{-\gamma(y)}\, dy+  \int_x^{M+x}
\frac{e^{\gamma(y)}}{\Lambda^2(y)} \, dy\\ 
\\
&=&\int_x^{M+x} \Lambda^2(y) e^{-\gamma(y)}\, dy-\frac{1}{\Lambda(y)}\big|_{x}^{M+x}\le
\int_x^{M+x} \Lambda^2(y) e^{-\gamma(y)}\, dy+\frac{1}{\Lambda(x)}.
\een
This implies that $\int_x^\infty \Lambda^2(y) e^{-\gamma(y)}\,
dy=\infty$, showing the claim. 

\smallskip

We use this fact for $\lambda=\lambda_{i}(z)$. In what follows we shall
prove the existence of  
a nonzero bounded solution to the equation $\LL u=-\lambda u$ on
$[z,\infty)$. Notice that this equation has two 
linearly independent solutions and one of them is chosen to be
$\psi_{z,i}$. The other one can be 
chosen to be the solution of $\LL v=-\lambda v, v(z)=1,
v'(z)=0$. Since $\psi_{z,i}$ is in $L^2(\mu_z)$ near $\infty$ 
then necessarily $v$ cannot be in $L^2(\mu_z)$ near $\infty$. 

\smallskip

On the other hand, $u=a\psi_{z,i}+b v$ for some $a,b \in \RR$, but the
fact that $u$ is bounded implies $u$ is in $L^2(\mu_z)$ near
$\infty$. 
The conclusion is that $b=0$ and therefore $u=a\psi_{z,i}$ for some
$a\neq 0$, which implies that $\psi_{z,i}$ is bounded, showing 
the result. 

\smallskip

So, let us construct this bounded solution $u$. Here we follow the
ideas of \cite{Levinson}.  
Consider a large $x_0>z$ such that for all $x\ge x_0$ the following
properties hold  (these are consequences of our hypotheses $\HH_1,
\HH_2$) 
\begin{itemize}
\item[(1)] $\frac{\lambda+\lambda^2}{\q^2(x)}<\frac14$;
\smallskip

\item[(2)] $(\lambda+1) \left|\frac{\q'(x)}{\q^2(x)} \right|\le \frac14$;
\smallskip

\item[(3)] $\int_x^\infty \frac{1}{\q(y)} \, dy<\frac18$.
\end{itemize}
\smallskip

The function $S(x)=-\lambda \int_x^\infty \frac{1}{\q(y)} \, dy$ is
well defined and satisfies $S'(x)=\frac{\lambda}{\q(x)},
S''(x)=-\frac{\lambda\q'(x)}{\q^2(x)}$ 
and a fortiori we obtain
$$
| S''(x)+S'(x)^2|\le \frac12.
$$
Now, consider the integral $I(x)=\int_x^\infty e^{\gamma(y)-2S(y)}\int_y^\infty e^{-\gamma(\xi)+2S(\xi)}\, d\xi \,dy$. 
The fact that $S$ is bounded near $\infty$ and $\HH_1$ holds, show that $I(x)$ is finite.
We have
$$
I(x)=\int_x^\infty e^{\gamma(y)-2S(y)} \int_y^\infty \frac{-1}{2\q(\xi)-2\lambda/\q(\xi)} d(e^{-\gamma(\xi)+2S(\xi)}) \,dy.
$$
Integrating by parts the second integral we get
$$
\begin{array}{l}
I(x)=\int_x^\infty \frac{1}{2\q(y)-2\lambda/\q(y)}\, dy
+\frac12\int_x^\infty e^{\gamma(y)-2S(y)}\int_y^\infty \frac{\q'(\xi)+\lambda \q'(\xi)/\q^2(\xi)}{(\q(\xi)-\lambda/\q(\xi))^2} e^{-\gamma(\xi)+2S(\xi)}\, d\xi \,dy\\
\\
\le \int_x^\infty \frac{1}{\q(y)}\, dy 
+\frac12\int_x^\infty e^{\gamma(y)-2S(y)}\int_y^\infty \left|\frac{\q'(\xi)}{\q^2(\xi)}\right| 
\frac{1+\lambda/ \q^2(\xi)}{(1-\lambda/\q^2(\xi))^2} e^{-\gamma(\xi)+2S(\xi)}\, d\xi \,dy\\
\\
\le \frac18+ \frac12\int_x^\infty e^{\gamma(y)-2S(y)}\int_y^\infty e^{-\gamma(\xi)+2S(\xi)}\, d\xi \,dy.
\end{array}
$$ 
Thus, $I(x)\le 1/4$.
In the space $\mathcal{B}=\{u\in \mathcal{C}([x_0,\infty)): \, \lim\limits_{x\to \infty} u(x)=0\}$, we define the affine operator
$$
\mathscr{A} h(x)=-\int_x^\infty e^{\gamma(y)-2S(y)}\int_y^\infty [S''(\xi)+S'(\xi)^2][1+h(\xi)] e^{-\gamma(\xi)+2S(\xi)}\, d\xi \,dy.
$$
This operator is a contraction on the unit ball of $\mathcal{B}$. Thus, there exists a unique fixed point 
$\mathscr{A} h=h$ in the unit ball. It is straightforward to show that the function $u(x)=e^{S(x)}(1+h(x))$, for $x\ge x_0$, 
satisfies the equation $\LL u=-\lambda u$ on $[x_0,\infty)$ and it is bounded there. Now, extend this solution to the interval $[z,x_0+1]$ by solving the equation
$$
\LL \tilde u=-\lambda \tilde u, \, \tilde u(x_0+1)=u(x_0+1), \tilde u'(x_0+1)=u'(x_0+1).
$$
Gluing these functions together provides a $\mathcal{C}^2$ bounded solution of $\LL u=-\lambda u$ on $[z,\infty)$ as desired.

\medskip

The fact that each $\psi_{z,i}$ has exactly $i$ zeros is a consequence of the interlacing Cauchy Theorem. 
Recall that $\psi_{z,1}$ vanishes only at $x=z$.
Now, we show that each $\psi_{z,i}$ has a limit at infinity. We denote by $\lambda=\lambda_{i}(z)$ and
$\psi=\psi_{z,i}$. We also take $z=0$ to keep notation simple. Notice that
$(\psi'e^{-\gamma})'=-2\lambda \psi$, which implies that
$$
\psi(x)=\int_0^x e^{\gamma(y)} \big(\psi'(0)-2\lambda\int_0^y e^{-\gamma(\xi)}\psi(\xi) \,d\xi\big)\,dy.
$$
If $\psi'(0)\neq 2\lambda\int_0^\infty e^{-\gamma(\xi)} \psi(\xi) \,d\xi$ then $|\psi|$ grows like $\Lambda$ at infinity and therefore
$\psi$ is not in $L^2(d\mu)$, which is a contradiction. Therefore, we conclude
\begin{equation}
\begin{array}{l}
\psi(x)=2\lambda \int_0^x e^{\gamma(y)} \int_y^\infty e^{-\gamma(\xi)} \psi(\xi)\,d\xi\,dy, \label{psik}\\
\\
\psi'(x)=2\lambda \, e^{\gamma(x)}\int_x^\infty e^{-\gamma(\xi)} \psi(\xi)\,d\xi, \qquad 
\psi'(0)=2\lambda \int_0^\infty e^{-\gamma(\xi)} \psi(\xi)\,d\xi.
\end{array}
\end{equation}
For large values of $\xi$ the function $\psi$ has constant sign (recall that $\psi$ has a finite number of zeros). Then,
changing $\psi$ to $-\psi$, if necessary, we can assume that $\psi$ is eventually positive. Hence, $\psi'$ is eventually
positive and  $\psi$ is eventually increasing. Since it is bounded, we conclude that $\psi(\infty)>0$ exists.

That $\psi'$ converges to $0$ at infinity follows from Corollary \ref{cor:1} (i) and the fact that $\psi$ is bounded.
 On the other hand, if $\psi'(x)=0$ then either $\psi''(x)=-2\lambda \psi(x)< 0$ and $x$ is a local maximum for $\psi$ and $\psi(x)>0$ or
 $\psi''(x)=-2\lambda \psi(x)>0$ and $x$ is a local minimum for $\psi$ and $\psi(x)<0$. The case $\psi(x)=0$ is ruled out because
 in this case we would have $\psi\equiv 0$. The conclusion
 is that between two consecutive zeros of $\psi'$ there exists a unique zero of $\psi$. This together with the fact that $\psi$
 has $i$ zeros and it is eventually monotone, show that $\psi'$ has exactly $i-1$ zeros.

$(iii)$ The lower bound
for the eigenvalues is taken from \cite{Littin}, where it is shown that
$$
\lambda_{i}(z)\ge \frac{i-1}{C_z},
$$
with $C_z=2\int_z^\infty \Lambda(x) e^{-(\gamma(x)-\gamma(z))} \,dz=2\int_z^\infty \Lambda(x) e^{-\gamma(x)} \,dx$.
Using that $\lambda_{1}(z)>0$ and replacing $i-1$ by $i$ for $i\ge 2$, we conclude 
$$
\lambda_{i}(z)\ge A i,
$$
for some positive constant $A=A(z)$.
\noindent
\end{proof}
%%%%%%%%%%%%%%%%%%%%%%%
%%%%%%%%%%%%%%%%%%%%%%%%%

\bigskip Recall that $\lambda_k=\lambda_{k}(0)$ and $\psi_k=\psi_{0,k}$ for any integer $k\ge 1$. The estimation we need is given in the next result.

\medskip

\begin{proposition} 
\label{pro:bound4r}
Assume that $q$ satisfies $\HH_1,\HH_2, \HH_3$. Then, for every $\epsilon>0$ there exists
a finite constant $A=A(\epsilon)$ such that, for all $k\ge 1$
$$
\|\psi_k\|_\infty\le A e^{\epsilon \lambda_k}.
$$
\end{proposition}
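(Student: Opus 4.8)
The plan is to deduce the bound from a smoothing (ultracontractivity) estimate for the killed semigroup. Write $P^{(0)}_t=e^{tL_0}$ for the semigroup of the diffusion killed at $0$, acting on $L^2(\mu)$ with $d\mu=2e^{-\gamma}\,dx$, and recall $\PP_x(X_t\in dy,\T_0>t)=r(t,x,y)\,d\mu(y)$ with $r(t,x,y)=\sum_k e^{-\lambda_k t}\psi_k(x)\psi_k(y)$. Since $L_0\psi_k=-\lambda_k\psi_k$, we have $\psi_k=e^{\lambda_k t}P^{(0)}_t\psi_k$, hence
$$
\|\psi_k\|_\infty\le e^{\lambda_k t}\,\|P^{(0)}_t\|_{L^2(\mu)\to L^\infty}\,\|\psi_k\|_{L^2(\mu)}=e^{\lambda_k t}\,\|P^{(0)}_t\|_{L^2(\mu)\to L^\infty}.
$$
Because $P^{(0)}_t f(x)=\langle r(t,x,\cdot),f\rangle_{L^2(\mu)}$, Parseval gives $\|P^{(0)}_t\|_{L^2(\mu)\to L^\infty}^2=\sup_{x}\int r(t,x,y)^2\,d\mu(y)=\sup_x r(2t,x,x)$. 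Thus, taking $t=\epsilon/2$, it suffices to prove
$$
K(t):=\sup_{x\ge0}r(t,x,x)<\infty\qquad\text{for every }t>0,
$$
for then $\|\psi_k\|_\infty\le \sqrt{K(\epsilon)}\,e^{\epsilon\lambda_k/2}$, which is slightly stronger than the claim.

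To estimate $r(t,x,x)$, note $r(t,x,x)=\tfrac12 e^{\gamma(x)}\,p(t,x,x)$, where $p(t,x,y)$ is the Lebesgue transition density of the killed diffusion, and $p(t,x,x)\le\widetilde p(t,x,x)$, the density of the diffusion $dX=dB-q(X)\,dt$ without killing. Removing the stochastic integral by It\^o's formula (with $Q'=q$, so $Q=\gamma/2$), Girsanov's theorem gives
$$
\widetilde p(t,x,y)=\frac{e^{-(y-x)^2/2t}}{\sqrt{2\pi t}}\;e^{(\gamma(x)-\gamma(y))/2}\;\EE^{\,x\to y}_{0,t}\Big[\exp\Big(\tfrac12\!\int_0^t\!\big(q'(\beta_s)-q(\beta_s)^2\big)\,ds\Big)\Big],
$$
the expectation being over the Brownian bridge $\beta$ from $x$ to $y$ on $[0,t]$. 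By $\HH_2$, $q'\le\tfrac12 q^2$ outside a compact set, so the exponent is at most $Ct-\tfrac14\int_0^t q(\beta_s)^2\,ds$ up to a bounded boundary term. The crucial step is then to use $\HH_3$ — which forbids deep valleys — to obtain, uniformly in the starting point $x$ (and in $y$ near $x$), a lower bound $\int_0^t q(\beta_s)^2\,ds\ge c\,q(x)^2\,t$ on an event of overwhelming probability, the complementary contribution being controlled by the Gaussian tail of the bridge. This yields $\widetilde p(t,x,y)\le C(t)\,e^{(\gamma(x)-\gamma(y))/2}\,e^{-c\,q(x)^2 t}$, hence $r(t,x,x)\le C'(t)\,e^{\gamma(x)-c\,q(x)^2 t}$.

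To conclude that this last quantity is bounded in $x$, I will use that $\HH_1,\HH_2,\HH_3$ force $q(x)/x\to\infty$: otherwise there are $C>0$ and $x_n\to\infty$ with $x_{n+1}>2x_n$ and $q(x_n)\le Cx_n$; by $\HH_3$, $q(y)\le q(x_n)/a\le Cx_n/a$ for $y\in[x_n/2,x_n]$, so $\int_{x_n/2}^{x_n}q(y)^{-1}\,dy\ge a/(2C)$ for each $n$, contradicting $\int_{z}^\infty q^{-1}<\infty$ (valid for large $z$ by \eqref{convint}). Then, using $\HH_3$ once more ($q(\xi)\le q(x)/a$ for $\xi\le x$ large), $\gamma(x)=2\int_0^x q\le O(1)+2xq(x)/a=o(q(x)^2)$, so $e^{\gamma(x)-c\,q(x)^2t}$ is bounded, giving $K(t)<\infty$ and completing the proof. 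The main obstacle is the uniform-in-$x$ lower bound on $\int_0^t q(\beta_s)^2\,ds$ in the previous paragraph: this is exactly where $\HH_3$ is needed, since one must prevent $q$ from being small along the bridge near its starting point (and one also has to treat separately the easy region of bounded $x$).
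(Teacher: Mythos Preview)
Your reduction to ultracontractivity is correct and is a genuinely different route from the paper: the statement is \emph{equivalent} to $K(t)=\sup_x r(t,x,x)<\infty$ for every $t>0$, and your identity $\|\psi_k\|_\infty\le e^{\lambda_k t}\|P^{(0)}_t\|_{L^2\to L^\infty}$ together with $\|P^{(0)}_t\|_{L^2\to L^\infty}^2=\sup_x r(2t,x,x)$ is exactly the right reformulation. Your final paragraph (showing $q(x)/x\to\infty$ and hence $\gamma(x)=o(q(x)^2)$ via $\HH_3$) is also correct. For comparison, the paper never touches the heat kernel: it works directly on the ODE, first proving the crude bound $|\psi_k(x)|\le F\lambda_k e^{\gamma(x)}$ from the integral representation, then splitting at a point $\chi_k$ with $q(\chi_k)\asymp\sqrt{\lambda_k}$; on $[0,\chi_k]$ it uses $\HH_3$ in the form $q(z)\le q(\chi_k)/a$ to bound $\gamma(\chi_k)$ by $C\lambda_k\int_{x_1}^\infty q^{-1}$, and on $[\chi_k,\infty)$ it runs a Riccati-type argument on $R=\psi_k'/\psi_k$ showing $R(y)\le\sqrt{H}\lambda_k/q(y)$.

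The gap is your ``crucial step'', and your suggested way to fill it does not work for fast-growing $q$. Take $q(x)=e^x$, which satisfies $\HH_1,\HH_2,\HH_3$. Writing the bridge as $\beta_s=x+W^{\mathrm{br}}_s$ with $W^{\mathrm{br}}$ a standard bridge from $0$ to $0$, one has $\int_0^t q(\beta_s)^2\,ds=e^{2x}Z$ with $Z=\int_0^t e^{2W^{\mathrm{br}}_s}\,ds$ a positive random variable whose law \emph{does not depend on $x$}. Your good event $\{\int_0^t q(\beta_s)^2\,ds\ge c\,q(x)^2 t\}$ is simply $\{Z\ge ct\}$, so the bad event has a fixed probability $p(c,t)>0$; on it the Feynman--Kac integrand is only bounded by a constant, and after multiplying by $\tfrac12 e^{\gamma(x)}\sim\tfrac12 e^{2e^x}$ the bad-event contribution diverges. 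No ``Gaussian tail of the bridge'' can repair this, because the complement must have probability $\lesssim e^{-\gamma(x)}=e^{-2e^x}$, far beyond any Gaussian decay in $x$. What would actually be needed is a sharp small-ball estimate on $Z$ (equivalently, large-argument asymptotics of its Laplace transform) adapted to the growth of $q$; this is substantially more delicate than what you wrote and you have not provided it. Note also a structural mismatch: $\HH_3$ gives $q(\beta_s)\ge a\,q(x)$ only for $\beta_s\ge x$; for $\beta_s<x$ it yields the \emph{upper} bound $q(\beta_s)\le q(x)/a$. This is why the paper's ODE argument---which needs upper bounds on $q$ to control $\gamma$---goes through, while your bridge argument---which needs lower bounds on $q(\beta_s)$---stalls.
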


\begin{proof} We recall that $(\psi_k)_k$ and $(\lambda_k)$ satisfy the following basic properties:
Each $\psi_k$ is a bounded continuous function with $k$ zeros,  $\psi_k$ is eventually increasing with a finite positive 
limit at infinity  $\psi_k(\infty)>0$, $\psi'_k$ is a bounded function, eventually positive with zero limit at infinity and 
$$
\begin{array}{l}
\psi_k''-2q\psi_k'=-2\lambda_k \psi_k,\, \psi_k(0)=0,\, \|\psi_k\|_{L^2}=1,\\
\\
\psi_k'(x)=2\lambda_k e^{\gamma(x)}\int_x^\infty e^{-\gamma(\xi)} \psi_{k}(\xi)\, d\xi,\, 
\psi_k(x)=2\lambda_k \int_0^x e^{\gamma(y)}\int_y^\infty e^{-\gamma(\xi)} \psi_{k}(\xi)\, d\xi\, dy.
\end{array}
$$
Also for some finite constant $B>0$ it holds $\lambda_k\ge B k$.
We also recall the function $\m(z)=\EE_\infty(\T_z)=2\int_0^\infty e^{\gamma(y)}\int_y^\infty e^{-\gamma(\xi)} d\xi\, dy$,
which is a continuous decreasing function that satisfies $\m(\infty)=0$ and
$$
\lambda_{1}(z)\m(z)>1.
$$
The first important thing to introduce is $\zeta_k$, the largest zero of $\psi_k$. Notice that $\zeta_1=0$. The function
$\psi_k$ is positive in $(\zeta_k,\infty)$ and satisfies
$\LL \psi=-\lambda_k \psi, \ \psi(\zeta_k)=0$ and 
$$
\int_{\zeta_k}^\infty 2e^{-(\gamma(x)-\gamma(\zeta_k))}\psi_k^2(x)\, dx\le
e^{\gamma(\zeta_k)} \int_0^\infty  2e^{-\gamma(x)}\psi_k^2(x)\,dx<\infty.
$$
The conclusion is that $\psi_k$ is proportional to $\psi_{\zeta_k,1}$ and $\lambda_k=\lambda_{1}(\zeta_k)$, that
is $\lambda_k$ is the exponential absorption rate for $X$ in $[\zeta_k,\infty)$. One important conclusion is
\begin{equation}
\label{eq:7.5}
\lambda_k \m(\zeta_k)>1.
\end{equation}
One can see that $(\zeta_k)_k$ is an increasing sequence. Let us show it converges to infinity. Indeed, if
this sequence is bounded, let us say by $z>0$, then we will obtain that $\lambda_k\le \lambda_{1}(z)<\infty$.
This is not possible, because $(\lambda_k)_k$ is the spectrum of the unbounded operator $\LL$.

\medskip

Now, we produce our first crude estimation on $\psi_k$. Notice that $q$ is eventually positive, so
$\gamma$ is eventually increasing. This gives, for some finite constant $C$ independent of $k,x$
\ben
|\psi_k(x)|&\le &C\lambda_k e^{\gamma(x)} \int_0^x \int_y^\infty e^{-\gamma(\xi)} |\psi_k(\xi)|\, d\xi dy\\
&\le &C\lambda_k e^{\gamma(x)} \int_0^x \left(\int_y^\infty e^{-\gamma(\xi)} \psi^2_k(\xi)\, d\xi\right)^{1/2}
\left(\int_y^\infty e^{-\gamma(\xi)}\, d\xi\right)^{1/2} dy\\
&\le& C \lambda_k e^{\gamma(x)} \int_0^x \left(\int_y^\infty e^{-\gamma(\xi)}\, d\xi\right)^{1/2} dy \\
&= &
C \lambda_k e^{\gamma(x)} \int_0^x e^{-\gamma(y)/2} \left(e^{\gamma(y)} \int_y^\infty e^{-\gamma(\xi)}\, d\xi\right)^{1/2} dy.
\een
Corollary \ref{cor:1} gives the asymptotic $e^{\gamma(y)} \int_y^\infty e^{-\gamma(\xi)}\, d\xi
\approx \frac{1}{2q(y)}$. Therefore
for some finite constants $D,E$ we have, for all $y\ge 0$
\begin{equation}
\label{eq:8}
e^{\gamma(y)} \int_y^\infty e^{-\gamma(\xi)}\, d\xi\le D \frac{1}{q(y)+E}.
\end{equation}
Back to our bound for $\psi_k$, we obtain
\ben
|\psi_k(x)|&\le &CD^{1/2} \lambda_k e^{\gamma(x)} \int_0^x \frac{e^{-\gamma(y)/2}}{\sqrt{q(y)+E}} dy \\
&\le& CD^{1/2} \lambda_k e^{\gamma(x)} \left(\int_0^\infty e^{-\gamma(y)}dy \int_0^\infty\frac{1}{q(y)+E} dy\right)^{1/2}.
\een
So far, we have for some finite $F$  the following bound holds
\begin{equation}
\label{eq:9}
|\psi_k(x)|\le F \lambda_k e^{\gamma(x)}.
\end{equation}

The next step is to use hypothesis $\HH_3$ to get the result. For that purpose we take a large constant $H$ (for the moment larger than 1), 
that will depend on $a$ given in hypothesis $\HH_3$ and that we will make explicit later. We now choose $x_1=x_1(a)\ge x_0$ such that the
following conditions hold 
\begin{align}
&\label{eq:10.1} q(x_1)\ge \max\{q(z): z\le x_1\}\vee 1;\\ 
&\label{eq:10.3} S(x_1):=\left(\frac{2H}{a^2}+\sqrt{H}\right) \int_{x_1}^\infty \frac{1}{q(y)} dy \le \frac{\epsilon}{3}.
\end{align}
For example $x_1=\inf\{x\ge x_0: q(x)\ge \max\{q(z):\, z\le x_0\}\vee 1+1, S(x)\le \frac{\epsilon}{3}\}$ will work. 
The next step is to find   $\chi_k$ such that $\zeta_k< \chi_k$ and
$$
q(\chi_k)=\sqrt{H \lambda_k} \,.
$$
For that purpose, we notice that for $x\ge x_1$ we have 
$$
\underline\lambda(x)\ge \frac12\left(\inf\{q(z):\, z\ge x\}\right)^2 \ge \frac{a^2}{2} q^2(x).
$$
In particular, $\sqrt{\lambda_k}\ge \frac{a}{\sqrt{2}} q(\zeta_k)$. So, if $H>\frac{2}{a^2}$ we deduce
that
$$
\chi_k:=\inf\{x>\zeta_k:\, q(x)\ge \sqrt{H \lambda_k}\} >\zeta_k.
$$

Now, we estimate $\psi_k(y)$ for $0\le y\le \chi_k$. This will done under the assumption that
$x_1<\chi_k$. Notice that since $\zeta_k<\chi_k$, we have $x_1<\chi_k$ for all large $k\ge k_0$.
According to our basic estimation we need to bound $\gamma$
in this interval. For $x_1\le y \le \chi_k$, we have
$$
\gamma(y)= \gamma(x_1)+\int_{x_1}^{y} 2q(z)dz\le 
\gamma(x_1)+\int_{x_1}^{\chi_k} 2q(z)dz=
\gamma(x_1)+\int_{x_1}^{\chi_k} 2\frac{q^2(z)}{q(z)}dz.
$$
Using  $\HH_3$ if $x_1\le z \le \chi_k$, we have $q(\chi_k)\ge \inf\{q(x);\, x\ge z\}\ge a q(z)$ and therefore
$$
\gamma(y)\le \gamma(x_1)+\frac{2}{a^2}q^2(\chi_k)\int_{x_1}^\infty \frac{1}{q(y)} dy=
 \gamma(x_1)+\frac{2}{a^2}H\lambda_k \int_{x_1}^\infty \frac{1}{q(y)} dy.
$$
This gives the desired bound for all $k\ge k_0$ and $0\le y\le  \chi_k$
$$
|\psi_k(y)|\le Fe^{\max\{\gamma(z):\, z\le x_1\}} \lambda_k \exp\left(\frac{\epsilon}{3} \lambda_k\right).
$$

The final step is to estimate $\psi_k(y)$ on the interval $[\chi_k,\infty)$. This is done by estimating the ratio
$R=\psi_k'/\psi_k$. Assume there exists $y$ in this interval one has  $R(y)>\sqrt{H}\lambda_k/q(y)$. Since $R(z)$ converges to $0$
as $z\to \infty$, we can find a first value $y<z$ such that
$$
R(z)=\frac{\sqrt{H}\, \lambda_k }{q(y)}.
$$
Obviously, we have $R'(z)\le 0$. On the other hand
\ben
0\ge R'(z)\psi_k^2(z)&=&\psi_k''(z)\psi_k(z)-(\psi_k'(z))^2\\
&=&(2q(z)\psi_k'(z)-2\lambda_k \psi_k(z))\psi_k(z)-(\psi_k'(z))^2\\
&=&2q(z)\frac{\sqrt{H}\, \lambda_k}{q(y)}\psi_k^2(z)-2\lambda_k \psi_k^2(z)-(\frac{\sqrt{H}\,\lambda_k}{q(y)})^2\psi_k^2(z).
\een
Since $\psi_k(z)>0$, we get
$$
2q(z)\frac{\sqrt{H}\, \lambda_k}{q(y)}\le 2\lambda_k +\left(\frac{\sqrt{H}\,\lambda_k}{q(y)}\right)^2.
$$
Recall that $q(y)\ge a q(\chi_k)$ and therefore $\left(\frac{\sqrt{H}\, \lambda_k}{q(y)}\right)^2\le \frac{1}{a^2} \lambda_k$.
In other words, we get the inequality
$$
aq(y)\le q(z)\le \frac{(2+1/a^2)}{2\sqrt{H}} q(y)
$$
which is not possible if $H> H_0=:\left(\frac{2+1/a^2}{2a}\right)^2>\frac{2}{a^2}$. Hence,
$$
\frac{\psi_k'(y)}{\psi_k(y)}\le \frac{\sqrt{H}\,\lambda_k}{q(y)},
$$
for all $y\ge \chi_k$ and a fortiori 
\ben
\psi_k(y)&=& \psi_k(\chi_k)\exp\left(\int_{\chi_k}^y R(z) dz\right) \\
&& \le \psi_k(\chi_k) \exp\left(\sqrt{H}\, \lambda_k \int_{x_1}^\infty \frac{1}{q(z)}dz\right)\le \psi_k(\chi_k) \exp\left(\frac{\epsilon}{3}\lambda_k\right).
\een
Putting these pieces together, we get for all $k\ge k_0$ and all $y\in [0,\infty]$,
$$
|\psi_k(y)|\le Fe^{\max\{\gamma(z):\, z\le x_1\}} \lambda_k \exp\left(\frac{2\epsilon}{3} \lambda_k\right).
$$
Since $x\le e^{x}$, we may change this inequality to
$$
|\psi_k(y)|\le \frac{3}{\epsilon} Fe^{\max\{\gamma(z):\, z\le x_1\}} \exp\left(\epsilon \lambda_k\right).
$$
To include the finite number of functions $\psi_k: k\le k_0$ we define
$$
A=\max\{\|\psi_k\|_\infty:\, k\le k_0\}\vee  \frac{3}{\epsilon} Fe^{\max\{\gamma(z):\, z\le x_1\}},
$$
to get the desired bound, for all $k\ge 1$, $
\|\psi_k\|_\infty \le A \exp\left(\epsilon \lambda_k\right).$
\end{proof}

\begin{corollary} 
\label{cor:2}
Assume that $q$ satisfies $\HH_1,\HH_2, \HH_3$. There are finite finite constants $A,B$ such that
for all $k\ge 1, x\ge 0$
$$
|\psi_k'(x)|\le A\|\psi_k\|_\infty \lambda_k \frac{1}{q(x)+B}.
$$
In particular, for all $w\ge z\ge 0$ (including $w=\infty$)
$$
\sup_{w\ge x\vee y\ge  x\wedge y \ge z} |\psi_k(x)-\psi_k(y)|\le A\|\psi_k\|_\infty \lambda_k \int_z^w \frac{1}{q(u)+B} du.
$$
Finally, for all $\epsilon>0$ there exist $C=C(\epsilon), z_0=z_0(\epsilon)$ such that, for all $z\ge z_0, k\ge 1$
$$
\sup_{x\wedge y\ge z} |\psi_k(x)-\psi_k(y)|\le C\,  e^{\epsilon \lambda_k}.
$$

\end{corollary}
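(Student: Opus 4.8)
The plan is to deduce all three assertions from a single pointwise estimate on $\psi_k'$, the genuine work having already been done in Proposition \ref{pro:bound4r}. First I would start from the integral representation $\psi_k'(x)=2\lambda_k\,e^{\gamma(x)}\int_x^\infty e^{-\gamma(\xi)}\psi_k(\xi)\,d\xi$ established in Proposition \ref{prop:b2}$(ii)$, bound $|\psi_k(\xi)|\le\|\psi_k\|_\infty$ inside the integral, and then invoke Corollary \ref{cor:1}$(i)$ — which gives $2q(z)e^{\gamma(z)}\int_z^\infty e^{-\gamma(\xi)}\,d\xi\to 1$ — together with the continuity of $z\mapsto e^{\gamma(z)}\int_z^\infty e^{-\gamma(\xi)}\,d\xi$ on compacts and $q(z)\to\infty$, to produce finite constants $D,B$ with $e^{\gamma(x)}\int_x^\infty e^{-\gamma(\xi)}\,d\xi\le D/(q(x)+B)$ for every $x\ge 0$. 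Taking $A=2D$ yields the first displayed bound.

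The oscillation bound then follows immediately from the fundamental theorem of calculus: for $z\le x\wedge y$ and $x\vee y\le w$ (say $x\le y$) one writes $|\psi_k(x)-\psi_k(y)|\le\int_x^y|\psi_k'(u)|\,du$, substitutes the pointwise bound, and enlarges the integration interval to $[z,w]$; the case $w=\infty$ is covered by monotone convergence since $\int_z^\infty (q(u)+B)^{-1}\,du<\infty$ for $z$ large by \eqref{convint}.

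For the last estimate I would feed Proposition \ref{pro:bound4r} into the oscillation bound with $w=\infty$: given $\epsilon>0$, that proposition furnishes $A'=A'(\epsilon/2)$ with $\|\psi_k\|_\infty\le A'e^{(\epsilon/2)\lambda_k}$, and the elementary inequality $\lambda_k\le(2/\epsilon)\,e^{(\epsilon/2)\lambda_k}$ absorbs the leftover factor $\lambda_k$, so that $\sup_{x\wedge y\ge z}|\psi_k(x)-\psi_k(y)|\le(2AA'/\epsilon)\,e^{\epsilon\lambda_k}\int_z^\infty(q(u)+B)^{-1}\,du$. Since that integral is finite for large $z$ and nonincreasing in $z$ by \eqref{convint}, one fixes $z_0=z_0(\epsilon)$ large enough that $q>0$ on $[z_0,\infty)$ and sets $C(\epsilon)=(2AA'/\epsilon)\int_{z_0}^\infty(q(u)+B)^{-1}\,du$, which gives the claim for all $z\ge z_0$ and $k\ge 1$.

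I do not expect any real obstacle: the argument is short once Proposition \ref{pro:bound4r} is available, which is the one hard ingredient and is already proved. The only mild care is the bookkeeping in the first step — making $e^{\gamma(x)}\int_x^\infty e^{-\gamma(\xi)}\,d\xi\le D/(q(x)+B)$ valid uniformly down to $x=0$, where the left side is merely bounded rather than comparable to $1/q$ — which is handled by choosing $B$ large enough that $q+B$ is bounded away from $0$ and $D$ large enough to cover the compact part.
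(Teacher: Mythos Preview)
Your proposal is correct and follows essentially the same route as the paper: bound $|\psi_k'(x)|$ via the integral representation $\psi_k'(x)=2\lambda_k e^{\gamma(x)}\int_x^\infty e^{-\gamma(\xi)}\psi_k(\xi)\,d\xi$ and the estimate $e^{\gamma(x)}\int_x^\infty e^{-\gamma(\xi)}\,d\xi\le D/(q(x)+B)$ (which is exactly \eqref{eq:8} in the proof of Proposition~\ref{pro:bound4r}), then integrate and feed in Proposition~\ref{pro:bound4r}. You actually spell out the $\epsilon/2$ bookkeeping for the third claim more carefully than the paper, which just says ``the rest is a consequence of the previous proposition and the integrability of $1/q$ near infinity.''
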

\begin{proof} The result follows from the fact
$$
|\psi'_k(x)|\le 2 \|\psi_k\|_\infty e^{\gamma(x)}\int_x^\infty e^{-\gamma(y)} \, dy\le
 A\|\psi_k\|_\infty \lambda_k \frac{1}{q(x)+B},
 $$
 for some finite constants $A,B$. The rest of the proof is a consequence of 
 the previous proposition and the integrability of $1/q$ near infinity.
\end{proof}

\section*{Acknowledgments}

This work was partially funded by the Chair "Mod\'elisation Math\'ematique et Biodiversit\'e" of 
VEOLIA-Ecole Polytechni\-que-MnHn-FX and by the  CMM Basal project PBF-03 .

\end{document}